\documentclass[11pt]{amsart}
\usepackage[T1]{fontenc}
\usepackage{lmodern,microtype,mathtools,amssymb,mathrsfs}
\usepackage{booktabs,array,enumitem,needspace}
\usepackage[margin=1.10in]{geometry}
\usepackage[hidelinks]{hyperref}
\hypersetup{pdftitle={Tame Discrete Sets on Affine Semisimple Homogeneous Spaces},pdfauthor={Alexander Dvorsky},pdfsubject={rrhom v1r}}
\allowdisplaybreaks[1]
\newcommand{\C}{\mathbb C}
\newcommand{\N}{\mathbb N}

\newcommand{\OO}{\mathscr O}
\newcommand{\Aut}{\operatorname{Aut}}
\newcommand{\Autfl}{\operatorname{Aut}_{\mathrm{fl}}}
\newcommand{\Autvfl}{\operatorname{Aut}_{\omega,\mathrm{fl}}}
\newcommand{\salg}{\mathfrak s}
\newcommand{\Lie}{\operatorname{Lie}}
\newcommand{\Ad}{\operatorname{Ad}}
\newcommand{\Span}{\operatorname{span}}
\newcommand{\diag}{\operatorname{diag}}

\newcommand{\rk}{\operatorname{rank}}
\newcommand{\SL}{\mathrm{SL}}
\newcommand{\GL}{\mathrm{GL}}
\newcommand{\SO}{\mathrm{SO}}
\newcommand{\Ort}{\mathrm O}
\newcommand{\Sp}{\mathrm{Sp}}
\newcommand{\Spin}{\mathrm{Spin}}
\newcommand{\SU}{\mathrm{SU}}
\newcommand{\Sym}{\operatorname{Sym}}
\newcommand{\Pf}{\operatorname{Pf}}
\newcommand{\id}{\operatorname{id}}
\newcommand{\g}{\mathfrak g}
\newcommand{\h}{\mathfrak h}
\newcommand{\tlie}{\mathfrak t}
\newcommand{\ulie}{\mathfrak u}
\newcommand{\cT}{\mathcal T}
\newcommand{\diver}{\operatorname{div}}
\newtheorem{theorem}{Theorem}[section]
\newtheorem{maintheorem}{Theorem}

\newtheorem{proposition}[theorem]{Proposition}
\newtheorem{lemma}[theorem]{Lemma}
\newtheorem{corollary}[theorem]{Corollary}
\theoremstyle{definition}
\newtheorem{definition}[theorem]{Definition}

\theoremstyle{remark}
\newtheorem{remark}[theorem]{Remark}
\numberwithin{equation}{section}
\title[Tame sets on semisimple homogeneous spaces]{Tame Discrete Sets on Affine Semisimple Homogeneous Spaces}
\author{Alexander Dvorsky}
\address{Department of Mathematics, University of Miami, Coral Gables, Florida 33146, USA}
\email{dvorsky@math.miami.edu}
\subjclass[2020]{Primary 32M05; Secondary 14M17, 14L30, 32Q56}
\keywords{Tame discrete set, semisimple homogeneous space, Rosay--Rudin space, complete holomorphic flow, invariant volume form}
\date{October 4, 2026}
\begin{document}
\begin{abstract}
Winkelmann conjectures that every smooth flexible complex affine variety of dimension at least two is a Rosay--Rudin space. We verify this for every positive-dimensional quotient $G/H$ with $G$ a connected complex semisimple algebraic group and $H$ a closed connected reductive subgroup. Weak and strong tameness coincide, every injection between tame discrete sets extends to a holomorphic automorphism, and complements of tame, finite, or empty sets are Oka. Every sufficiently sparse enumerated sequence can be sent to a fixed sequence by a composition of $n$ complete holomorphic flow maps, with $n\le6$; injective self-maps of the fixed sequence admit such a realization with $n\le2$. These flows preserve an invariant algebraic volume form. The proof uses entire interpolation and regular functions invariant under two solvable subgroups to construct the automorphisms. If $G$ is simple of rank at least two and $G/H$ is spherical and admits a closed equivariant embedding into an irreducible module, the normalization bound improves to $n\le4$. We also prove that $\SL_2/N(T)$, where $T$ is a maximal torus, is an RR-space. This quotient is the complement of a smooth conic in $\mathbf P^2$.
\end{abstract}
\maketitle

\section{Introduction}

On an affine homogeneous space of a complex semisimple group, holomorphic automorphisms can send any finite ordered set of distinct points to any other such set of the same size. For infinite closed discrete sets, the corresponding extension problem requires a condition on the sets. We prove that weak tameness suffices: on the spaces considered here, every injection between weakly tame sets extends to a holomorphic automorphism. Weak tameness asks only that the points can be moved beyond arbitrarily assigned exhaustion levels.

Rosay and Rudin introduced tame discrete sets in $\C^n$, $n\ge2$, by requiring an automorphism that sends the set onto $\N\times\{0\}^{n-1}$ \cite{RosayRudin}. Two intrinsic definitions extend this notion to other complex manifolds. Weak tameness was developed by Winkelmann, and strong tameness was introduced by Andrist and Ugolini \cite{WinkelmannStein,AndristUgolini}.

All algebraic groups and varieties are over $\C$. A semisimple (respectively, simple) group is connected with semisimple (respectively, nonabelian simple) Lie algebra. We write $\N=\{1,2,\ldots\}$ and $\Aut(X)$ for the holomorphic automorphism group. By a discrete set we mean a closed discrete subset of the ambient manifold.

A \emph{proper exhaustion} on a complex manifold $X$ is a continuous function $\rho:X\to[0,\infty)$ such that $\{x\in X:\rho(x)\le R\}$ is compact for every $R\ge0$.

\begin{definition}\label{def:tameness}
Let $\Gamma\subseteq\Aut(X)$ be a subgroup and let $D\subset X$ be an infinite closed discrete set. The set $D$ is \emph{weakly $\Gamma$-tame} if, for every proper exhaustion $\rho:X\to[0,\infty)$ and every function $b:D\to[0,\infty)$, there is $\Phi\in\Gamma$ such that
\[
 \rho(\Phi(x))\ge b(x)\qquad(x\in D).
\]
It is \emph{strongly $\Gamma$-tame} if every injection $D\to D$ extends to an element of $\Gamma$. When $\Gamma=\Aut(X)$, we omit the group.
\end{definition}

Strong tameness implies weak tameness: one can choose a self-injection whose images lie beyond the assigned exhaustion levels. The converse holds in $\C^n$ but fails on some Stein manifolds \cite[Section~5]{WinkelmannStein}.

\subsection{The main RR theorem}
Winkelmann's definition of a Rosay--Rudin space requires a family of infinite closed discrete sets with the following properties: injections between its members extend to automorphisms; the family is invariant under automorphisms; and every sufficiently sparse infinite set belongs to it. Sparseness is measured by a \emph{threshold sequence}: for some proper exhaustion $\rho$ and positive numbers $r_n\uparrow\infty$, every infinite subset $D\subset X$ satisfies the implication
\begin{equation}\label{eq:intro-threshold}
 \#\{x\in D:\rho(x)\le r_n\}\le n\quad(n\ge1)
 \quad\Longrightarrow\quad D\text{ belongs to the family}.
\end{equation}
The definition also requires a nonzero complete holomorphic vector field annihilating a nonconstant holomorphic function \cite[Definition~2.1]{WinkelmannRR}. Section~\ref{sec:tameness} verifies these four requirements for the family of weakly tame sets.

\begin{maintheorem}\label{thm:RR}
Let $G$ be a connected complex semisimple algebraic group and let $H\subset G$ be a closed connected reductive subgroup. If $X=G/H$ has positive dimension, then $X$ is a Rosay--Rudin space. In particular, weak and strong tameness coincide, every injection between tame infinite closed discrete sets extends to a holomorphic automorphism, and $X$ admits a threshold sequence.
\end{maintheorem}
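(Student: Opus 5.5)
We prove the theorem by checking, for the family $\mathcal F$ of weakly tame infinite closed discrete subsets of $X=G/H$, the four requirements of Winkelmann's definition \cite[Definition~2.1]{WinkelmannRR}. Two of them are formal. $\mathcal F$ is $\Aut(X)$-invariant: if $\Phi$ pushes $D$ beyond the levels $b$ with respect to $\rho$, then $\Phi\circ\psi^{-1}$ does the same for $\psi(D)$, using the function $b\circ\psi^{-1}$. For the vector field, $X$ is affine by Matsushima because $H$ is reductive. Fix a nonzero nilpotent $\xi\in\g$. The vector field it induces on $X$ is complete. Its flow is a unipotent one-parameter subgroup acting algebraically, and by Rosenlicht the quotient by this subgroup has a nonconstant invariant regular function when $\dim X\ge2$; the case $\dim X=1$ does not occur for reductive $H$ in semisimple $G$. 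So the field annihilates a nonconstant holomorphic function.

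The real content is the extension property together with the threshold sequence. The plan is the classical Rosay--Rudin scheme, carried out through a fixed model sequence. Fix a proper exhaustion $\rho$ and a discrete model sequence $E=\{e_k\}$ placed along an orbit of a nilpotent one-parameter subgroup.

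\emph{Step A (normalization).} Every sufficiently sparse enumerated sequence $(x_k)$, in the sense of \eqref{eq:intro-threshold}, is mapped to $(e_k)$ in order by a finite composition of flow maps $\exp(f_j\xi_j)$. Here $\xi_j$ are complete fields from nilpotent elements, and each $f_j$ is a holomorphic function in $\ker\xi_j$, namely a pullback under the quotient map by a unipotent or, more generally, solvable subgroup. The $f_j$ are produced by entire interpolation (Weierstrass/Cartan B on the quotient) with prescribed values at the images of the points. Generic position has to be arranged first, and sparseness is what allows this inductively, by choosing the $r_n$ so that each new point avoids the bad analytic sets determined by the finitely many earlier ones.

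\emph{Step B (permutations).} Every injection $E\to E$ is realized by two such flow maps, again by interpolating in two invariant-function coordinates. Since $E$ lies on one orbit, it separates in the invariants of a second, transversal solvable subgroup.

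\emph{Step C (weak implies normalizable).} Given a weakly tame $D$, choose $b$ growing fast enough that after applying $\Phi$ the set $\Phi(D)$ satisfies the threshold condition. Then Step A applies to $\Phi(D)$. Composing Steps A and B, any injection $D\to D'$ between weakly tame sets extends to an automorphism. Strong tameness follows, and also the converse direction, since strong implies weak formally.

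The main obstacle is Step A, specifically finding two (or a few) solvable subgroups of $G$ whose algebras of invariant regular functions on $G/H$ jointly separate points generically and have nonvanishing differentials along the corresponding flows. This must hold uniformly for every reductive $H$, which is what makes interpolation across the quotient map possible. I would first try a Borel subgroup $B$ and its opposite $B^-$ acting through their unipotent radicals $U,U^-$. I would use $\C[X]^U$, which is finitely generated (Hadžiev), and a dense open set where the map $X\to X/\!\!/U$ has connected fibers. The delicate point is the general-position lemma ensuring that the points of the sequence avoid the non-flat locus after a preliminary generic group translation. This lemma is also what dictates the explicit radii $r_n$ in the threshold sequence.
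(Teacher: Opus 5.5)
Your formal steps match Section~\ref{sec:tameness}: automorphism invariance of weak tameness, the composition argument of Step~C, and the fourth axiom. For the last one, choose $\xi$ outside the Lie algebra of the action kernel, or its field vanishes. But Step~A is the whole content of the theorem. You leave it as ``the main obstacle,'' and both mechanisms you suggest for it would fail.

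\emph{First gap: general position cannot come from sparseness.} The radii are fixed before the sequence is given. The bad sets determined by earlier points, e.g.\ the level set $\{q=q(x_1)\}$, are closed but noncompact, so a point beyond any radius can still lie on them. Also, distinct values $q(x_m)$ need not form a closed discrete set, which entire interpolation requires. The paper instead uses one element $k$ of a compact real form $K$ and a small-ball estimate uniform over the unit sphere (Lemma~\ref{lem:smallball}). Choosing $R_m$ with $\eta(m/R_m)<2^{-m-2}$ gives $\|Q(kx_m)\|>m$ for all $m\ge2$ outside a set of measure less than $1/4$. The countably many Haar-null conditions (distinct values, $kx_m\in\Omega_m$) are then discarded (Proposition~\ref{prop:position}). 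One unipotent shear, interpolated at a tuple of invariant linear functions, makes the values $q(y_m)$ distinct and divergent (Lemma~\ref{lem:prepare}). This summability is what fixes the thresholds.

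\emph{Second gap: the choice of subgroups, and control of intermediate points.} $U$ and $U^-$ cannot serve as the two subgroups. For any $p$, the set $UU^-p$ has dimension at most $2|\Phi^+|=\dim\g-\rk G$, which is less than $\dim X$ whenever $\dim H<\rk G$ (e.g.\ $\SL_3/\C^*$, or $X=G$). Full Borel subgroups are useless, since $\C[X]^B=\C$. The paper's key choice lies in between. $S_1=U(\ker\lambda)^\circ$ fixes a $U$-invariant highest-weight eigenfunction $q$ of weight $\lambda$. $S_2=n_wS_1n_w^{-1}$ fixes $r=n_w\cdot q$, of independent weight $w\lambda$. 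By Lemma~\ref{lem:transverse}, $\salg_1+\salg_2$ is either $\g$ or misses only $\g_{-\beta}$, which a conjugate of $\h$ fills. This needs $\rk G\ge2$ and $\dim H>0$, which is why the group case goes through the left-right action and rank one through explicit shears on the quadric.

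Your scheme also has no device controlling the invariants at intermediate points. In the paper, the model points $p_m$ lie on one torus orbit with $q(p_m)=r(p_m)=m$, and each $y_m\in\Omega_m$ factors as $a_mb_mp_m$ with $a_m\in S_1$, $b_m\in S_2$. Torus and unipotent flows of $S_1$, interpolated at $q(y_m)$, remove $a_m$; so this route needs torus flows besides the nilpotent fields $\xi_j$ you allow. The intermediate points then satisfy $r(b_mp_m)=r(p_m)=m$ because $S_2$ fixes $r$, so the second interpolation gets distinct discrete nodes for free. The same torus-orbit device, via $T=T_rT_q$, gives Step~B. For your model on a single unipotent orbit, you would still need to separate the intermediate points in the invariants of whatever group returns them to $E$.
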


A smooth affine variety is \emph{flexible} if its tangent spaces are spanned by the velocity vectors of algebraic additive-group actions. In dimension at least two, flexibility is equivalent to transitivity of the group generated by these actions and implies its transitivity on finite ordered configurations of distinct points \cite[Theorem~0.1]{AFKKZ}. The spaces in Theorem~\ref{thm:RR} are affine by reductivity of $H$. They are flexible because the nilpotent elements span the semisimple Lie algebra, whose action fields span every tangent space. Thus Theorem~\ref{thm:RR} verifies for these homogeneous spaces Winkelmann's conjecture that every smooth flexible complex affine variety of dimension at least two is an RR-space \cite[Section~2.6]{WinkelmannRR}.

Andrist and Arzhantsev recently proved generic infinite transitivity for many groups generated by two root subgroups of $\Aut(\C^2)$: their diagonal actions on $(\C^2)^m$ have an open orbit for every $m$ \cite{AndristArzhantsev}. Theorem~\ref{thm:RR} concerns interpolation on infinite closed discrete sets under weak tameness.

Winkelmann also conjectures that the density property and the RR property are equivalent for Stein manifolds \cite[Section~2.6]{WinkelmannRR}. The density property means that the Lie algebra generated by complete holomorphic vector fields is dense in the space of all holomorphic vector fields, for uniform convergence on compact sets \cite{Varolin}. Donzelli, Dvorsky, and Kaliman proved algebraic density results for affine homogeneous spaces \cite{DonzelliDvorskyKaliman}; Kaliman and Kutzschebauch established the algebraic density property for every connected affine homogeneous space other than the line and algebraic tori \cite{KalimanKutzschebauch}. Their results imply the density property for the spaces in Theorem~\ref{thm:RR}. Recent criteria of Andrist, Draisma, Freudenburg, Huang, and Kutzschebauch use $\mathfrak{sl}_2$-triples of locally nilpotent derivations to establish algebraic density \cite{AndristDraismaEtAl}. Our proof constructs the automorphisms needed for the RR property directly.

Andrist and Ugolini proved that a connected positive-dimensional affine homogeneous space with reductive stabilizer contains a strongly tame set unless the space is isomorphic to $\C$ or $\C^*$ \cite[Theorem~1.5]{AndristUgoliniHom}. On a Stein manifold with the density property, they also proved that any two strongly tame sets are equivalent under an automorphism \cite[Proposition~2.4]{AndristUgolini}. Here we show that every weakly tame set can be sent to one fixed sequence. We also obtain a fixed threshold sequence such that every infinite set satisfying its counting inequalities is tame.

The group case and the rank-one affine quadric were already known to be RR-spaces by Winkelmann's results on character-free algebraic groups and Danielewski surfaces \cite{WinkelmannGroups,WinkelmannRR}. The argument here uses functions invariant under two solvable subgroups and applies to arbitrary connected reductive $H$.

The companion paper \cite{DvorskyRegular} proves Hamiltonian tameness, relative Hamiltonian density, and symplectic jet interpolation on regular semisimple adjoint orbits of complex semisimple Lie algebras.

\subsection{Normalization by finitely many flows}
A \emph{flow map} is the time-one map of a complete autonomous holomorphic vector field, whose flow is defined for every complex time. Let $\Autfl(X)$ be the group of finite compositions of flow maps. For a nowhere-vanishing holomorphic volume form $\omega$, let $\Autvfl(X)$ be the subgroup generated by the flow maps of complete vector fields whose flows preserve $\omega$, or equivalently, whose $\omega$-divergence is zero.

\begin{maintheorem}\label{thm:main}
Under the hypotheses of Theorem~\ref{thm:RR}, there are distinct points $p_m$ forming a closed discrete set $D_0$, a proper exhaustion $\rho:X\to(0,\infty)$, and thresholds
\[
 R_1=0<R_2<R_3<\cdots,\qquad R_m\longrightarrow\infty,
\]
with the following properties.
\begin{enumerate}[label=\textup{(\alph*)}]
\item Let $(x_m)_{m\ge1}$ be a sequence of distinct points of $X$ satisfying
\[
 \rho(x_m)>R_m\qquad(m\ge1).
\]
Then there exist flow maps $\Phi_1,\ldots,\Phi_n$, with $1\le n\le6$, such that
\[
 (\Phi_n\circ\cdots\circ\Phi_1)(x_m)=p_m\qquad(m\ge1).
\]
\item For every injection $\sigma:\N\to\N$, there exist flow maps $\Phi_1,\Phi_2$ such that
\[
 (\Phi_2\circ\Phi_1)(p_m)=p_{\sigma(m)}\qquad(m\ge1).
\]
\item For every subgroup $\Autfl(X)\subseteq\Gamma\subseteq\Aut(X)$, the following conditions on an infinite closed discrete set $D$ are equivalent: weak $\Gamma$-tameness, strong $\Gamma$-tameness, and equivalence to $D_0$ under $\Gamma$. For every weakly $\Gamma$-tame set $D$ and every enumeration $D=\{x_m:m\in\N\}$, there is $\Phi\in\Gamma$ such that $\Phi(x_m)=p_m$ for all $m$. Every injection between two weakly $\Gamma$-tame sets extends to an element of $\Gamma$.
\item There is a $G$-invariant algebraic volume form $\omega$. The vector fields generating the flow maps in \textup{(a)} and \textup{(b)} may be chosen with zero $\omega$-divergence, and \textup{(c)} holds for every subgroup $\Autvfl(X)\subseteq\Gamma\subseteq\Aut(X)$.
\end{enumerate}
\end{maintheorem}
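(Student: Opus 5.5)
The plan is to build everything from \emph{shear-type} flows. If $f$ is a holomorphic function annihilated by a complete vector field $\xi$, then $f\xi$ is again complete, with time-one map $x\mapsto\phi^\xi_{f(x)}(x)$. I would take $\xi$ to be the action fields of nilpotent elements of $\g$. These are complete, and they are divergence-free for the $G$-invariant volume form $\omega$. That form exists because $G$ and $H$ are both unimodular, which gives (d) once everything is built from such fields. Since $\xi(f)=0$, we have $\diver_\omega(f\xi)=f\,\diver_\omega\xi=0$.

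The key input is two solvable subgroups, for instance $U^\pm$, the unipotent radicals of opposite Borel subgroups. For these I would use the rings of invariant regular functions $\C[X]^{U^+}$ and $\C[X]^{U^-}$. These rings are finitely generated and separate generic $U^\pm$-orbits, and $\C[X]^{U^\pm}$ is annihilated by the fields coming from $\ulie^\pm$.

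Next I fix the normal form. I choose a unipotent one-parameter subgroup $u(t)=\exp(t\nu)$ and a base point $p$, and set $p_m=\exp(t_m\nu')\cdot p$ along a closed orbit curve of a suitable unipotent element. Here the $t_m\to\infty$ fast and are chosen so that an invariant function $h\in\C[X]^{U^-}$ takes distinct, rapidly growing values on $D_0$. For (a), I would use the standard inductive interpolation scheme.

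\begin{enumerate}
\item The first flow, $f_1\xi_1$, pushes the $x_m$ into general position. After this step an invariant function $g\in\C[X]^{U^+}$ takes pairwise distinct values on the points, and those values tend to infinity. The map is produced by Mittag-Leffler/Weierstrass interpolation of an entire function $F$ with $f_1=F\circ g'$.
\item Two more flows, of the form $F_j(g)\,\eta_j$ with $\eta_j\in\ulie^+$, move each point along its $U^+$-orbit. The target is a point on a fixed transversal where an invariant $h\in\C[X]^{U^-}$ takes the prescribed value $h(p_m)$. This works because $g$ has distinct values, so entire functions of $g$ can be prescribed independently at each point.
\item Symmetrically, flows $F_j(h)\,\zeta_j$ with $\zeta_j\in\ulie^-$ use the distinct values of $h$ to land exactly on $p_m$.
\end{enumerate}

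Counting two pairs of $U^\pm$-steps plus the general-position steps gives $n\le6$. The exhaustion $\rho$ and the thresholds $R_m$ are chosen so that the entire interpolating functions exist with convergent coefficients. Concretely, the prescribed values at the $m$-th point must be reachable in a single unipotent time, and the functions $g$ and $h$ must grow with $\rho$. This requires properness estimates of the form $|g|+|h|\ge c\rho^\epsilon$ off small sets, which I would get from the fact that $U^+$- and $U^-$-invariants together generate $\C[X]$ up to finite extension.

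For (b), the points $p_m$ already have distinct $h$-values and lie on a common transversal. A single $U^+$-shear followed by a single $U^-$-shear therefore realizes any injection $\sigma$. Part (c) is then formal. Weak $\Gamma$-tameness lets me choose $\Phi\in\Gamma$ with $\rho(\Phi(x_m))>R_m$ for every enumeration. Applying (a) sends $D$ to $D_0$ with its enumeration preserved. Composing with (b) and inverses extends any injection, which gives strong tameness; the remaining implications are immediate.

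The main obstacle is step~(1) together with the growth estimates. Arbitrary points must be made generic for the invariant functions, avoiding the non-free loci of $U^\pm$ and the zero sets where the orbit parametrization degenerates. At the same time, the interpolation must stay controlled uniformly for an infinite sequence. I would first try to handle this by choosing $R_m$ inductively, absorbing all the necessary perturbations into the thresholds.
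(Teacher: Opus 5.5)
\textbf{There is a genuine gap, and it lies in the choice of the two solvable groups.}

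Once step~1 is done, your steps~2 and~3 move $x_m$ first by an element of $U^+$ and then by an element of $U^-$. They can therefore end at $p_m$ only if $x_m\in U^+U^-p_m$. The differential of $(a,b)\mapsto abp$ at $(1,1)$ has image $(\ulie^++\ulie^-+\g_p)/\g_p$, and $\ulie^++\ulie^-$ has codimension $\rk G$ in $\g$. So $U^+U^-p_m$ is a proper subvariety unless the isotropy algebra projects onto all of $\tlie$.

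This fails for spaces the theorem covers:
\begin{itemize}
\item for $X=\SL_3/\{\diag(t,t^{-1},1)\}$ one has $\dim X=7>3+3$;
\item for $X=\SL_3$ with left translations one has $8>6$.
\end{itemize}
Your step~1 only makes $g$-values distinct, so nothing places $x_m$ on these proper subvarieties. Matching the single value $h(p_m)$ also does not put a point into the $U^-$-orbit of $p_m$. In these examples generic $U^-$-orbits have codimension at least four, and they are separated only by all of $\C[X]^{U^-}$.

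The missing idea is to enlarge the unipotent groups by codimension-one subtori that still fix the interpolation functions. The paper takes a highest-weight function $q$ of weight $\lambda$ and sets $S_1=U(\ker\lambda)^\circ$. It then takes $S_2=n_wS_1n_w^{-1}$, which fixes $r=n_w\cdot q$ of weight $\mu=w\lambda$. Since $\mu$ is independent of $\lambda$, one gets $\ker d\lambda+\ker d\mu=\tlie$. With $w=w_0s_\alpha$ (or $w_0$), the sum $\salg_1+\salg_2$ omits at most one root space. A conjugate of $\h$ fills that space because $\dim H>0$ (Lemma~\ref{lem:transverse}).

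The torus factors then cost one extra flow on each side (Lemma~\ref{lem:solvable}). The group case needs the left-right action to create positive-dimensional isotropy, and rank one is handled on the quadric. Note also that a flow $F_j(g)\eta_j$ with fixed $\eta_j$ moves only inside a one-parameter subgroup. To realize an arbitrary $u_m\in U^+$ by one flow, you must interpolate $\log u_m$ as an $\ulie^+$-valued entire map (Corollary~\ref{cor:unipotent}).

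\textbf{Your step~1 is circular as stated.} Writing $f_1=F\circ g'$ already requires a $\xi_1$-invariant $g'$ whose values at the $x_m$ are pairwise distinct and closed discrete. That is exactly what the step is supposed to produce, and ``absorbing perturbations into the thresholds'' does not supply it.

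The paper's device is one element $k$ of a compact real form $K$. This is itself a flow map, since it is an exponential. Take a linear tuple $Q$ whose $K$-translates have no common nonzero zero, and let $\nu$ be normalized Haar measure. The small-ball function $\eta(\varepsilon)=\sup_{\|v\|=1}\nu\{k:\|Q(kv)\|\le\varepsilon\}$ tends to $0$. This lets one choose $R_m$ with $\eta(m/R_m)<2^{-m-2}$. A Haar-generic $k$ then gives $\|Q(kx_m)\|>m$, pairwise distinct values, and $kx_m$ in prescribed Zariski-open sets, since each remaining condition excludes a null set (Proposition~\ref{prop:position}). Only after this does a single unipotent shear, interpolated at the invariant tuple $Q$, make the values of $q$ distinct and divergent.

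This is the only role of the thresholds. Entire interpolation on a closed discrete set needs no growth control (Lemma~\ref{lem:entire}), so your estimates $|g|+|h|\ge c\rho^\epsilon$ are unnecessary.

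\textbf{Your part~(b) also needs more.} It requires $p_{\sigma(m)}\in U^-U^+p_m$ for all pairs, and placing $D_0$ on a unipotent orbit curve does not ensure this. The paper instead puts the $p_m$ in one torus orbit with $q(p_m)=r(p_m)=m$. It then uses two torus flows with independent characters (Proposition~\ref{prop:torusmatch}).
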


Since $\rho>0$ and $R_1=0$, no restriction is imposed on $x_1$. The vector fields may depend on $(x_m)$, but the bound $n\le6$ is independent of the dimension and Lie type. For an arbitrary weakly tame set, one first chooses an automorphism imposing the growth inequalities. That preliminary automorphism is not included in this bound.

Part~\textup{(d)} also applies to
\[
 \Gamma=\Aut(X,\omega):=\{\Phi\in\Aut(X):\Phi^*\omega=\omega\}.
\]
Thus every injection between weakly $\Aut(X,\omega)$-tame sets extends to a volume-preserving automorphism. Parts~\textup{(c)} and~\textup{(d)} likewise identify weak and strong tameness with respect to $\Autfl(X)$ and $\Autvfl(X)$.

On a regular semisimple adjoint orbit $\mathcal O$ of dimension $2d$, let $\eta$ be the Kostant--Kirillov--Souriau form and put $\nu=\eta^d/d!$. The group $\operatorname{HAut}(\mathcal O)$ generated by complete Hamiltonian flows is contained in $\Aut_{\nu,\mathrm{fl}}(\mathcal O)$. Theorem~\ref{thm:main}(d) applies to groups containing $\Aut_{\nu,\mathrm{fl}}(\mathcal O)$; the companion paper realizes interpolation within $\operatorname{HAut}(\mathcal O)$. In dimension two these flow groups coincide, since $\nu=\eta$ and $\mathcal O$ is simply connected.

Taking $r_n=R_{n+1}$ gives the counting criterion \eqref{eq:intro-threshold}. Indeed, ordering a set satisfying the counting inequalities by nondecreasing $\rho$ gives $\rho(x_m)>R_m$ for $m\ge2$, so Theorem~\ref{thm:main}\textup{(a)} applies.

There is also an extension to disconnected stabilizers. Let $N$ be the kernel of the action of $G$ on $G/H$. Proposition~\ref{prop:disconnected} proves the RR property and Theorem~\ref{thm:main}\textup{(a)--(c)} for closed reductive $H$ with $\dim(H/N)>0$. The case where $H/N$ is finite and nontrivial remains outside the present argument. A $G$-invariant algebraic volume form exists precisely when every $h\in H$ acts with determinant one on $\g/\h$; here $\g=\Lie(G)$ and $\h=\Lie(H)$. The constructed flows preserve this form whenever it exists. The extension includes $\SL_2/N_{\SL_2}(T)\simeq\mathbf P^2\setminus\mathcal Q$, where $T$ is a maximal torus of $\SL_2$ and $\mathcal Q$ is a smooth conic. This surface is an RR-space but has no nowhere-vanishing holomorphic two-form.

\subsection{Two successive interpolations}
The construction can be seen on the quadric
\[
 \SL_2/T\simeq\left\{S=\begin{pmatrix}a&b\\b&c\end{pmatrix}:ac-b^2=1\right\},
 \qquad g\cdot S=gSg^{\mathsf T}.
\]
Upper and lower unipotent matrices act by
\[
 (a,b,c)\longmapsto(a+2tb+t^2c,b+tc,c),\qquad
 (a,b,c)\longmapsto(a,b+sa,c+2sb+s^2a).
\]
For a sufficiently sparse sequence, one element of $\SU_2$ makes the $c$-coordinates nonzero, pairwise distinct, and closed discrete. Write the new points as $(a_m,b_m,c_m)$. Fix square roots $\beta_m^2=m^2-1$, choose $d_m^2=mc_m-1$, and choose entire functions satisfying
\[
 F_+(c_m)=\frac{d_m-b_m}{c_m},\qquad
 F_-(m)=\frac{\beta_m-d_m}{m}.
\]
Substitute $t=F_+(c)$ in the upper action and then $s=F_-(a)$ in the lower action. The determinant equation gives
\[
 (a_m,b_m,c_m)\longmapsto(m,d_m,c_m)
 \longmapsto(m,\beta_m,m).
\]
The first shear preserves $c$ and makes the $a$-coordinate equal to $m$; the second preserves $a$ and is obtained by interpolation at these integers. Invariance of the coefficients makes both shears complete. Section~\ref{subsec:quadric} also gives the shears realizing injections of the fixed sequence into itself.

For the general construction, suppose that the action is effective, $\rk G\ge2$, and $\dim H>0$. Proposition~\ref{prop:model} constructs regular functions $q,r$, connected solvable subgroups $S_1,S_2$ fixing them, and points $p_m$ on one torus orbit with $q(p_m)=r(p_m)=m$. The functions transform under that torus by independent characters. The subgroups and points satisfy
\[
 \Lie(S_1)+\Lie(S_2)+\g_{p_m}=\g.
\]
Here $\g_{p_m}$ is the stabilizer Lie algebra. The map $(a,b)\mapsto abp_m$ therefore has surjective differential at $(1,1)$, and its image contains a nonempty Zariski-open set $\Omega_m$. The choice of subgroups uses positive-dimensional isotropy; the Lie algebra calculation is in Lemma~\ref{lem:transverse}.

Lemma~\ref{lem:prepare} sends a sufficiently sparse sequence to points $y_m\in\Omega_m$ with distinct divergent values $q(y_m)$. For each $m$, choose $a_m\in S_1$ and $b_m\in S_2$ with $y_m=a_mb_mp_m$. We realize
\begin{equation}\label{eq:intro-corrections}
 y_m=a_mb_mp_m
 \xrightarrow[\text{interpolate at }q(y_m)]{\ a_m^{-1}\ }
 b_mp_m
 \xrightarrow[\text{interpolate at }r(b_mp_m)=m]{\ b_m^{-1}\ }
 p_m.
\end{equation}
Factor $a_m^{-1}$ into a torus element and a unipotent element, and interpolate their logarithms at $q(y_m)$. Since both factors lie in $S_1$, their actions preserve $q$, and the interpolants define two complete flows. At the intermediate points, $r(b_mp_m)=r(p_m)=m$ because $S_2$ fixes $r$. Interpolating the factors of $b_m^{-1}$ at these integers gives two further flows.

To obtain the points $y_m$, Lemma~\ref{lem:prepare} uses one compact group element and one unipotent shear. In a closed equivariant embedding, choose a tuple $Q$ of linear functions fixed by the unipotent subgroup. A uniform measure estimate gives one compact element $k$ for which the values $Q(kx_m)$ are distinct and divergent and the points $kx_m$ meet the required open-set conditions. Interpolating the shear times at $Q(kx_m)$ makes the values $q(y_m)$ distinct with moduli tending to infinity. The compact action is itself a complete flow map, giving $n\le6$ in Theorem~\ref{thm:main}(a).

For a general space in Theorem~\ref{thm:RR}, remove the action kernel and write $X=\bar G/\bar H$. If $\rk\bar G\ge2$ and $\dim\bar H>0$, Proposition~\ref{prop:principal} applies. If $\bar H$ is trivial, use the left-right action on $\bar G$, whose diagonal stabilizer has positive dimension. The remaining case is $\SL_2/T$. In each case, injective self-maps of the fixed sequence extend by two flows. Section~\ref{sec:tameness} deduces the RR axioms from these extensions and the normalization of sparse sequences.

\subsection{Examples and consequences}
The quotient
\[
 \SL_3(\C)\big/\{\diag(t,t^{-1},1):t\in\C^*\}
\]
has dimension seven, whereas a Borel subgroup has dimension five, so it is not spherical. Section~\ref{sec:cstar} gives explicit formulas for the normalizing maps.

Suppose now that $G$ is simple of rank at least two and $G/H$ is spherical, meaning that a Borel subgroup has an open orbit. If $G/H$ admits a closed equivariant embedding into an irreducible module, Proposition~\ref{prop:position} applies to a single highest-weight linear function. For a sufficiently sparse sequence, one compact action puts its points in the open Borel orbit with distinct, discrete values of that function. A unipotent flow moves the points into one torus orbit, and two torus flows send them to the fixed sequence. Thus the bound in Theorem~\ref{thm:main}\textup{(a)} improves to $n\le4$.

\begin{corollary}\label{cor:spherical}
Under the hypotheses of Theorem~\ref{thm:main}, suppose $G$ is simple of rank at least two, $H$ is spherical, and $N_G(H)/H$ is finite cyclic. The set $D_0$, its enumeration, the exhaustion, and the thresholds can be chosen so that all conclusions of that theorem hold, with $n\le4$ in \textup{(a)}.
\end{corollary}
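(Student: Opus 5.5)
The plan is to reduce the corollary to the spherical positioning statement, Proposition~\ref{prop:position}, by producing the required closed equivariant embedding into an irreducible module from the hypothesis that $N_G(H)/H$ is finite cyclic. Once that is done, the $n\le4$ normalization chain replaces the six-flow chain of Theorem~\ref{thm:main}(a). Parts (b)--(d) then follow exactly as in the general argument.

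\textbf{Step 1: the embedding.} Since $H$ is reductive and spherical, $G/H$ is affine. By Frobenius reciprocity, each simple $G$-module $V(\lambda)$ appears in $\C[G/H]$ with multiplicity $\dim V(\lambda)^{*H}\le1$. The weight monoid $\Lambda^+(G/H)$ is finitely generated. The group $N_G(H)/H$ acts on the $H$-fixed lines and hence on the spherical weights. I would use the finite cyclic hypothesis to find one weight $\lambda$ with the following properties: the line $V(\lambda)^{*H}=\C v$ has stabilizer exactly $H$ (not merely $N_G(H)$), and the orbit $Gv$ is closed. A cyclic generator $\zeta$ of $N_G(H)/H$ acts on $v$ by a primitive root of unity, and this separates $H$ from the rest of its normalizer. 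Closedness of $Gv$ holds because $Gv\simeq G/H$ is affine, so the orbit is closed by Matsushima together with reductivity of $G_v=H$. The identification $G_v=H$ rather than a finite extension is where cyclicity is used. For a general finite $N_G(H)/H$ one would need a character of that group that is faithful on it, and such a character exists exactly when the group is cyclic. This gives $X\hookrightarrow V(\lambda)$, closed and $G$-equivariant, with the highest-weight linear function $\ell$ restricting to a $B$-semi-invariant $f$ with $f\neq0$ on the open $B$-orbit.

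\textbf{Step 2: the shorter chain.} I will take $p_m$ on a single $T$-orbit in the open $B$-orbit, with values of two $T$-semi-invariants (characters independent, using $\rk G\ge2$) prescribed as in Proposition~\ref{prop:model}. For a sparse sequence, Proposition~\ref{prop:position} supplies one compact element $k$, which is a flow map of a divergence-free field since $\omega$ is $G$-invariant. After applying $k$, the points lie in the open $B$-orbit and the values $f(kx_m)$ are distinct and discrete. Next, one unipotent shear whose time is interpolated as an entire function of $f$ moves the points into the torus orbit $Tp$. This shear is complete because $f$ is $U$-invariant. Finally, two torus flows, with logarithms interpolated at the values of two $T$-semi-invariants fixed by complementary subtori, send the points to $p_m$, exactly as in \eqref{eq:intro-corrections}. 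This gives $n\le4$. The thresholds $R_m$ and the exhaustion are chosen as in the proof of Theorem~\ref{thm:main}, now adapted to the measure estimate in Proposition~\ref{prop:position}.

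\textbf{Step 3: the remaining conclusions.} For (b), (c), and (d), I will check that the new $D_0$ still admits the two-flow realization of injections. This is the same torus interpolation at the integer values $q(p_m)=r(p_m)=m$. The deduction of (c) and (d) from (a) and (b) in Section~\ref{sec:tameness} is formal.

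I expect the main obstacle to be Step 1, specifically getting a single irreducible $V(\lambda)$ with stabilizer exactly $H$. Spherical weights generate a lattice whose quotient detects $N_G(H)/H$, so I would first try choosing $\lambda$ to be a sum of generators of $\Lambda^+(G/H)$ that pairs nontrivially with every element of the character group of $N_G(H)/H$.
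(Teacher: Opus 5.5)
Your Steps 2 and 3 follow the paper's proof. The paper proves Corollary~\ref{cor:spherical} by applying Theorem~\ref{thm:sparse}: one element of $K$, then one unipotent flow interpolating the full logarithm $\log u_m^{-1}\in\ulie$ at the values of $q$ as in Corollary~\ref{cor:unipotent}, then the two torus flows of Proposition~\ref{prop:torusmatch}; (b)--(d) follow formally as in Section~\ref{sec:tameness}. The paper does not build the embedding by hand. It takes it from Losev's closed-embedding criterion \eqref{eq:Losev}, together with Lemma~\ref{lem:sphericalcentral}, which gives $\mathfrak A_G(G/H)=N_G(H)/H$ for spherical $H$.

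\textbf{Closedness.} Your substitute in Step 1 has a genuine gap, and the closedness claim is false as argued. Matsushima's criterion only says that $G/H$ is affine. An orbit $Gv\simeq G/H$ with reductive stabilizer need not be closed in $V$. For example, $\SL_2$ acting on binary cubics has trivial stabilizer at $x^2y$, so that orbit is affine. Yet $\diag(t,t^{-1})$ rescales $x^2y$ by $t^{\pm1}$, so $0$ lies in its closure. Your argument never uses finiteness of $N_G(H)/H$ to get closedness, and that is exactly where finiteness is needed (compare $\SL_{2a+1}/\Sp_{2a}$ in Table~\ref{tab:negative}). A correct direct argument is Luna's criterion: for reductive $H$ and $v\in V^H$, the orbit $Gv$ is closed if and only if $N_G(H)v$ is closed. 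Here $N_G(H)$ acts on the line $\C v$ through the finite group $N_G(H)/H$, so $N_G(H)v$ is finite. This point matters: on a nonclosed orbit $\|\cdot\|$ is not a proper exhaustion, so the thresholds in (a) and the counting axiom lose their meaning.

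\textbf{The stabilizer.} You have also not shown $G_v=H$. The stabilizer contains $H$ but need not lie in $N_G(H)$; for $\lambda=0$ it is all of $G$, and in general $v$ may be fixed by a larger subgroup. You need two things at once:
\begin{itemize}
\item $\lambda$ generic enough that $G_v^\circ=H$, after which connectedness of $H$ gives $G_v\subseteq N_G(H)$;
\item $N_G(H)/H$ acting faithfully on $\C v$.
\end{itemize}
The second requirement alone is easy. The characters by which $N_G(H)/H$ acts on the one-dimensional $B$-eigenspaces of $\C[G/H]$ form a submonoid, hence a subgroup, of the finite dual group. They separate points because $N_G(H)/H$ acts faithfully on $\C[G/H]$, which these eigenfunctions generate as a $G$-module. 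So the subgroup is the whole dual group, and for a cyclic group it contains a faithful character. Arranging both properties in one irreducible module is the substantive content of Losev's theorem, and your proposal leaves it as something you ``would first try''.

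The simplest repair is to cite \eqref{eq:Losev} and Lemma~\ref{lem:sphericalcentral}, as the paper does. The rest of your plan then goes through unchanged.
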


For reductive spherical $H$, Losev's theorem says that $G/H$ admits a closed equivariant embedding into an irreducible $G$-module if and only if $N_G(H)/H$ is finite cyclic \cite[Theorem~1.4 and Example~5.1]{Losev}. The construction in Section~\ref{sec:spherical} also applies to disconnected reductive $H$ when $G$ is simple of rank at least two, $G/H$ is spherical, and it admits a closed equivariant embedding into an irreducible module. Its flows preserve a $G$-invariant algebraic volume form whenever such a form exists. Section~\ref{sec:matrices} writes the flow maps explicitly for $\SL_3/\SO_3$, realized as the symmetric $3\times3$ matrices of determinant one.

Winkelmann's results on RR-spaces give two further consequences. The first concerns complements:
\begin{corollary}\label{cor:oka}
Under the hypotheses of Theorem~\ref{thm:RR}, if $D\subset X$ is tame, finite, or empty, then $X\setminus D$ is Oka.
\end{corollary}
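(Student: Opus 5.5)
The plan is to deduce the corollary from Theorem~\ref{thm:RR} together with Winkelmann's results on Rosay--Rudin spaces \cite{WinkelmannRR}. We treat the finite or empty case and the tame case separately, since the mechanisms differ.

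\emph{Finite or empty $D$.} Since $X=G/H$ is a connected affine homogeneous space of a complex Lie group, it is Oka by Grauert's theorem on complex homogeneous manifolds. This settles $D=\emptyset$. For finite $D$, use that $X$ has the density property, which follows from the Kaliman--Kutzschebauch algebraic density property cited in the introduction. On a Stein manifold with the density property, complements of finite sets are Oka; more precisely, complements of tame sets are Oka by the Andersén--Lempert-type argument of Kusakabe or Winkelmann. Alternatively, we stay inside the RR framework. We use Winkelmann's result that in an RR-space the complement of a finite set is again Oka, which he proves by adjoining a tame set. If needed, we reduce this to the tame case via Theorem~\ref{thm:main}(b). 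Choose a tame set $D'$ disjoint from $D$. Then $X\setminus D$ is a union of $X\setminus (D\cup D')$-type images under automorphisms, and Oka-ness follows by a localization principle. I would first try to quote Winkelmann's statement directly.

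\emph{Tame $D$.} Theorem~\ref{thm:RR} says $X$ is an RR-space. Winkelmann proves that for an RR-space, the complement of any set in the RR family is Oka. Sketched, his argument runs as follows. Given a holomorphic map from a compact convex set, use automorphisms fixing $D$ to push images away. The strong tameness (Theorem~\ref{thm:main}(c)) gives an abundance of automorphisms of $X\setminus D$, and the complete vector field annihilating a nonconstant function gives dominating sprays. This yields ellipticity, or at least condition CAP, on $X\setminus D$. Since weak and strong tameness coincide on $X$ and the RR family is exactly the tame sets (Section~\ref{sec:tameness}), the hypothesis matches.

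The main obstacle is checking that the family used in Winkelmann's theorem coincides with the family of tame sets in the corollary. Concretely, any tame $D$ must belong to the RR family verified in Section~\ref{sec:tameness}. This is where Theorem~\ref{thm:main}(c) enters: it identifies weak tameness, strong tameness, and $\Gamma$-equivalence to $D_0$.
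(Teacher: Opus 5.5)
Your proposal is correct and, at its core, is the paper's proof: apply Winkelmann's Oka-complement theorem for RR-spaces \cite[Theorem~2.2(iii), Proposition~5.25]{WinkelmannRR} to Theorem~\ref{thm:RR}. That cited result already covers tame, finite, and empty sets at once, so your separate detours through Grauert's theorem, the density property, and a localization argument are unnecessary, as is the extra check via Theorem~\ref{thm:main}\textup{(c)} beyond what Section~\ref{sec:tameness} already provides.
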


The second recovers the affine-line description of tame sets. Fix a closed algebraic curve $C\simeq\mathbb A^1$ in $X$ and an infinite closed discrete set $D_C\subset C$. An infinite closed discrete set $D\subset X$ is tame if and only if some automorphism sends it onto $D_C$; see Corollary~\ref{cor:line-normal-form}. Such curves exist: at least one root subgroup acts nontrivially, and each of its nontrivial orbits is closed in the affine variety and isomorphic to $\mathbb A^1$.

Sections~\ref{sec:flows}--\ref{sec:tameness} prove Theorems~\ref{thm:RR} and~\ref{thm:main}. Section~\ref{sec:spherical} proves Corollary~\ref{cor:spherical}, and Section~\ref{sec:matrices} gives the explicit normalizing maps for $\SL_3/\SO_3$. Section~\ref{sec:examples} computes $N_G(H)/H$ for several spherical pairs and determines which satisfy the finite cyclic hypothesis of Corollary~\ref{cor:spherical}.

\section{Complete flows from invariant functions}\label{sec:flows}

Entire interpolation at the values of invariant functions turns pointwise choices of Lie algebra elements into complete holomorphic flows. We first give this construction for a general group action, then apply it to unipotent groups and tori.

\subsection{Entire interpolation}

\begin{lemma}\label{lem:entire}
Let $Z\subset\C^s$ be closed and discrete, and let $E$ be a finite-dimensional complex vector space. Any map $Z\to E$ extends to a holomorphic map $\C^s\to E$.
\end{lemma}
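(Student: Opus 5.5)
Choosing a basis of $E$ identifies $E$ with $\C^k$, so it suffices to treat a single scalar function $f:Z\to\C$. I would realize $Z$ as the zero set of a reduced analytic subvariety of the Stein manifold $\C^s$ and apply Cartan's Theorem~B. A closed discrete set $Z$ is an analytic subset of dimension zero, with ideal sheaf $\mathcal I_Z$ coherent. The function $f$ is a section of $\OO/\mathcal I_Z$, which is a skyscraper sheaf with stalk $\C$ at each point of $Z$. The exact sequence
\[
0\to\mathcal I_Z\to\OO_{\C^s}\to\OO/\mathcal I_Z\to0
\]
together with $H^1(\C^s,\mathcal I_Z)=0$ shows that $H^0(\C^s,\OO)\to H^0(\C^s,\OO/\mathcal I_Z)$ is surjective. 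Hence $f$ lifts to an entire function.

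If a more elementary argument is preferred, I would enumerate $Z=\{z_j\}$ with $|z_j|\to\infty$ and build $F=\sum_j c_j g_j$. Each $g_j$ is a polynomial that vanishes at $z_1,\dots,z_{j-1}$ and equals $1$ at $z_j$. The coefficients $c_j$ are determined recursively, so that $F(z_j)=f(z_j)$ holds once convergence is ensured. The natural way to force convergence is to multiply each $g_j$ by a factor that is small on the ball $|z|\le|z_j|/2$ and equal to $1$ at $z_j$. One such factor is $e^{N(\langle z,\bar z_j\rangle-|z_j|^2)}$ with $N$ large, since its modulus is at most $e^{-N|z_j|^2/2}$ on that ball. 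After this modification the correction at $z_j$ must be recomputed, because the factor alters the values of $g_j$. The recursion still works: only finitely many earlier terms affect the value at $z_j$, and the new term vanishes at the earlier points.

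Managing this convergence estimate is the only real obstacle, so I would rely on Theorem~B as the primary argument and keep the explicit series only as a remark.
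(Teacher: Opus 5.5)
Your primary argument is correct and is essentially the paper's proof: choose a basis of $E$, then apply Cartan's Theorem~B to the exact sequence $0\to\mathcal I_Z\to\OO_{\C^s}\to\OO/\mathcal I_Z\to0$ of the coherent reduced ideal sheaf, which gives surjectivity of $\OO(\C^s)\to\prod_{z\in Z}\C$. Your explicit series remark also works, since $c_j$ depends only on earlier terms and $N_j$ can be chosen after it, but the paper does not use it.
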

\begin{proof}
The reduced ideal sheaf $\mathcal I_Z$ is coherent, since $Z$ is locally a finite set. The exact sequence
\[
 0\longrightarrow\mathcal I_Z\longrightarrow\OO_{\C^s}
 \longrightarrow\OO_Z\longrightarrow0
\]
and Cartan's Theorem~B \cite{Forstneric} give a surjection
\[
 \OO(\C^s)\longrightarrow\OO(Z)=\prod_{z\in Z}\C.
\]
Apply this to the coordinates of $E$ in a basis. 
\end{proof}

In particular, arbitrary nonzero numbers on $Z$ can be written as exponentials of an entire function: choose a logarithm at each point and interpolate those logarithms.

\subsection{Group actions and invariant functions}
For a holomorphic action of a complex Lie group $A$ on $X$ and $\xi\in\mathfrak a=\Lie(A)$, let
\[
 V_\xi(x)=\left.\frac{d}{dt}\right|_{t=0}\exp(t\xi)x
\]
be the corresponding complete vector field. We use left actions, and compositions act from right to left. For the usual action on functions,
\[
 (g\cdot f)(x)=f(g^{-1}x),\qquad \xi\cdot f=-V_\xi f.
\]

\begin{lemma}\label{lem:groupflow}
Suppose that $Q:X\to\C^s$ is holomorphic and invariant under $A$. For a holomorphic map $F:\C^s\to\mathfrak a$, the formula
\begin{equation}\label{eq:groupflow}
 \Phi_t(x)=\exp\bigl(tF(Q(x))\bigr)x
\end{equation}
defines a complete autonomous holomorphic flow on $X$.
\end{lemma}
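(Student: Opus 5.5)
The plan is to check directly the three properties of a complete flow: $\Phi_t$ is holomorphic in $(t,x)\in\C\times X$, $\Phi_0=\id$, and $\Phi_{t+s}=\Phi_t\circ\Phi_s$ for all complex $t,s$. Holomorphy is immediate. The map $(t,x)\mapsto(tF(Q(x)),x)$ into $\mathfrak a\times X$ is holomorphic, the exponential map $\mathfrak a\to A$ is holomorphic, and the action map $A\times X\to X$ is holomorphic; $\Phi_t$ is their composition. Also $\Phi_0(x)=\exp(0)x=x$.

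The key step is the group law, and this is where invariance of $Q$ enters. Fix $x$ and set $\xi=F(Q(x))\in\mathfrak a$. Since $\exp(s\xi)\in A$ and $Q$ is $A$-invariant, we have $Q(\Phi_s(x))=Q(\exp(s\xi)x)=Q(x)$. Hence $F(Q(\Phi_s(x)))=\xi$ as well, and
\[
 \Phi_t(\Phi_s(x))=\exp(t\xi)\exp(s\xi)x=\exp((t+s)\xi)x=\Phi_{t+s}(x),
\]
because $t\mapsto\exp(t\xi)$ is a one-parameter subgroup. Each $\Phi_t$ therefore has inverse $\Phi_{-t}$, so it is a biholomorphism of $X$, and $t\mapsto\Phi_t$ is a holomorphic action of $(\C,+)$ on $X$.

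It remains to identify the generating vector field and note that the flow is autonomous and defined for all complex time. Differentiating at $t=0$ gives
\[
 W(x)=\left.\frac{d}{dt}\right|_{t=0}\Phi_t(x)=V_{F(Q(x))}(x),
\]
which is a holomorphic vector field on $X$, since it depends holomorphically on $x$ through $F\circ Q$ and linearly on the Lie algebra element. By the group law, $\frac{d}{dt}\Phi_t(x)=W(\Phi_t(x))$ for all $t$. So $\Phi_t$ is the flow of $W$, it is defined for every $t\in\C$, and $W$ is complete.

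None of these steps is a real obstacle. The one point that needs care is that the coefficient $F(Q(\cdot))$ stays constant along each orbit of the flow. Without the invariance of $Q$ the composition would not close up into a one-parameter group.
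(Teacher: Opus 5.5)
Your proposal is correct and follows essentially the same route as the paper. Holomorphy comes from composing the exponential and the action. $A$-invariance of $Q$ keeps $F(Q(\cdot))$ constant along each trajectory, which yields $\Phi_t\circ\Phi_s=\Phi_{t+s}$ and the inverse $\Phi_{-t}$. Differentiating at $t=0$ gives the generator $V_{F(Q(x))}(x)$, which by linearity of $\xi\mapsto V_\xi$ is the paper's $\sum_j F_j(Q)V_{\xi_j}$.
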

\begin{proof}
The exponential map and the action are holomorphic, so \eqref{eq:groupflow} is defined and holomorphic for all $(t,x)\in\C\times X$. Invariance gives $Q(\Phi_t(x))=Q(x)$. Thus
\begin{align*}
 \Phi_s(\Phi_t(x))
 &=\exp\bigl(sF(Q(x))\bigr)\exp\bigl(tF(Q(x))\bigr)x\notag\\
 &=\exp\bigl((s+t)F(Q(x))\bigr)x=\Phi_{s+t}(x).
\end{align*}
For the middle equality, the two Lie algebra elements are scalar multiples of the same element. Also $\Phi_0=\id$ and $\Phi_{-t}$ is the inverse of $\Phi_t$.

In a basis $\xi_1,\ldots,\xi_d$ of $\mathfrak a$, write $F=\sum_jF_j\xi_j$. Differentiating at zero shows that the autonomous vector field is
\begin{equation}\label{eq:variablefield}
 W=\sum_{j=1}^dF_j(Q)V_{\xi_j}.
\end{equation}
\end{proof}

The scalar shear construction is recalled in \cite[Remark~1.6]{AndristUgoliniHom}; invariant group-valued parameters also occur in \cite[Proposition~4.10]{WinkelmannRR}.

\begin{corollary}[Unipotent interpolation]\label{cor:unipotent}
Let $U$ be a connected complex unipotent algebraic group acting on $X$, and let $Q:X\to\C^s$ be a holomorphic $U$-invariant map. Suppose that $Q(x_m)$ are pairwise distinct and form a closed discrete subset of $\C^s$. Given arbitrary $u_m\in U$, there is a complete holomorphic flow whose time-one map $\Phi$ satisfies $\Phi(x_m)=u_mx_m$ for every $m$. This flow preserves $Q$ on all of $X$.
\end{corollary}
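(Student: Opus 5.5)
The plan is to apply Lemma~\ref{lem:groupflow} with $A=U$ and the given invariant map $Q$. Everything then reduces to choosing a holomorphic map $F:\C^s\to\ulie=\Lie(U)$ with $\exp(F(Q(x_m)))=u_m$ for every $m$.

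First, since $U$ is a connected complex unipotent algebraic group, $\exp:\ulie\to U$ is a bijection; it is even an isomorphism of algebraic varieties, with inverse the polynomial logarithm. Put $\xi_m=\log u_m\in\ulie$, so that $\exp(\xi_m)=u_m$.

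Next, the points $z_m=Q(x_m)$ are pairwise distinct and form a closed discrete set $Z\subset\C^s$. Hence the assignment $z_m\mapsto\xi_m$ is a well-defined map $Z\to\ulie$, and this is exactly where distinctness is used. Lemma~\ref{lem:entire}, applied with $E=\ulie$, extends it to a holomorphic map $F:\C^s\to\ulie$ with $F(z_m)=\xi_m$.

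Now define $\Phi_t(x)=\exp\bigl(tF(Q(x))\bigr)x$. By Lemma~\ref{lem:groupflow} this is a complete autonomous holomorphic flow; its generator is the field $W$ of \eqref{eq:variablefield}. The time-one map satisfies
\[
 \Phi_1(x_m)=\exp\bigl(F(z_m)\bigr)x_m=\exp(\xi_m)x_m=u_mx_m .
\]
The proof of Lemma~\ref{lem:groupflow} already gives $Q(\Phi_t(x))=Q(x)$ for all $x\in X$ and all $t$, so the flow preserves $Q$ everywhere.

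There is no real obstacle here. The only point needing care is that the exponential of a unipotent group is globally surjective, which is what makes exact interpolation possible for arbitrary $u_m$. In contrast, a torus requires choosing logarithms pointwise, as in the remark after Lemma~\ref{lem:entire}.
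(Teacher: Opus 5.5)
Your proposal is correct and matches the paper's proof: take $\xi_m=\log u_m$ via the polynomial logarithm on the unipotent group, interpolate $F:\C^s\to\ulie$ with $F(Q(x_m))=\xi_m$ by Lemma~\ref{lem:entire}, and apply Lemma~\ref{lem:groupflow}. Your added remarks, that distinctness of the $Q(x_m)$ makes the data well defined and that $Q$-invariance of the flow comes from the proof of Lemma~\ref{lem:groupflow}, are accurate.
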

\begin{proof}
Put $\ulie=\Lie(U)$. For a complex unipotent group, exponential and logarithm give mutually inverse algebraic maps between $\ulie$ and $U$; these follow from finite matrix series in a unitriangular realization \cite{Borel}. Choose $\xi_m=\log u_m$. Lemma~\ref{lem:entire} gives $F:\C^s\to\ulie$ with $F(Q(x_m))=\xi_m$. Apply Lemma~\ref{lem:groupflow}.
\end{proof}

The dimension of $U$ does not affect the flow count: the whole Lie algebra element $\log u_m$ is interpolated at once.

If $A\simeq(\C^*)^d$ is a connected algebraic torus, its analytic exponential $\mathfrak a\to A$ is surjective. Thus the same interpolation conclusion holds for torus-valued data, provided the invariant map is constant along the whole torus action.

\subsection{Characters and weights}

For a nonzero holomorphic eigenfunction $q$ of a torus $T$, write $\chi$ for the character satisfying
\begin{equation}\label{eq:weightconvention}
 q(tx)=\chi(t)q(x).
\end{equation}
The weight $\lambda$ of $q$ for the action $(t\cdot q)(x)=q(t^{-1}x)$ is therefore $\chi^{-1}$. We use $\lambda,\mu$ in highest-weight calculations and $\chi=\lambda^{-1},\psi=\mu^{-1}$ in formulas for the action on points. In additive notation, $d\chi=-d\lambda$.

\section{Moving sequences to make function values discrete}\label{sec:compact}

To interpolate an entire function at the numbers $q(y_m)$, we need these numbers to be pairwise distinct and closed discrete. If $q$ is regular and a one-parameter unipotent subgroup acts nontrivially on it, we can obtain such points $y_m$ from a sufficiently sparse sequence $x_m$ by applying one element of a compact real form and one unipotent shear. Given nonempty Zariski-open sets $\Omega_m$, we can choose the compact group element and the shear so that $y_m\in\Omega_m$ for every $m$.

\subsection{An auxiliary embedding}
For a connected reductive group $G$ and a reductive subgroup $H$, the quotient $G/H$ is affine by Matsushima's criterion; see \cite{Arzhantsev}. The quotient is irreducible because $G$ is connected. Choose algebra generators of $\C[X]$ and include them in a finite-dimensional $G$-stable subspace $M$. The evaluation map
\[
 X\longrightarrow M^*
\]
is a closed equivariant immersion, since $\operatorname{Sym}M\to\C[X]$ is surjective. Transitivity makes the image a closed orbit. We use this embedding for the norm in Proposition~\ref{prop:position}; the module may be reducible. A positive-dimensional orbit cannot contain zero, which is fixed by $G$.

\subsection{Moving a sequence by a compact group element}

Let $G$ be connected reductive, let $K\subset G$ be a connected compact real form, and let $V$ be a finite-dimensional rational $G$-module with a $K$-invariant Hermitian norm. The following proposition gives a sufficient condition on a linear map $Q:V\to\C^s$ for its values to be distinct and divergent on a compact translate of every sufficiently sparse sequence.

\begin{proposition}\label{prop:position}
Let $X\subset V$ be a positive-dimensional closed $G$-orbit not containing zero, and let $Q:V\to\C^s$ be linear. Assume that, for every $v\ne0$, the function $k\mapsto Q(kv)$ on $K$ is not identically zero. There are strictly increasing thresholds $R_m\to\infty$, with $R_1=0$, having the following property. Given distinct $x_m\in X$ satisfying $\|x_m\|>R_m$ for $m\ge2$, and nonempty Zariski-open sets $\Omega_m\subset X$, one element $k\in K$ satisfies
\begin{equation}\label{eq:position}
 \begin{aligned}
 kx_m&\in\Omega_m,\qquad Q(kx_m)\ne0 &&(m\ge1),\\
 \|Q(kx_m)\|&>m &&(m\ge2),
 \end{aligned}
\end{equation}
and the values $Q(kx_m)$ are pairwise distinct.
\end{proposition}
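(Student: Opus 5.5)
The plan is a Haar-measure argument on $K$. Let $\mu$ be the normalized Haar measure on $K$. We show that, for each $m$, the set of $k\in K$ violating one of the conditions in \eqref{eq:position} at index $m$ (or the distinctness condition for a pair) has small or zero measure, and that these measures sum to less than $1$. Some $k$ then avoids all of them. Two kinds of conditions appear. The conditions $kx_m\in\Omega_m$, $Q(kx_m)\ne0$, and pairwise distinctness each fail on a $\mu$-null set. The divergence condition $\|Q(kx_m)\|>m$ needs a uniform smallness estimate, and it is this estimate that determines the thresholds $R_m$.

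\emph{Null sets.} For $v\ne0$ the map $k\mapsto Q(kv)$ is real-analytic on the connected real-analytic manifold $K$ and not identically zero by hypothesis, so its zero set is $\mu$-null. Taking $v=x_m$ (nonzero, since $0\notin X$) handles $Q(kx_m)\ne0$. Taking $v=x_m-x_l$ with $l\ne m$ handles distinctness, since $Q(kx_m)=Q(kx_l)$ iff $Q(k(x_m-x_l))=0$. For $\Omega_m$, put $Z=X\setminus\Omega_m$. The set $\{g\in G:gx_m\in Z\}$ is a proper Zariski-closed subset of $G$, because $G$ acts transitively on $X$ and $\Omega_m\neq\emptyset$. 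Since $K$ is Zariski dense in $G$ (it is a compact real form), this set does not contain $K$. Its intersection with $K$ is therefore the zero locus of a real-analytic function on $K$ that is not identically zero, hence $\mu$-null. There are countably many such conditions, so their union is null.

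\emph{Uniform estimate (main step).} Let $S$ be the unit sphere of $V$ and, for $\delta>0$, put
\[
 \varepsilon(\delta)=\sup_{v\in S}\mu\{k\in K:\|Q(kv)\|\le\delta\}.
\]
I claim $\varepsilon(\delta)\to0$ as $\delta\to0$; this is the step I expect to be the real obstacle. Suppose not. Then there are $\eta>0$, $\delta_j\to0$, and $v_j\in S$ with $\mu\{\|Q(kv_j)\|\le\delta_j\}\ge\eta$. By compactness of $S$ we may assume $v_j\to v\in S$. Since $K$ is compact and $Q$ is linear, $\|Q(kv_j)-Q(kv)\|\le C\|v_j-v\|$ uniformly in $k$. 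So for each fixed $\delta>0$, these sets lie in $\{\|Q(kv)\|\le\delta\}$ for large $j$, giving $\mu\{\|Q(kv)\|\le\delta\}\ge\eta$ for every $\delta>0$. Letting $\delta\downarrow0$ yields $\mu\{Q(kv)=0\}\ge\eta$, which contradicts the null-set statement above. If a rate were wanted, a Remez/Łojasiewicz-type inequality for the finite-dimensional space of matrix-coefficient functions $k\mapsto Q(kv)$ would give one, but the qualitative limit suffices.

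\emph{Choice of thresholds.} Choose $R_1=0$ and, for $m\ge2$, choose $R_m$ strictly increasing with $R_m\to\infty$ and $\varepsilon(m/R_m)\le2^{-m-1}$; this is possible since $\varepsilon(\delta)\to0$. The norm is $K$-invariant, so for $\|x_m\|>R_m$ we have
\[
 \|Q(kx_m)\|=\|x_m\|\,\|Q(k\,x_m/\|x_m\|)\|.
\]
Hence $\|Q(kx_m)\|\le m$ forces $\|Q(k\,x_m/\|x_m\|)\|\le m/\|x_m\|<m/R_m$, and this occurs on a set of measure at most $2^{-m-1}$. Summing over $m\ge2$, the bad set for divergence has measure at most $1/2$. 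Adding the null sets, the total bad set has measure less than $1$, so some $k\in K$ satisfies all conditions of \eqref{eq:position} and the distinctness requirement simultaneously. The thresholds depend only on $Q$, $K$ and the norm, not on the sequence or on the sets $\Omega_m$, as the statement requires.
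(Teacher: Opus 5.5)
Your proposal is correct and follows the paper's proof essentially step for step: the same uniform small-ball estimate (Lemma~\ref{lem:smallball}) by the same compactness-and-contradiction argument, the same Zariski-density argument for the open-set conditions (Lemma~\ref{lem:generic}), and the same choice of thresholds making the bad sets summable, so that removing countably many null sets leaves a good $k$. The differences are cosmetic. You treat $Q(kx_m)\ne0$ as a null condition for every $m$, whereas the paper does so only for $m=1$. Your scaling identity uses only linearity, not $K$-invariance of the norm. Finally, the set $\{k\in K: kx_m\notin\Omega_m\}$ is \emph{contained in} the zero locus of a single nonvanishing real-analytic function, which need not equal it.
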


The proof uses two measure estimates. The first bounds uniformly the probability that $\|Q(kv)\|$ is small for a unit vector $v$. The second shows that the set of $k$ sending a given point into a proper algebraic subset has Haar measure zero. Together they allow all the growth and open-set conditions to hold for one $k$.

\begin{lemma}\label{lem:smallball}
Let $Q:V\to\C^s$ be linear, and assume that $Q(kv)$ is not identically zero as a function of $k\in K$ for every $v\ne0$. With normalized Haar measure $\nu$ on $K$, set
\[
 \eta(\varepsilon)=\sup_{\|v\|=1}
 \nu\{k\in K:\|Q(kv)\|\le\varepsilon\}.
\]
Then $\eta(\varepsilon)\to0$ as $\varepsilon\downarrow0$.
\end{lemma}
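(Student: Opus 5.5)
The plan is to combine a pointwise statement with compactness of the unit sphere. First, for each fixed $v$ with $\|v\|=1$, the function $f_v(k)=\|Q(kv)\|^2$ is real-analytic on the connected compact Lie group $K$. It is obtained by restricting a polynomial in the matrix coefficients of $V$ to $K$. By hypothesis $f_v$ is not identically zero. A nonzero real-analytic function on a connected real-analytic manifold has zero set of Haar measure zero. Hence $\nu\{k:\|Q(kv)\|\le\varepsilon\}$ decreases to $\nu\{k: Q(kv)=0\}=0$ as $\varepsilon\downarrow0$, by continuity of measure along the nested sets. This gives the statement for each $v$ separately; the real task is uniformity in $v$.

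For uniformity I would argue by contradiction. Suppose there are $\delta>0$, $\varepsilon_j\downarrow0$ and unit vectors $v_j$ with $\nu\{k:\|Q(kv_j)\|\le\varepsilon_j\}\ge\delta$. Passing to a subsequence, we may take $v_j\to v$ with $\|v\|=1$, by compactness of the unit sphere. Since $K$ acts unitarily and $Q$ is linear with some operator norm $C$, we have
\[
 \bigl|\|Q(kv_j)\|-\|Q(kv)\|\bigr|\le C\|v_j-v\|
\]
uniformly in $k$. Therefore $\{k:\|Q(kv_j)\|\le\varepsilon_j\}\subseteq\{k:\|Q(kv)\|\le\varepsilon_j+C\|v_j-v\|\}$. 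The right-hand thresholds tend to zero. For each fixed $\varepsilon'>0$ this gives $\nu\{k:\|Q(kv)\|\le\varepsilon'\}\ge\delta$, and letting $\varepsilon'\downarrow0$ contradicts the pointwise statement at $v$. Equivalently, one can note that $\eta$ is monotone, so it suffices to show $\inf_\varepsilon\eta(\varepsilon)=0$, and the above is exactly that.

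The only step needing care is the measure-zero claim for the zero set of a nonzero real-analytic function on $K$. I would justify it in the standard way. Work in analytic charts, using connectedness of $K$ so that a function vanishing on an open set vanishes identically. Then a nonzero analytic function on a connected open subset of $\mathbb R^N$ has a null zero set, by induction on $N$ with Fubini, or via Łojasiewicz. Haar measure is smooth with positive density in charts, so null sets correspond. Everything else, namely nested-set continuity, compactness of the sphere, and the Lipschitz bound, is routine.
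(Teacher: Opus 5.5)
Your proposal is correct and follows the paper's proof essentially step for step. The pointwise decay comes from the null zero set of a nonzero real-analytic function together with continuity from above, and uniformity comes from a contradiction argument using compactness of the unit sphere and a comparison between $Q(kv_j)$ and $Q(kv)$ that is uniform in $k$; your explicit Lipschitz bound $C\|v_j-v\|$, coming from unitarity of the $K$-action, is just a concrete version of the paper's uniform convergence $\delta_j\to0$.
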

\begin{proof}
For fixed nonzero $v$, at least one component of $Q(kv)$ is a nonzero real-analytic function on the connected real-analytic manifold $K$. The common zero set therefore has Haar measure zero. A nonzero real-analytic function has a measure-zero zero set, as follows locally from the one-variable statement and induction in real coordinate charts. Continuity of measure from above gives
\[
 \nu\{k:\|Q(kv)\|\le\varepsilon\}\longrightarrow0
\]
for fixed $v$.

Suppose the convergence were not uniform on the unit sphere. There would be $\varepsilon_j\downarrow0$, unit vectors $v_j$, and $c>0$ such that the corresponding measures are at least $c$. Pass to a subsequence with $v_j\to v$ and $\|v\|=1$. The functions $Q(kv_j)$ converge uniformly to $Q(kv)$ on $K$. If
\[
 \delta_j=\sup_{k\in K}\|Q(kv_j)-Q(kv)\|,
\]
then $\delta_j\to0$ and
\[
 \{k:\|Q(kv_j)\|\le\varepsilon_j\}
 \subseteq\{k:\|Q(kv)\|\le\varepsilon_j+\delta_j\}.
\]
The measure of the latter set tends to zero, a contradiction.
\end{proof}

\begin{lemma}\label{lem:generic}
Let $X\subset V$ be an affine homogeneous $G$-orbit, let $Y\subsetneq X$ be closed algebraic, and let $x\in X$. Then
\[
 \{k\in K:kx\in Y\}
\]
has Haar measure zero.
\end{lemma}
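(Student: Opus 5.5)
The plan is to reduce the statement to the fact that a nonzero real-analytic function on the connected manifold $K$ has a zero set of Haar measure zero, which was already used in the proof of Lemma~\ref{lem:smallball}. Since $Y\subsetneq X$ is a proper closed algebraic subset of the irreducible variety $X$, there is a regular function $f$ on $X$ (the restriction of a polynomial on $V$) that vanishes on $Y$ but not identically on $X$. Then
\[
 \{k\in K:kx\in Y\}\subseteq\{k\in K:f(kx)=0\},
\]
and $k\mapsto f(kx)$ is real-analytic on $K$. It therefore suffices to show that this function is not identically zero on $K$.

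The key step, and the only real obstacle, is the nonvanishing. Suppose $f(kx)=0$ for every $k\in K$. The map $g\mapsto f(gx)$ is holomorphic on the connected complex group $G$, and it vanishes on $K$. Because $K$ is a compact real form, $\Lie(G)=\Lie(K)\oplus i\Lie(K)$, so $K$ is totally real of maximal dimension in $G$. A holomorphic function vanishing on such a submanifold vanishes on a neighborhood of it, and hence on all of $G$ by the identity theorem and connectedness. An alternative argument is that $K$ is Zariski dense in $G$ and $g\mapsto f(gx)$ is regular. Either way $f$ vanishes on $Gx=X$, which is a contradiction.

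The conclusion follows: the zero set of the nonzero real-analytic function $k\mapsto f(kx)$ has measure zero, by the local one-variable argument in charts recalled in Lemma~\ref{lem:smallball}. So $\{k:kx\in Y\}$ is contained in a null set. The set is also closed, hence measurable, since $Y$ is closed and the action is continuous. The only point needing care is the totally-real/identity-theorem step, which I would state with the explicit decomposition $\g=\Lie(K)\oplus i\Lie(K)$.
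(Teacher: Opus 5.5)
Your proof is correct and follows the paper's argument. You choose a regular $f$ vanishing on $Y$ but not identically on $X$, note that $g\mapsto f(gx)$ is nonzero on $G$ by transitivity, transfer the nonvanishing to $K$, and conclude that the real-analytic zero set is Haar-null. The only difference is that your primary justification for nonvanishing on $K$ is the totally real decomposition $\g=\Lie(K)\oplus i\Lie(K)$ with the identity theorem, whereas the paper cites Zariski density of $K$ in $G$, which you also give as an alternative.
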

\begin{proof}
Choose a regular function $f$ on $X$ that vanishes on $Y$ but not identically on $X$. Such a function exists because $X$ is affine and $Y$ is proper. The regular function $g\mapsto f(gx)$ on $G$ is nonzero by transitivity. A compact real form is Zariski dense in its connected complexification \cite{Borel}, so its restriction to $K$ is not identically zero. This restriction is real analytic, and its zero set has measure zero.
\end{proof}

\begin{proof}[Proof of Proposition~\ref{prop:position}]
Choose $R_m$ recursively so that $R_m>R_{m-1}+1$ and
\[
 \eta(m/R_m)<2^{-m-2}\qquad(m\ge2).
\]
For a sequence as in the statement, linearity gives
\[
 \nu\{k:\|Q(kx_m)\|\le m\}
 \le\eta(m/R_m)<2^{-m-2}.
\]
The union of these sets has measure less than $1/4$. Outside this union the growth condition in \eqref{eq:position} holds simultaneously for all $m\ge2$.

For each $m$, the set of $k$ with $kx_m\notin\Omega_m$ has Haar measure zero by Lemma~\ref{lem:generic}. The set of $k$ with $Q(kx_1)=0$ also has measure zero, by the nonvanishing assumption and $x_1\ne0$. Finally, for $m\ne n$,
\[
 Q(kx_m)-Q(kx_n)=Q(k(x_m-x_n)).
\]
Since $x_m-x_n\ne0$, the set of $k$ with $Q(k(x_m-x_n))=0$ has Haar measure zero. There are countably many such null sets. Their removal leaves a set of positive measure from which to choose $k$. The norms $\|Q(kx_m)\|$ tend to infinity, and the values $Q(kx_m)$ are pairwise distinct. Hence they form a closed discrete subset of $\C^s$.
\end{proof}

\begin{corollary}\label{cor:irreduciblespread}
If $V$ is irreducible and $q\in V^*$ is nonzero, Proposition~\ref{prop:position} applies to the scalar map $Q=q$.
\end{corollary}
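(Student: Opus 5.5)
The plan is to verify the single hypothesis of Proposition~\ref{prop:position} that concerns $Q$: for every nonzero $v\in V$, the function $k\mapsto q(kv)$ on $K$ is not identically zero. The remaining hypotheses, namely a positive-dimensional closed orbit $X$ not containing zero and a $K$-invariant Hermitian norm, are part of the standing setup and involve no assumption on $Q$. Once this nonvanishing condition holds, the corollary follows by quoting Proposition~\ref{prop:position} with $s=1$.

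I will argue by contradiction. Fix $v\ne0$ and suppose $q(kv)=0$ for all $k\in K$. The function $g\mapsto q(gv)$ is a regular function on $G$, since it is a matrix coefficient of the rational module $V$. It vanishes on $K$. A compact real form is Zariski dense in the connected complexification $G$, as already used in the proof of Lemma~\ref{lem:generic}. Hence $q(gv)=0$ for every $g\in G$.

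Next consider the linear span $W=\Span\{gv:g\in G\}$. This is a nonzero $G$-stable subspace of $V$, since it contains $v$. By irreducibility, $W=V$. But $q$ vanishes on every $gv$, so $q$ vanishes on $W=V$, contradicting $q\ne0$. This establishes the hypothesis of Proposition~\ref{prop:position}, and the corollary follows.

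There is no serious obstacle here. The only point needing care is passing from vanishing on $K$ to vanishing on $G$. This can be done either by Zariski density or by holomorphic continuation: $g\mapsto q(gv)$ is holomorphic on the connected group $G$, which is the complexification of $K$, so it is determined by its restriction to the totally real submanifold $K$.
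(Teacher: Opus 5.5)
Your proof is correct and is essentially the paper's argument in dual form. The paper takes the span of the $K$-translates of $q$ in $V^*$, makes it $G$-stable by differentiating and complexifying, and invokes irreducibility of $V^*$; you instead pass from $K$ to $G$ by Zariski density (as in Lemma~\ref{lem:generic}) and apply irreducibility to the $G$-span of $v$ in $V$.
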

\begin{proof}
The span of the $K$-translates of $q$ is a nonzero $K$-invariant complex subspace of $V^*$. Differentiating and complexifying makes it $G$-invariant. Irreducibility gives the whole dual module. Thus no nonzero $v$ is annihilated by every translate.
\end{proof}

\subsection{Making the values of a regular function distinct and divergent}

\begin{lemma}\label{lem:invarianttuple}
Let $G$ be connected reductive, let $V$ be a finite-dimensional rational $G$-module, and let $A\subset G$ be a one-parameter unipotent subgroup. If $q_1,\ldots,q_s$ is a basis of $(V^*)^A$ and $Q=(q_1,\ldots,q_s)$, then $Q$ satisfies the hypothesis of Lemma~\ref{lem:smallball}.
\end{lemma}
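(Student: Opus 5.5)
The plan is to reduce the condition on $K$ to a condition on $G$, and then to use complete reducibility to find an $A$-invariant functional that does not vanish on a given submodule.

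\emph{Step 1 (from $K$ to $G$).} Fix $v\ne0$ and suppose, for contradiction, that $Q(kv)=0$ for all $k\in K$. Each component $g\mapsto q_j(gv)$ is a matrix coefficient of the rational module $V$, hence a regular function on $G$. As in the proof of Lemma~\ref{lem:generic}, $K$ is Zariski dense in $G$ \cite{Borel}. So each $q_j(gv)$ vanishes on all of $G$. Since $q_1,\ldots,q_s$ span $(V^*)^A$, every $A$-invariant linear functional then vanishes on the linear span $W$ of the orbit $Gv$. This $W$ is a nonzero $G$-submodule of $V$, because it contains $v$.

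\emph{Step 2 (complete reducibility).} Since $G$ is reductive, choose a $G$-stable complement $W'$ with $V=W\oplus W'$. Restriction identifies the annihilator $(W')^\perp\subset V^*$ with $W^*$ as $G$-modules, and hence as $A$-modules. Now $W^*$ is a nonzero finite-dimensional rational module for the unipotent group $A\simeq\C$. Its infinitesimal generator acts nilpotently, so by Lie--Kolchin (or simply a kernel vector of a nilpotent operator) there is a nonzero $A$-fixed vector $\phi\in W^*$. Extending $\phi$ by zero on $W'$ gives an element of $(V^*)^A$ that is nonzero on $W$. This contradicts Step~1, and so the hypothesis of Lemma~\ref{lem:smallball} holds for $Q$.

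The only point needing care is Step~2. One must check that the extension by zero is $A$-invariant, and it is, because $W'$ is $G$-stable and therefore $A$-stable. Without a $G$-stable complement, an $A$-invariant functional on $W$ need not extend to an $A$-invariant one on $V$, so reductivity is exactly what is used there. Step~1 is routine once Zariski density of the compact real form is invoked.
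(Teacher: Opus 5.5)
Your proof is correct and is essentially the paper's argument in dual form. The paper decomposes $V^*$ into irreducibles to show that the $G$-translates of $(V^*)^A$, and hence the $K$-translates via the complexified Lie algebra, span $V^*$; you instead pass from $K$ to $G$ by Zariski density and use a $G$-stable complement of the span of $Gv$ to produce an $A$-invariant functional that does not vanish on it. Both arguments rest on the same two facts: complete reducibility, and the existence of a nonzero fixed vector for a unipotent one-parameter group in every nonzero module.
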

\begin{proof}
Choose a direct-sum decomposition of $V^*$ into irreducible submodules. The restriction of each summand to $A\simeq(\C,+)$ has a nonzero fixed vector, since its infinitesimal action is nilpotent. The $G$-span of this vector is the entire summand. Consequently
\[
 \Span\bigl(G\cdot(V^*)^A\bigr)=V^*.
\]
The same is true with $K$ in place of $G$: the complex span of a $K$-orbit is stable under the complexified Lie algebra, and $G$ is connected. If $Q(kv)=0$ for every $k\in K$, then every $K$-translate of every $q_j$ annihilates $v$. Their span is $V^*$, so $v=0$.
\end{proof}

\begin{lemma}\label{lem:prepare}
Let $X$ be a positive-dimensional affine homogeneous space of a connected reductive group $G$. Let $q\in\C[X]$ and let $A=\exp(\C\xi)\subset G$ be a one-parameter unipotent subgroup such that $V_\xi q\not\equiv0$. Fix a closed equivariant embedding $X\subset V$ and a compact real form $K$, with a $K$-invariant Hermitian norm.

There are strictly increasing thresholds $R_m\to\infty$, with $R_1=0$, such that for every sequence of distinct points $x_m$ with $\|x_m\|>R_m$ for $m\ge2$, and every sequence of nonempty Zariski-open sets $\Omega_m\subset X$, there exist $k\in K$ and the time-one map $\Psi$ of one complete holomorphic flow for which
\[
 y_m=\Psi(kx_m)\in\Omega_m,
 \qquad |q(y_m)|>m,
\]
and the values $q(y_m)$ are pairwise distinct. The thresholds are independent of the open sets.
\end{lemma}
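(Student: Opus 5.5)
Take $Q=(q_1,\ldots,q_s)$, a basis of $(V^*)^A$ restricted to $X$. By Lemma~\ref{lem:invarianttuple} it satisfies the hypothesis of Proposition~\ref{prop:position}, which supplies the thresholds $R_m$; they depend only on $Q$, not on the open sets. The plan is to apply Proposition~\ref{prop:position} with suitably shrunk open sets $\Omega'_m\subset\Omega_m$. This gives $k\in K$ such that the values $Q(kx_m)$ are pairwise distinct, form a closed discrete set, and $kx_m\in\Omega'_m$. Corollary~\ref{cor:unipotent}, applied to the $A$-invariant map $Q$ (each $q_j$ is $A$-invariant and the action is linear), then gives a complete flow whose time-one map $\Psi$ sends $kx_m\mapsto\exp(\tau_m\xi)kx_m$ for arbitrary prescribed times $\tau_m\in\C$. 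The whole construction therefore reduces to choosing the $\tau_m$.

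For the choice of $\tau_m$, fix $x$ and consider $f_x(\tau)=q(\exp(\tau\xi)x)$. Since $A$ is unipotent acting algebraically on the affine $X$, $f_x$ is a polynomial in $\tau$. Its derivative at $\tau=0$ is $(V_\xi q)(x)$, so $f_x$ is nonconstant whenever $(V_\xi q)(x)\neq0$. Accordingly, let $\Omega'_m=\Omega_m\cap\{V_\xi q\neq0\}$, which is nonempty and Zariski open because $X$ is irreducible and $V_\xi q\not\equiv0$. For $z_m=kx_m\in\Omega'_m$, the nonconstant polynomial $f_{z_m}$ is unbounded, so $|f_{z_m}(\tau)|>m$ for all $\tau$ outside a compact set.

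We also need $\exp(\tau_m\xi)z_m\in\Omega_m$. The set of $\tau$ with $\exp(\tau\xi)z_m\notin\Omega_m$ is Zariski closed in $\C$, and it is proper since it misses $\tau=0$. Hence it is finite. We then choose the $\tau_m$ recursively: avoid this finite set, avoid the compact set where $|f_{z_m}|\le m$, and avoid the finitely many $\tau$ with $f_{z_m}(\tau)\in\{q(y_1),\ldots,q(y_{m-1})\}$. That last set is finite because $f_{z_m}$ is nonconstant. Together these choices give $y_m\in\Omega_m$, $|q(y_m)|>m$, and pairwise distinct values $q(y_m)$.

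The main obstacle is ensuring that the interpolation of $\tau_m$ in Corollary~\ref{cor:unipotent} is legitimate. That requires the points $Q(kx_m)$ to be distinct and closed discrete in $\C^s$, which is exactly what Proposition~\ref{prop:position} provides (divergence $\|Q(kx_m)\|>m$ and distinctness). A second requirement is that $Q\not\equiv0$ on the relevant orbits, which is the nonvanishing hypothesis, verified by Lemma~\ref{lem:invarianttuple}. Everything else is a finite avoidance argument carried out one point at a time.
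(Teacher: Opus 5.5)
Your proposal is correct and follows the paper's proof essentially step for step. Like the paper, you shrink $\Omega_m$ by intersecting with $\{V_\xi q\neq 0\}$ and apply Proposition~\ref{prop:position} to the $A$-invariant tuple from Lemma~\ref{lem:invarianttuple}. You then choose the shear times recursively by the same finite-avoidance argument and interpolate them at $Q(kx_m)$ to get one complete shear. Two differences are cosmetic: the paper uses a principal open set $D(h_m)\subset\Omega_m$ where you argue that the preimage of $X\setminus\Omega_m$ under the polynomial orbit map is finite, and you cite Corollary~\ref{cor:unipotent} where the paper applies the general flow lemma directly. The only point you leave implicit is that $0\notin X$, which Proposition~\ref{prop:position} requires; it holds because $0$ is $G$-fixed and $X$ is a positive-dimensional orbit.
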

\begin{proof}
Use the invariant tuple $Q$ of Lemma~\ref{lem:invarianttuple}. Choose a nonzero regular function $h_m$ such that the principal open set $D(h_m)=\{h_m\ne0\}$ is nonempty and contained in $\Omega_m$. Since $X$ is irreducible and $V_\xi q$ is a nonzero regular function,
\[
 D(h_m)\cap\{V_\xi q\ne0\}
\]
is nonempty. The positive-dimensional orbit $X$ does not contain zero, so Proposition~\ref{prop:position} applies. With these open sets it gives $k\in K$ such that, writing $z_m=kx_m$, the values $Q(z_m)$ are pairwise distinct and closed discrete, and
\[
 h_m(z_m)\ne0,\qquad V_\xi q(z_m)\ne0.
\]

For each $m$ set
\[
 P_m(t)=q(\exp(t\xi)z_m),\qquad
 J_m(t)=h_m(\exp(t\xi)z_m).
\]
Both are polynomials: the $A$-action is algebraic and $q,h_m$ are regular. Moreover,
\[
 P'_m(0)=V_\xi q(z_m)\ne0,\qquad J_m(0)\ne0.
\]
Thus $P_m$ is nonconstant and $J_m$ has only finitely many zeros. Choose $t_m$ recursively so that
\[
 J_m(t_m)\ne0,\qquad |P_m(t_m)|>m,
 \qquad P_m(t_m)\notin\{P_j(t_j):j<m\}.
\]
This is possible because a nonconstant polynomial tends to infinity in modulus as $|t|\to\infty$, and each excluded finite value has finitely many preimages.

Entire interpolation gives $F:\C^s\to\C$ with $F(Q(z_m))=t_m$. The map
\[
 \Psi_\tau(x)=\exp\bigl(\tau F(Q(x))\xi\bigr)x
\]
is complete by Lemma~\ref{lem:groupflow}, because $Q$ is $A$-invariant. Its time-one map sends $z_m$ to $\exp(t_m\xi)z_m$. Since $J_m(t_m)\ne0$, these images lie in $\Omega_m$. Their $q$-values are $P_m(t_m)$, which are pairwise distinct and have modulus greater than $m$. The thresholds are those of Proposition~\ref{prop:position}.
\end{proof}

\section{Normalization with two solvable subgroups}\label{sec:solvable}

In this section $G$ is semisimple, acts effectively on $X=G/H$, and satisfies
\begin{equation}\label{eq:principalcase}
 \rk G\ge2,\qquad \dim H>0.
\end{equation}
The subgroup $H$ is closed and reductive; no connectedness assumption is made in this section. Section~\ref{sec:cases} treats the remaining cases needed for Theorem~\ref{thm:main}.

We construct connected solvable subgroups $S_1,S_2$ fixing functions $q,r$, respectively, and points $p_m$ with $q(p_m)=r(p_m)=m$. For each $m$, the set of products $a_mb_mp_m$, with $a_m\in S_1$ and $b_m\in S_2$, contains a nonempty open subset $\Omega_m$ of $X$. Lemma~\ref{lem:prepare} moves any sufficiently sparse sequence into these sets and makes its $q$-values distinct and divergent. Once $y_m=a_mb_mp_m$, we factor $a_m^{-1}$ and $b_m^{-1}$ into torus and unipotent elements and interpolate logarithms of these factors, first at $q(y_m)$, then at $r(b_mp_m)=m$.

Fix a Borel subgroup $B=TU$, where $T$ is a maximal torus and $U$ is the unipotent radical of $B$. Write $\g=\Lie(G)$, $\h=\Lie(H)$, $\tlie=\Lie(T)$, and $\ulie=\Lie(U)$. Let $\Phi$ be the root system of $(G,T)$, with positive roots $\Phi^+$ corresponding to $B$, and put $\Phi^-=-\Phi^+$ and $\ulie_-=\bigoplus_{\gamma\in\Phi^-}\g_\gamma$. Decompose $\g$ into its simple ideals,
\[
 \g=\bigoplus_{j=1}^a\g_j.
\]
As in Section~\ref{sec:flows}, $\lambda,\mu$ denote weights for the usual action on functions. We write them additively in weight identities and use $d\lambda,d\mu$ for their differentials.

\begin{lemma}\label{lem:fullsupport}
There is a nonzero regular function $q_0\in\C[X]^U$ of dominant weight $\lambda$ nonzero on each simple factor. For some negative simple-root vector $f_\alpha$, one has $V_{f_\alpha}q_0\not\equiv0$.
\end{lemma}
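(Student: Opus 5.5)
The plan is to use the structure of $\C[X]$ as a $G$-module. Since $X=G/H$ is affine with $H$ reductive, $\C[X]=\bigoplus_{\lambda}V(\lambda)^{*}\otimes V(\lambda)^{H}$ decomposes into isotypic components. Each irreducible $G$-submodule $W\subset\C[X]$ contains a $U$-invariant highest weight vector, unique up to scalar, whose weight $\lambda$ is dominant. The weight convention is the one of Section~\ref{sec:flows}: the action on functions is $(g\cdot f)(x)=f(g^{-1}x)$, and one may have to switch between $B$ and its opposite. The highest weight vectors of these summands multiply. If $f_1\in W_1^U$ and $f_2\in W_2^U$ have weights $\lambda_1$ and $\lambda_2$, then $f_1f_2\in\C[X]^U$ has weight $\lambda_1+\lambda_2$, and $f_1f_2\ne0$ because $X$ is irreducible, so $\C[X]$ is a domain. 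Dominant weights form a monoid. Hence it suffices to find, for each simple factor $\g_j$, a nonzero $U$-invariant $f_j$ whose dominant weight has nonzero restriction to the Cartan subalgebra of $\g_j$. The product $q_0=\prod_j f_j$ then has weight $\sum_j\lambda_j$. The restriction of this weight to the $j$-th factor is a sum of dominant weights of $\g_j$, one of which is nonzero, so the restriction is nonzero because dominant weights form a pointed cone.

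To produce $f_j$, I argue by contradiction. Suppose every irreducible submodule of $\C[X]$ has highest weight trivial on $\g_j$. Then the corresponding simple normal subgroup $G_j$ acts trivially on every such submodule. Since $\C[X]$ is the sum of these submodules, $G_j$ acts trivially on $\C[X]$, and hence trivially on $X$ because regular functions separate points of the affine variety $X$. This contradicts effectiveness: the action is effective by the standing assumption of this section, and $G_j$ is positive-dimensional. In the setting without effectiveness one would instead restrict to the factors acting nontrivially. So for each $j$ some isotypic component has $\lambda|_{\g_j}\ne0$, and we take $f_j$ to be a highest weight vector there.

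For the second claim, note that $q_0$ is a highest weight vector of weight $\lambda\ne0$ in an irreducible module, or at least spans a $B$-eigenline inside the $G$-module it generates. If $V_{f_\alpha}q_0=0$ for every simple root $\alpha$, then $q_0$ would be annihilated by both $\ulie$ and all negative simple root vectors. The negative simple root vectors together with $\ulie$ generate $\g$ as a Lie algebra. Thus $q_0$ would be annihilated by $\g$, hence fixed by the connected group $G$, which forces its $T$-weight to be $0$. This contradicts $\lambda\ne0$. More concretely, $\lambda$ is nonzero and dominant, so $\langle\lambda,\alpha^\vee\rangle>0$ for some simple $\alpha$. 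Then $\mathfrak{sl}_2$-theory gives $f_\alpha\cdot q_0\ne0$, and $V_{f_\alpha}q_0=-f_\alpha\cdot q_0\ne0$.

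The main obstacle I expect is bookkeeping rather than substance. One must check that the sign and weight conventions make the highest weight vector $U$-invariant rather than $U_-$-invariant, and that this weight is dominant with respect to $\Phi^+$. One must also check that the product of $U$-invariants from different isotypic components is still a $U$-invariant eigenfunction with the summed weight. Both facts are standard, but the dual action on functions can flip $B$ to its opposite, so I would fix the convention $\xi\cdot f=-V_\xi f$ at the outset and verify dominance directly.
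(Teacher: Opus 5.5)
Your proposal is correct and follows essentially the same route as the paper. Effectivity forces each simple ideal to act nontrivially on $\C[X]$, which yields highest-weight functions $q_j$. Their product $q_0$ is nonzero because $\C[X]$ is a domain, and its weight is nonzero on every factor because dominant weights cannot cancel. The $\mathfrak{sl}_2$ identity $e_\alpha\cdot(f_\alpha\cdot q_0)=\langle\lambda,\alpha^\vee\rangle q_0$ then gives $V_{f_\alpha}q_0\not\equiv0$. Your concern about switching to the opposite Borel subgroup is unnecessary: with the left action $(g\cdot f)(x)=f(g^{-1}x)$, a $B$-highest-weight vector is literally $U$-invariant and has $\Phi^+$-dominant weight.
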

\begin{proof}
Each simple ideal acts nontrivially on $\C[X]$. Otherwise its connected normal subgroup would fix every regular function, hence every point of the affine variety, contrary to effectivity. The action on $\C[X]$ is locally finite. For each $\g_j$, choose a finite-dimensional irreducible submodule on which $\g_j$ acts nontrivially, and take a nonzero highest-weight function $q_j$ in it. Its dominant highest weight $\lambda_j$ has nonzero component on $\g_j$.

The product
\[
 q_0=\prod_{j=1}^a q_j
\]
is nonzero because $\C[X]$ is a domain. It is $U$-fixed and has dominant weight $\lambda=\sum_j\lambda_j$. Dominant components cannot cancel, so $\lambda$ is nonzero on every factor. In particular $q_0$ is nonconstant. It lies in a finite-dimensional rational submodule by local finiteness.

Choose a simple root $\alpha$ with $c=\langle\lambda,\alpha^\vee\rangle>0$. For an $\mathfrak{sl}_2$-triple $(e_\alpha,f_\alpha,h_\alpha)$, the usual action satisfies
\[
 e_\alpha\cdot(f_\alpha\cdot q_0)
 =[e_\alpha,f_\alpha]\cdot q_0
 =c q_0\ne0,
\]
since $e_\alpha\cdot q_0=0$. Therefore $f_\alpha\cdot q_0\ne0$, equivalently $V_{f_\alpha}q_0\ne0$.
\end{proof}

\begin{lemma}\label{lem:transverse}
With $q_0$ and $\lambda$ as above, there exist a Weyl element $w$, a representative $n_w\in N_G(T)$, and connected solvable subgroups
\[
 S_1=U(\ker\lambda)^\circ,
 \qquad S_2=n_wS_1n_w^{-1},
\]
with the following properties. The weights $\lambda$ and $\mu=w\lambda$ are linearly independent. The function $r_0=n_w\cdot q_0$ is fixed by $S_2$ and has weight $\mu$. The locus
\begin{equation}\label{eq:transverselocus}
 \mathcal U=\{x\in X:\salg_1+\salg_2+\g_x=\g\}
\end{equation}
is a nonempty Zariski-open subset of $X$, where $\salg_i=\Lie(S_i)$ and $\g_x$ is the stabilizer Lie algebra at $x$.
\end{lemma}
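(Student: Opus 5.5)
The plan is to compute $\salg_1+\salg_2$ explicitly for a well-chosen Weyl element $w$. It will turn out to have codimension at most one in $\g$, and the stabilizer algebras $\g_x$ will then cover the missing direction for generic $x$.

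\emph{The formal properties.} First, $S_1=U(\ker\lambda)^\circ$ is a connected solvable group because $T$ normalizes $U$. Its Lie algebra is
\[
\salg_1=\ulie\oplus\ker d\lambda.
\]
The group $S_1$ fixes $q_0$: the group $U$ fixes it by Lemma~\ref{lem:fullsupport}, and $(\ker\lambda)^\circ$ acts on $q_0$ through the trivial character. Conjugation then shows that $S_2=n_wS_1n_w^{-1}$ fixes $r_0=n_w\cdot q_0$. For $t\in T$ we have $t\cdot(n_w\cdot q_0)=n_w\cdot\bigl((n_w^{-1}tn_w)\cdot q_0\bigr)$, so $r_0$ has weight $\mu=w\lambda$. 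Likewise
\[
\salg_2=\Ad(n_w)\ulie\oplus\ker d\mu=\bigoplus_{\gamma\in w\Phi^+}\g_\gamma\oplus\ker d\mu .
\]
The locus $\mathcal U$ is Zariski open: $x\mapsto\g_x$ is algebraic of constant dimension by homogeneity, and the condition is a maximal-rank condition. The real content is therefore the choice of $w$ and the nonemptiness of $\mathcal U$.

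\emph{Choice of $w$.} Let $w_0$ be the longest element, so $w_0\Phi^+=\Phi^-$.

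If $w_0\lambda$ is not proportional to $\lambda$, take $w=w_0$. Then $\salg_1+\salg_2\supseteq\ulie+\ulie_-$. Also $\ker d\lambda+\ker d\mu=\tlie$, since these are distinct hyperplanes when $\lambda,\mu$ are independent. Hence $\salg_1+\salg_2=\g$ and $\mathcal U=X$.

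Otherwise $w_0\lambda=-\lambda$, since $w_0$ preserves the invariant form. Let $\mathfrak m\subset\g$ be the ideal spanned by $\Ad(G)\h$. It is nonzero because $\dim H>0$, so it is a sum of simple ideals. Pick one simple factor $\g_j\subseteq\mathfrak m$. Since $\lambda$ is dominant and nonzero on $\g_j$, there is a simple root $\alpha$ of $\g_j$ with $c=\langle\lambda,\alpha^\vee\rangle>0$. Put $w=w_0s_\alpha$. Then
\[
\mu=w_0(\lambda-c\alpha)=-\lambda-c\,w_0\alpha .
\]
This is independent of $\lambda$ because $w_0\alpha\ne0$ and $\lambda\not\equiv0$ while $c\ne0$; if $\mu$ were a multiple $a\lambda$, then $w_0\alpha$ would be a multiple of $\lambda$, which would force $\lambda$ to be a multiple of $\alpha$. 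I will rule that out with a small side check: the case $\lambda$ proportional to $\alpha$ cannot occur in rank $\ge2$ with $\lambda$ dominant and nonzero on every factor, except that one must check it; if needed, $\alpha$ is replaced by another simple root.

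Next, $s_\alpha\Phi^+=(\Phi^+\setminus\{\alpha\})\cup\{-\alpha\}$. Hence $w\Phi^+$ consists of $\Phi^-\setminus\{w_0\alpha\}$ together with $-w_0\alpha$. Writing $\alpha'=-w_0\alpha$, which is a simple root of the same factor $\g_j$, we get
\[
W:=\salg_1+\salg_2=\tlie\oplus\bigoplus_{\gamma\ne-\alpha'}\g_\gamma .
\]
This is a hyperplane missing only $\g_{-\alpha'}\subset\g_j$.

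\emph{Nonemptiness of $\mathcal U$ (main obstacle).} Since $\g_{gH}=\Ad(g)\h$, nonemptiness means there is some $g$ with $\Ad(g)\h\not\subseteq W$. Suppose instead that $\Ad(g)\h\subseteq W$ for all $g$. Then $\mathfrak m\subseteq W$, so $\g_j\subseteq W$. But $\g_{-\alpha'}\subseteq\g_j$ and $\g_{-\alpha'}\not\subseteq W$, a contradiction.

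The delicate points are choosing $\alpha$ inside a factor contained in $\mathfrak m$, and confirming that $\mu$ is independent of $\lambda$. For the latter, if $\lambda\in\C\alpha$ happens for the chosen $\alpha$, I would first try another simple root of $\g_j$ pairing positively with $\lambda$. If $\lambda$ is supported on $\alpha$ alone, I would use rank $\ge2$, through a neighbouring root or a second factor, to pick $w$ with $w\lambda\ne\pm\lambda$ while keeping $W$ of codimension one.
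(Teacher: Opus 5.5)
Your proof follows the paper's route: the same split according to whether $w_0\lambda=-\lambda$, the same choice $w=w_0s_\alpha$ with $\alpha$ a simple root in a factor $\g_j$ reached by the conjugates of $\h$, the same description of $\salg_1+\salg_2$ as the hyperplane omitting $\g_{-\beta}$ with $\beta=-w_0\alpha$, and the same ideal argument producing $g$ with $\Ad(g)\h\not\subseteq\salg_1+\salg_2$.

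The step you leave open carries real weight. It is the independence of $\lambda$ and $\mu=-\lambda+c\beta$, that is, $\lambda\notin\C\beta$ (equivalently $\lambda\notin\C\alpha$ here). Without it, $\ker d\lambda=\ker d\mu$ and $\salg_1+\salg_2$ has codimension two. Then a single vector of $\Ad(g)\h$ outside this sum no longer gives $\mathcal U\ne\emptyset$. Your fallback (``pick $w$ with $w\lambda\ne\pm\lambda$ while keeping codimension one'') is not an argument.

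In fact the bad case never occurs, and the paper proves this directly. If $\g$ has at least two simple factors, $\lambda$ is nonzero on every factor by Lemma~\ref{lem:fullsupport}, while $\beta$ is supported on one factor. If $\g$ is simple of rank at least two, connectedness of the Dynkin diagram gives a simple root $\gamma\ne\beta$ with $\langle\beta,\gamma^\vee\rangle<0$, so no positive multiple of $\beta$ is dominant. Negative multiples are excluded by $\langle-\beta,\beta^\vee\rangle=-2$. Since $\lambda$ is dominant and nonzero, it is not proportional to $\beta$.

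A smaller omission is in your case split. ``$w_0$ preserves the invariant form'' only gives $w_0\lambda=\pm\lambda$ when the two weights are proportional (the paper uses $w_0^2=1$ for this). You still need to exclude $w_0\lambda=\lambda$. That holds because a nonzero dominant $\lambda$ would then also be antidominant.

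With these two short insertions your argument is complete and coincides with the paper's.
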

\begin{proof}
The subgroup $S_1$ fixes $q_0$ because $U$ does and $\lambda$ is trivial on $(\ker\lambda)^\circ$. Conjugation then shows that $S_2$ fixes $r_0$. Both subgroups are normalized by $T$. Let $w_0$ be the longest Weyl element.

We begin with $w_0\lambda=-\lambda$. For an irreducible root system, $w_0=-1$ in types $A_1$, $B_\ell$, $C_\ell$, $D_{2\ell}$, $E_7$, $E_8$, $F_4$, and $G_2$; see \cite{Humphreys}. Thus this case includes every weight in these types, as well as the self-dual highest weights in the remaining types. We will choose $w=w_0s_\alpha$ so that $\lambda,w\lambda$ are independent and the sum of the two solvable Lie algebras omits just one root space. A conjugate of $\h$ will have a nonzero component in that space.

Choose a simple ideal $\g_j$ on which $\h$ has nonzero projection. Such an ideal exists because $\dim H>0$. Since $\lambda$ is nonzero on this factor, there is a simple root $\alpha$ in it with
\[
 c=\langle\lambda,\alpha^\vee\rangle>0.
\]
Set
\begin{equation}\label{eq:weylchoice}
 w=w_0s_\alpha,\qquad \beta=-w_0\alpha.
\end{equation}
Then $\beta$ is simple positive, and
\begin{equation}\label{eq:weightshift}
 \mu=w\lambda=-\lambda+c\beta.
\end{equation}
The weight $\lambda$ is not proportional to $\beta$. If there is more than one simple factor, $\lambda$ is nonzero on every factor, while $\beta$ is supported in just one. If there is one factor, it has rank at least two. A positive multiple of its simple root $\beta$ pairs negatively with the coroot of a neighboring simple root, while a negative multiple pairs negatively with $\beta^\vee$. Neither is dominant. Equation~\eqref{eq:weightshift} therefore makes $\lambda$ and $\mu$ independent.

A simple reflection preserves all positive roots except its own, which it sends to its negative. Consequently
\[
 w(\Phi^+)=\bigl(\Phi^-\setminus\{-\beta\}\bigr)\cup\{\beta\}.
\]
The Lie algebras of the two solvable subgroups are
\[
 \salg_1=\ulie\oplus\ker d\lambda,\qquad
 \salg_2=\Ad(n_w)\ulie\oplus\ker d\mu.
\]
Since $\lambda$ and $\mu$ are independent, their differentials have distinct kernels, and
\[
 \ker d\lambda+\ker d\mu=\tlie.
\]
Combining this equality with the root-set identity above gives
\begin{equation}\label{eq:hyperplane}
 \mathfrak m:=\salg_1+\salg_2
 =\tlie\oplus\bigoplus_{\gamma\in\Phi,\ \gamma\ne-\beta}\g_\gamma.
\end{equation}
The vector subspace $\mathfrak m$ has codimension one.

Choose $Z\in\h$ with nonzero projection to $\g_j$. The span of $\Ad(G)Z$ is an ideal containing $\g_j$. Indeed, brackets with $\g_j$ yield a nonzero subspace of $\g_j$, and simplicity then gives the entire ideal. Thus the $(-\beta)$-root coefficient cannot vanish on every $\Ad(g)Z$. For some $g\in G$, the stabilizer algebra $\Ad(g)\h$ has a vector outside $\mathfrak m$, and
\[
 \mathfrak m+\g_{gH}=\g.
\]
It remains to consider $w_0\lambda\ne-\lambda$. In this case $\lambda$ and $w_0\lambda$ are independent: dependence and $w_0^2=1$ would imply $w_0\lambda=\pm\lambda$, while $w_0\lambda=\lambda$ would make the nonzero weight $\lambda$ both dominant and antidominant. Take $w=w_0$. Then $\salg_1=\ulie\oplus\ker d\lambda$ and $\salg_2=\ulie_-\oplus\ker d(w_0\lambda)$. The two Cartan kernels span $\tlie$, and the radicals contain all root spaces, so $\mathfrak m:=\salg_1+\salg_2=\g$ and $\mathcal U=X$.

Finally, $\mathcal U$ is the surjectivity locus of the algebraic bundle map $\mathfrak m\otimes\mathcal O_X\to TX$ that evaluates action fields. Surjectivity is an open rank condition. In the first case the point $gH$ constructed above belongs to $\mathcal U$; in the second case $\mathcal U=X$. Thus the open set is nonempty in both cases.
\end{proof}

For example, on $\SL_3/\{\diag(t,t^{-1},1)\}$ the construction gives
\[
 \salg_1+\salg_2=\{Z\in\mathfrak{sl}_3:Z_{32}=0\}.
\]
A conjugate of $\Lie(H)$ contains a vector with nonzero $(3,2)$ entry, so its sum with $\salg_1+\salg_2$ is $\mathfrak{sl}_3$. Section~\ref{sec:cstar} gives the matrices and the resulting flow maps.

Put $\chi=\lambda^{-1}$ and $\psi=\mu^{-1}$. These are the characters of $q_0,r_0$ in \eqref{eq:weightconvention}, and their differentials are independent.

\begin{proposition}\label{prop:model}
There are regular eigenfunctions $q,r$, a point $o\in X$, and a sequence $p_m\in T\cdot o$ such that $q(p_m)=r(p_m)=m$ and every map
\begin{equation}\label{eq:productmap}
 \vartheta_m:S_1\times S_2\longrightarrow X,
 \qquad (a,b)\longmapsto abp_m
\end{equation}
is dominant. Its image contains a nonempty Zariski-open subset $\Omega_m\subset X$. The sequence $\{p_m\}$ is closed discrete.
\end{proposition}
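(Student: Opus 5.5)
Take $q_0,r_0$ from Lemmas~\ref{lem:fullsupport} and~\ref{lem:transverse}, with characters $\chi,\psi$ whose differentials are independent. The plan has four steps.

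\textbf{Step 1: the base point.} Choose $o$ in the nonempty open set $\mathcal U$ of \eqref{eq:transverselocus} with $q_0(o)\ne0$ and $r_0(o)\ne0$. This is possible because $X$ is irreducible and $q_0,r_0$ are nonzero regular functions. Both $\mathcal U$ and $\{q_0r_0\ne0\}$ are $T$-stable: $T$ normalizes $S_1$ and $S_2$, so $\salg_1+\salg_2$ is $\Ad(T)$-stable and $\g_{to}=\Ad(t)\g_o$, while $q_0,r_0$ are $T$-eigenfunctions. Hence the whole orbit $T\cdot o$ lies in $\mathcal U\cap\{q_0r_0\ne0\}$.

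\textbf{Step 2: normalizing $q,r$ and choosing $p_m$.} Put $q=q_0/q_0(o)$ and $r=r_0/r_0(o)$, so $q(to)=\chi(t)$ and $r(to)=\psi(t)$. Since $d\chi$ and $d\psi$ are independent, the homomorphism $(\chi,\psi):T\to(\C^*)^2$ has surjective differential, so it is surjective; its image is a connected algebraic subgroup of full dimension. Choose $t_m$ with $\chi(t_m)=\psi(t_m)=m$ and set $p_m=t_mo$. Then $q(p_m)=r(p_m)=m$. The set $\{p_m\}$ is closed discrete because the continuous function $q$ takes the values $m\to\infty$ on it: any compact set meets only finitely many $p_m$, and the points are distinct since their $q$-values are.

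\textbf{Step 3: dominance of $\vartheta_m$.} At $(1,1)$ the differential of $\vartheta_m$ is $(\xi,\zeta)\mapsto V_\xi(p_m)+V_\zeta(p_m)$ with $\xi\in\salg_1$ and $\zeta\in\salg_2$. Its image is the image of $\salg_1+\salg_2$ under the orbit map $\g\to T_{p_m}X$, whose kernel is $\g_{p_m}$. Because $p_m\in\mathcal U$, this differential is surjective. A morphism of irreducible varieties with surjective differential at some point is dominant: in characteristic zero its image is constructible with nonempty interior, and its closure has dimension $\dim X$. Chevalley's theorem then gives a nonempty Zariski-open $\Omega_m$ contained in the image.

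\textbf{Main obstacle.} The delicate point is the $T$-stability of $\mathcal U$ in Step~1. It rests on $T$ normalizing both $S_1$ and $S_2$. For $S_1$ this is clear. For $S_2=n_wS_1n_w^{-1}$ it holds because $n_w$ normalizes $T$, so $T=n_wTn_w^{-1}$ normalizes $S_2$. Surjectivity of $(\chi,\psi)$ in Step~2 is the only other nontrivial point, and independence of the differentials settles it.
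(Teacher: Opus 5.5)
Your proposal is correct and follows the paper's proof step by step: the same choice of $o\in\mathcal U\cap\{q_0r_0\ne0\}$, the same normalization of $q_0,r_0$, the same $T$-stability of $\salg_1+\salg_2$ to transport the transversality from $o$ to $p_m$, and the same dominance-plus-Chevalley conclusion. The only difference is in how you choose $t_m$. You use surjectivity of $(\chi,\psi)\colon T\to(\C^*)^2$, whereas the paper takes the explicit one-parameter choice $t_m=\exp((\log m)C)$ with $d\chi(C)=d\psi(C)=1$; this is an inessential variation.
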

\begin{proof}
Choose
\[
 o\in\mathcal U\cap\{q_0r_0\ne0\}.
\]
This intersection is nonempty because $q_0r_0$ is nonzero and $X$ is irreducible. Set $q=q_0/q_0(o)$ and $r=r_0/r_0(o)$. Rescaling preserves the weights and the subgroups fixing the functions.

Since $d\chi,d\psi$ are independent, choose $C\in\tlie$ with
\[
 d\chi(C)=d\psi(C)=1,
 \qquad t_m=\exp((\log m)C),\qquad p_m=t_mo.
\]
On points,
\[
 q(tx)=\chi(t)q(x),\qquad r(tx)=\psi(t)r(x),
\]
so $q(p_m)=r(p_m)=m$. These equalities show that the points are distinct and have no accumulation point in $X$.

Because $\mathfrak m=\salg_1+\salg_2$ is $T$-stable and $\g_{p_m}=\Ad(t_m)\g_o$,
\[
 \mathfrak m+\g_{p_m}=\Ad(t_m)(\mathfrak m+\g_o)=\g.
\]
The differential of \eqref{eq:productmap} at $(1,1)$ is
\[
 (\xi,\eta)\longmapsto V_\xi(p_m)+V_\eta(p_m).
\]
Its image is $(\mathfrak m+\g_{p_m})/\g_{p_m}=T_{p_m}X$. Thus the map is dominant. Chevalley's theorem makes its image constructible, and a dense constructible subset of an irreducible variety contains a nonempty open subset \cite{Stacks}. Choose one such $\Omega_m$ for each $m$.
\end{proof}

\subsection{Realizing the factors by complete flows}

\begin{lemma}\label{lem:solvable}
Let $S=UT_0$ be a connected solvable algebraic subgroup, where $U$ is unipotent, $T_0$ is a connected torus, and $T_0$ normalizes $U$. Suppose that $S$ fixes a holomorphic function $f$ on $X$. Let $x_m\in X$ have pairwise distinct values $f(x_m)$ forming a closed discrete subset of $\C$. For arbitrary $s_m\in S$, there exist complete holomorphic flow maps $\Phi_1,\Phi_2$ such that
\[
 (\Phi_2\circ\Phi_1)(x_m)=s_mx_m\qquad(m\ge1).
\]
Each flow preserves $f$ on all of $X$.
\end{lemma}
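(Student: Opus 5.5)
Write each $s_m=u_m\tau_m$ with $u_m\in U$ and $\tau_m\in T_0$. This decomposition exists because $S=UT_0$; since $T_0$ normalizes $U$, the order of the factors may be chosen freely. We realize $\tau_m$ first and $u_m$ second, so that $(\Phi_2\circ\Phi_1)(x_m)=u_m\tau_mx_m=s_mx_m$.

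\emph{First flow (torus factor).} The torus $T_0\subset S$ fixes $f$, so $f$ is $T_0$-invariant. The analytic exponential $\tlie_0=\Lie(T_0)\to T_0$ is surjective, so choose $\zeta_m\in\tlie_0$ with $\exp\zeta_m=\tau_m$. Lemma~\ref{lem:entire}, applied to the closed discrete set $\{f(x_m)\}\subset\C$ with distinct values, gives an entire map $F_1:\C\to\tlie_0$ with $F_1(f(x_m))=\zeta_m$. By Lemma~\ref{lem:groupflow} with $A=T_0$ and $Q=f$, the formula $\Phi_1(x)=\exp(F_1(f(x)))x$ is the time-one map of a complete flow that preserves $f$, and $\Phi_1(x_m)=\tau_mx_m$.

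\emph{Second flow (unipotent factor).} Set $x_m'=\tau_mx_m$. Because $f$ is $S$-invariant, $f(x_m')=f(x_m)$, so these values are still pairwise distinct and closed discrete. The function $f$ is $U$-invariant, so Corollary~\ref{cor:unipotent}, applied with $Q=f$ and targets $u_m$, gives a complete flow preserving $f$ whose time-one map $\Phi_2$ satisfies $\Phi_2(x_m')=u_mx_m'$. Composing the two flows gives the claim.

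The only delicate point is that the first step must not spoil the distinctness and discreteness of the $f$-values at the intermediate points. This is guaranteed because the whole of $S$, not merely $U$, fixes $f$. The same fact ensures that each one-parameter family $\exp(tF(f(x)))$ is a genuine flow, since the interpolated Lie algebra element is constant along orbits.
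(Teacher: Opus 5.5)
Your proof is correct and matches the paper's argument: factor $s_m$ into a unipotent and a torus part, and realize the torus factor first by interpolating a logarithm at $f(x_m)$. Then realize the unipotent factor by interpolating at the unchanged values $f(\tau_m x_m)=f(x_m)$, with completeness and preservation of $f$ coming from Lemma~\ref{lem:groupflow}.
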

\begin{proof}
Write $s_m=u_ma_m$ with $u_m\in U$ and $a_m\in T_0$. Choose logarithms $A_m\in\Lie(T_0)$ with $\exp(A_m)=a_m$, and $B_m\in\Lie(U)$ with $\exp(B_m)=u_m$. Torus exponentials are analytically surjective; for a unipotent group exponential and logarithm are polynomial inverses. Choose entire maps $A:\C\to\Lie(T_0)$ and $B:\C\to\Lie(U)$ satisfying $A(f(x_m))=A_m$ and $B(f(x_m))=B_m$.

Apply first the flow $x\mapsto\exp(tA(f(x)))x$. At time one it sends $x_m$ to $a_mx_m$ and preserves $f$. Apply next $x\mapsto\exp(tB(f(x)))x$. Since $f(a_mx_m)=f(x_m)$, its time-one map sends $a_mx_m$ to $u_ma_mx_m$. Both flows are complete by Lemma~\ref{lem:groupflow}.
\end{proof}

\subsection{Interpolation on a torus orbit}

\begin{lemma}\label{lem:torusproduct}
Let $\chi,\psi:T\to\C^*$ be characters whose differentials are linearly independent in $\tlie^*$. Put
\[
 T_q=(\ker\chi)^\circ,\qquad T_r=(\ker\psi)^\circ.
\]
Then $T=T_rT_q$.
\end{lemma}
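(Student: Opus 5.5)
The plan is to reduce the statement to Lie algebras and then use connectedness of $T$. Since $T$ is a connected algebraic torus, it is abelian, and $T_rT_q$ is the image of the multiplication homomorphism
\[
 m:T_r\times T_q\longrightarrow T,\qquad (a,b)\longmapsto ab.
\]
This image is a closed connected subgroup of $T$, because images of algebraic group homomorphisms are closed. It therefore suffices to show that $m$ has surjective differential, that is,
\[
 \Lie(T_r)+\Lie(T_q)=\tlie.
\]
Once this holds, $\dim T_rT_q=\dim T$. Since $T$ is connected, the closed subgroup $T_rT_q$ must then be all of $T$.

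Next, compute the two Lie algebras. The differential of $\chi$ is a nonzero linear functional $d\chi$ on $\tlie$. The kernel $\ker\chi$ is a closed subgroup, and its Lie algebra is $\ker d\chi$. Passing to the identity component does not change the Lie algebra, so
\[
 \Lie(T_q)=\ker d\chi,\qquad \Lie(T_r)=\ker d\psi.
\]
This is the same identity the proof of Lemma~\ref{lem:transverse} used for $\salg_1,\salg_2$.

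The linear algebra step then finishes the argument. Because $d\chi$ and $d\psi$ are linearly independent, $\ker d\chi\ne\ker d\psi$, and both are hyperplanes. Two distinct hyperplanes span $\tlie$, which was already noted in Lemma~\ref{lem:transverse}.

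The only point that needs care is the identification $\Lie(\ker\chi)=\ker d\chi$. This can be checked in coordinates $T\simeq(\C^*)^\ell$, with $\chi(t)=\prod t_i^{a_i}$ and $d\chi=\sum a_i\,dz_i$. In these coordinates $\ker\chi$ visibly has codimension one, and its tangent space at the identity lies in $\ker d\chi$. Equality of dimensions then gives the identification.

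If one prefers to avoid the closed-image fact, an explicit alternative is available. Choose $C_q\in\ker d\chi$ with $d\psi(C_q)=1$, and $C_r\in\ker d\psi$ with $d\chi(C_r)=1$. Every $t=\exp(Z)$ can be factored directly once $Z$ is decomposed along $\ker d\chi+\ker d\psi$. This uses the analytic surjectivity of $\exp$ on a torus noted in Section~\ref{sec:flows}, and it also yields $T=T_rT_q$.
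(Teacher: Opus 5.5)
Your proof is correct. Your main route differs from the paper's, and your closing ``explicit alternative'' is essentially the paper's proof.

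The paper argues directly. It identifies $\mathfrak t_q=\ker d\chi$ and $\mathfrak t_r=\ker d\psi$ and notes that these distinct kernels sum to $\tlie$. For $t\in T$ it picks a logarithm $L$, splits $L=L_r+L_q$, and uses commutativity to write $t=\exp(L_r)\exp(L_q)$. Here $\exp(L_r)\in T_r$ because $\exp(\mathfrak t_r)$ is a connected subset of $\ker\psi$ containing $1$.

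Your main argument instead treats $T_rT_q$ as the image of the homomorphism $T_r\times T_q\to T$. You invoke closedness of images of algebraic homomorphisms and conclude by a dimension count and connectedness of $T$. You also supply the coordinate check $\Lie(\ker\chi)=\ker d\chi$, which the paper takes for granted.

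What each route buys:
\begin{itemize}
\item Your route avoids surjectivity of the exponential. It works verbatim for two connected subgroups of any connected commutative algebraic group whose Lie algebras span the whole Lie algebra.
\item The paper's route is shorter. It also produces the factors already equipped with logarithms in $\mathfrak t_r$ and $\mathfrak t_q$, which is exactly the data interpolated in Proposition~\ref{prop:torusmatch}.
\end{itemize}

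You do not actually need closedness of the image. Surjectivity of the differential of the multiplication map makes the image contain a neighborhood of $1$. A subgroup of the connected group $T$ containing such a neighborhood is open, hence closed, hence all of $T$.
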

\begin{proof}
The Lie algebras are $\mathfrak t_q=\ker d\chi$ and $\mathfrak t_r=\ker d\psi$. Independent linear forms have distinct kernels, whose sum is $\tlie$. Given $t\in T$, choose a logarithm $L\in\tlie$ and decompose $L=L_r+L_q$ with $L_r\in\mathfrak t_r$ and $L_q\in\mathfrak t_q$. Since $T$ is abelian,
\[
 t=\exp(L_r)\exp(L_q),\qquad
 \exp(L_r)\in T_r,\quad\exp(L_q)\in T_q.
\]
Taking identity components ensures that the factors have logarithms in their Lie algebras, even when the characters are not primitive.
\end{proof}

\begin{proposition}[Interpolation in one torus orbit]\label{prop:torusmatch}
Let $T$ act holomorphically on $X$, and let $q,r\in\OO(X)$ satisfy
\[
 q(tx)=\chi(t)q(x),\qquad r(tx)=\psi(t)r(x),
\]
where $d\chi,d\psi$ are independent. Let $a_m$ and $b_m$ lie in one $T$-orbit. Suppose that each of the two sequences $q(a_m)$ and $r(b_m)$ consists of pairwise distinct numbers forming a closed discrete subset of $\C$. Then there exist complete holomorphic flow maps $\Phi_1,\Phi_2$ such that
\[
 (\Phi_2\circ\Phi_1)(a_m)=b_m\qquad(m\ge1).
\]
\end{proposition}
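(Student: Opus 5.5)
Since all the points lie in one $T$-orbit, I will write $b_m=\tau_m a_m$ for some $\tau_m\in T$. Lemma~\ref{lem:torusproduct} factors each $\tau_m$ as
\[
 \tau_m=\sigma_m\kappa_m,\qquad \sigma_m\in T_r=(\ker\psi)^\circ,\quad \kappa_m\in T_q=(\ker\chi)^\circ.
\]
The plan is to realize $a_m\mapsto\kappa_ma_m$ by a flow that preserves $q$. Then I realize $\kappa_ma_m\mapsto\sigma_m\kappa_ma_m=b_m$ by a flow that preserves $r$. The interpolation is done first at the $q$-values and then at the $r$-values.

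\textbf{First flow.} The function $q$ is invariant under $T_q$, because $\chi$ is trivial on $\ker\chi$. Choose logarithms $L_m\in\mathfrak t_q$ with $\exp(L_m)=\kappa_m$; this is possible because $T_q$ is a connected torus. The values $q(a_m)$ are pairwise distinct and closed discrete, so Lemma~\ref{lem:entire} gives an entire $F:\C\to\mathfrak t_q$ with $F(q(a_m))=L_m$. By Lemma~\ref{lem:groupflow}, $x\mapsto\exp(tF(q(x)))x$ is a complete flow, and its time-one map $\Phi_1$ sends $a_m$ to $\kappa_ma_m$.

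\textbf{Second flow.} The function $r$ is invariant under $T_r$. The key observation is that
\[
 r(\kappa_ma_m)=r(\sigma_m^{-1}b_m)=\psi(\sigma_m)^{-1}r(b_m)=r(b_m).
\]
Hence the intermediate points have pairwise distinct, closed discrete $r$-values, exactly the given sequence $r(b_m)$. This step is the one requiring care: the order of the factors must be chosen so that the second interpolation takes place at the known values $r(b_m)$, not at the uncontrolled values $r(a_m)$. Choose logarithms $M_m\in\mathfrak t_r$ of $\sigma_m$ and an entire $E:\C\to\mathfrak t_r$ with $E(r(b_m))=M_m$. The flow $x\mapsto\exp(tE(r(x)))x$ is complete by Lemma~\ref{lem:groupflow}, and its time-one map $\Phi_2$ sends $\kappa_ma_m$ to $\sigma_m\kappa_ma_m=\tau_ma_m=b_m$.

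Composing gives $(\Phi_2\circ\Phi_1)(a_m)=b_m$ for all $m$. Neither flow needs any hypothesis on $q(b_m)$ or $r(a_m)$. The only ingredients are the factorization $T=T_rT_q$, which uses commutativity of $T$, and the invariance of $q$ and $r$ under the respective kernels.
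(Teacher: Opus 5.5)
Your proposal is correct and follows the paper's proof essentially step for step. The paper likewise factors $\tau_m$ through $T=T_rT_q$ and interpolates the $T_q$-logarithms at $q(a_m)$. It then uses $\psi(\sigma_m)=1$ to see that the intermediate points have $r$-values $r(b_m)$, and interpolates the $T_r$-logarithms at those values.
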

\begin{proof}
Choose $\tau_m\in T$ with $\tau_ma_m=b_m$. Factor it, by Lemma~\ref{lem:torusproduct}, as
\[
 \tau_m=v_mw_m,\qquad v_m\in T_r,\quad w_m\in T_q.
\]
Choose logarithms $A_m\in\mathfrak t_q$ of $w_m$. Interpolate a holomorphic map $A:\C\to\mathfrak t_q$ with $A(q(a_m))=A_m$. Because $T_q$ preserves $q$, Lemma~\ref{lem:groupflow} gives a complete flow with time-one map
\[
 a_m\longmapsto a'_m=w_ma_m.
\]
Since $v_m\in T_r$, we have
\begin{equation}\label{eq:intermediatelabel}
 r(a'_m)=r(v_ma'_m)=r(b_m).
\end{equation}

Choose logarithms $B_m\in\mathfrak t_r$ of $v_m$ and an entire map $B:\C\to\mathfrak t_r$ satisfying $B(r(b_m))=B_m$. Since $T_r$ preserves $r$, Lemma~\ref{lem:groupflow} gives the complete flow $x\mapsto\exp(tB(r(x)))x$. Its time-one map sends $a'_m$ to $v_ma'_m=b_m$.
\end{proof}

\subsection{Normalizing a sparse sequence}

\begin{proposition}\label{prop:principal}
Under \eqref{eq:principalcase}, there are a closed equivariant embedding $X\subset V$, a compact real form $K$, a $K$-invariant Hermitian norm, and increasing thresholds $R_m\to\infty$, with $R_1=0$, such that every sequence of distinct points $x_m$ with $\|x_m\|>R_m$ for $m\ge2$ is sent to the sequence of Proposition~\ref{prop:model} by the action of one element of $K$ followed by at most five complete holomorphic flow maps. Every injective self-map of this sequence is realized by two complete flow maps.
\end{proposition}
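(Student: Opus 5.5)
We assemble the pieces already built. Fix the closed equivariant embedding $X\subset V$ of the auxiliary-embedding subsection, a compact real form $K$ and a $K$-invariant Hermitian norm. Take $q,r,o,p_m$ and the open sets $\Omega_m$ from Proposition~\ref{prop:model}, and recall that $S_1$ fixes $q$ and $S_2$ fixes $r$ (Lemma~\ref{lem:transverse}, where rescaling does not affect this). We apply Lemma~\ref{lem:prepare} to the regular function $q$, which is a multiple of $q_0$, using the negative simple-root vector $f_\alpha$ from Lemma~\ref{lem:fullsupport}; this vector satisfies $V_{f_\alpha}q\not\equiv0$ and generates a one-parameter unipotent subgroup. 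Lemma~\ref{lem:prepare} then gives thresholds $R_m$, independent of the open sets. For a sequence with $\|x_m\|>R_m$ it produces $k\in K$ and one flow map $\Psi$ such that $y_m=\Psi(kx_m)\in\Omega_m$, the values $q(y_m)$ are pairwise distinct, and $|q(y_m)|>m$. In particular these values form a closed discrete subset of $\C$.

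Since $y_m\in\Omega_m\subset\vartheta_m(S_1\times S_2)$, we choose $a_m\in S_1$ and $b_m\in S_2$ with $y_m=a_mb_mp_m$. Write $S_1=U\cdot(\ker\lambda)^\circ$, a unipotent group normalized by a connected torus, and apply Lemma~\ref{lem:solvable} with $f=q$ and $s_m=a_m^{-1}$. This gives two flow maps sending $y_m\mapsto b_mp_m$. Next, $r(b_mp_m)=r(p_m)=m$, because $S_2$ fixes $r$. These values are distinct and discrete, so Lemma~\ref{lem:solvable} applied to $S_2=n_wUn_w^{-1}\cdot n_w(\ker\lambda)^\circ n_w^{-1}$ with $f=r$ and $s_m=b_m^{-1}$ gives two more flow maps sending $b_mp_m\mapsto p_m$. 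Altogether we use $k$, then $\Psi$, then four flows, which is one element of $K$ followed by five flow maps. The bound ``at most five'' covers the case where some factor is trivial.

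For the second claim, let $\sigma$ be an injection and set $a_m=p_m$, $b_m=p_{\sigma(m)}$. All these points lie in the single orbit $T\cdot o$, and $q(p_m)=m$ and $r(p_{\sigma(m)})=\sigma(m)$ are pairwise distinct integers, since $\sigma$ is injective. Hence each sequence is closed discrete. The characters $\chi,\psi$ have independent differentials, so Proposition~\ref{prop:torusmatch} yields two flow maps with $(\Phi_2\circ\Phi_1)(p_m)=p_{\sigma(m)}$.

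The only point that needs care is checking the hypotheses of the cited results. Lemma~\ref{lem:prepare} needs a unipotent $\xi$ with $V_\xi q\neq0$, which Lemma~\ref{lem:fullsupport} supplies. Lemma~\ref{lem:solvable} needs the decomposition of $S_i$ into a unipotent part and a connected torus, which holds because $(\ker\lambda)^\circ\subset T$ normalizes $U$, and likewise after conjugation by $n_w$. Beyond this, the argument is a direct chaining of the earlier results, so I expect no substantive obstacle.
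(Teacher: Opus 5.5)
Your proposal is correct and follows the paper's proof essentially step for step: Lemma~\ref{lem:prepare} with the root vector from Lemma~\ref{lem:fullsupport}, the factorization $y_m=a_mb_mp_m$, Lemma~\ref{lem:solvable} applied twice (first interpolating at $q(y_m)$, then at $r(b_mp_m)=m$), and Proposition~\ref{prop:torusmatch} for self-injections. Your explicit checks of the hypotheses are the points the paper uses implicitly, namely the decomposition of each $S_i$ as a unipotent group times a connected torus and the nonvanishing $V_{f_\alpha}q\not\equiv0$.
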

\begin{proof}
Apply Lemma~\ref{lem:prepare} to the function $q$ and the open sets $\Omega_m$ of Proposition~\ref{prop:model}. Lemma~\ref{lem:fullsupport} provides the negative simple-root subgroup moving $q$ nontrivially. We obtain the action of one element of $K$ and one unipotent shear sending $x_m$ to $y_m\in\Omega_m$, with pairwise distinct divergent values $q(y_m)$.

Choose independently for each $m$ a factorization
\begin{equation}\label{eq:chosenfactors}
 y_m=a_mb_mp_m,\qquad a_m\in S_1,\quad b_m\in S_2.
\end{equation}
The connected subgroup $S_1=U(\ker\chi)^\circ$ fixes $q$. Apply Lemma~\ref{lem:solvable} to $S_1$, the function $q$, and the elements $a_m^{-1}$. Two flow maps send
\[
 y_m\longmapsto z_m=b_mp_m.
\]
The whole subgroup $S_2$ fixes $r$, so
\begin{equation}\label{eq:targetlabels}
 r(z_m)=r(b_mp_m)=r(p_m)=m.
\end{equation}
The values $r(z_m)$ are therefore pairwise distinct and closed discrete. Two further flow maps from Lemma~\ref{lem:solvable}, now for $S_2$ with elements $b_m^{-1}$, send $z_m$ to $p_m$. In total, there are five flows after the compact action.

For a self-injection $p_m\mapsto p_{\sigma(m)}$, use Proposition~\ref{prop:torusmatch} with the torus $T$ and the independent characters $\chi,\psi$. The two sequences of numbers $q(p_m)=m$ and $r(p_{\sigma(m)})=\sigma(m)$ are each pairwise distinct and closed discrete. The proposition gives the two flows.
\end{proof}

\needspace{9\baselineskip}
\section{A nonspherical example: \texorpdfstring{$\SL_3/\C^*$}{SL3/C*}}\label{sec:cstar}

Let
\[
 G=\SL_3(\C),\qquad H=\{\diag(t,t^{-1},1):t\in\C^*\},
 \qquad X=G/H.
\]
The quotient has dimension seven, while a Borel subgroup of $G$ has dimension five, so no Borel orbit is open. We give explicit formulas for the maps in Proposition~\ref{prop:principal}. The calculation also shows how a conjugate of $\Lie(H)$ completes the sum of the two solvable Lie algebras to $\g$.

\subsection{Affine coordinates and the two functions}
Write $g=[v_1\ v_2\ c]\in\SL_3$ in columns. Right multiplication by $H$ scales $v_1$ and $v_2$ inversely, so
\[
 M=v_1v_2^{\mathsf T},\qquad c
\]
are coordinates on the quotient. For a matrix $M$, put
\[
 \ell(M)=(M_{23}-M_{32},\ M_{31}-M_{13},\ M_{12}-M_{21}).
\]
The row $\ell(M)$ represents $v_1\times v_2$, and $\ell(M)c=\det g=1$. We obtain the closed equivariant realization
\begin{equation}\label{eq:cstarembedding}
 X\simeq\{(M,c)\in\operatorname{Mat}_3(\C)\oplus\C^3:
                  \rk M\le1,\ \ell(M)c=1\},
 \qquad h\cdot(M,c)=(hMh^{\mathsf T},hc).
\end{equation}
Indeed, the equation $\ell(M)c=1$ forces $M\ne0$. A rank-one factorization $M=v_1v_2^{\mathsf T}$ then gives a matrix $[v_1\ v_2\ c]$ of determinant one, unique up to $H$. These factorizations are regular on the patches where an entry of $M$ is nonzero, and their cosets agree on overlaps. This proves the asserted isomorphism.

Set $P=c\ell(M)$. Under $G$, it transforms by conjugation. Define
\begin{equation}\label{eq:cstarqr}
 q=-P_{31}=-c_3\ell_1,\qquad r=P_{12}=c_1\ell_2.
\end{equation}
Let $U_1$ be the upper unitriangular subgroup and set
\[
 n=\begin{pmatrix}0&0&1\\1&0&0\\0&1&0\end{pmatrix},\qquad
 U_2=nU_1n^{-1}
 =\left\{\begin{pmatrix}1&0&0\\v&1&u\\w&0&1\end{pmatrix}:u,v,w\in\C\right\}.
\]
The function $q$ is fixed by $U_1$, and $r=-n\cdot q$ by $U_2$. For $t=\diag(t_1,t_2,t_3)$, their characters in \eqref{eq:weightconvention} are $t_3/t_1$ and $t_1/t_2$. Thus $q$ and $r$ are fixed, respectively, by the connected solvable groups
\[
 S_1=U_1T_q,\quad T_q=\{\diag(a,a^{-2},a):a\in\C^*\},
 \qquad
 S_2=U_2T_r,\quad T_r=\{\diag(b,b,b^{-2}):b\in\C^*\}.
\]

\subsection{The omitted root space and the fixed sequence}
The two Lie algebras satisfy
\[
 \salg_1+\salg_2=\{Z\in\mathfrak{sl}_3:Z_{32}=0\}.
\]
Thus the sum omits the root space $\C E_{32}$. Choose
\[
 g_0=\begin{pmatrix}1&1&1\\0&1&1\\1&0&1\end{pmatrix},\qquad o=g_0H.
\]
At $o$ we have $c=(1,1,1)^{\mathsf T}$ and $\ell=(-1,1,1)$, so $q(o)=r(o)=1$. Moreover,
\[
 \Ad(g_0)\diag(1,-1,0)
 =\begin{pmatrix}0&-1&1\\-1&0&1\\1&-1&0\end{pmatrix}\in\g_o.
\]
Its $(3,2)$ entry is $-1$, and consequently $\salg_1+\salg_2+\g_o=\g$. In the notation of Lemma~\ref{lem:transverse}, $\lambda=\alpha_1+\alpha_2$, the Weyl element is $w_0s_{\alpha_1}$, and $\beta=\alpha_2$.

Put
\begin{equation}\label{eq:cstarmodel}
 t_m=\diag(1,m^{-1},m),\qquad p_m=t_mg_0H.
\end{equation}
Then $q(p_m)=r(p_m)=m$, so the points $p_m$ form a closed discrete set. Since the torus normalizes $S_1$ and $S_2$, the equality $\salg_1+\salg_2+\g_{p_m}=\g$ holds for every $m$.

\subsection{A unipotent shear and an explicit factorization}
To apply Lemma~\ref{lem:prepare}, take $A=\exp(\C E_{21})$. In the embedding \eqref{eq:cstarembedding}, a basis of the invariant linear functions gives
\begin{equation}\label{eq:cstarQ}
 Q=(M_{11},M_{13},M_{31},M_{33},M_{12}-M_{21},c_1,c_3).
\end{equation}
The action adds $t$ times row one to row two of $M$, and then the same multiple of column one to column two. This verifies the invariance of $Q$. Also,
\[
 q(\exp(tE_{21})x)=q(x)+t c_3\ell_2,
\]
whose coefficient is one at $o$. Lemma~\ref{lem:prepare} therefore applies with this $Q$ and $q$.

In this example we can specify one open set that works for every $m$. Let $\Omega$ consist of the cosets represented by $g=(g_{ij})$ with
\begin{equation}\label{eq:cstaropen}
 g_{31}g_{33}(g_{23}g_{31}-g_{21}g_{33})
                (g_{33}^2-g_{31}g_{32})\ne0.
\end{equation}
The nonvanishing conditions are unchanged under right multiplication by $H$. They define a nonempty Zariski-open subset of $X$, since they hold at $g_0$.

Fix $gH\in\Omega$ and $m\ge1$. We may replace $g$ by $gh(s)^{-1}$, where $h(s)=\diag(s,s^{-1},1)$, without changing the coset. We choose $s$ so that the matrix to be factored has zero $(3,2)$ entry. For $s\ne0$, put
\[
 C(s)=g h(s)^{-1}g_0^{-1}t_m^{-1}.
\]
Direct multiplication gives
\[
 C(s)_{32}=m\left(g_{33}-\frac{g_{31}}s\right).
\]
Thus $s=g_{31}/g_{33}$ makes this entry zero. Set $C=C(s)$ for this choice of $s$. The matrix $C$ is independent of the original representative $g$, and $gH=Cp_m$. Its diagonal entries satisfy
\[
 C_{22}=m\left(g_{23}-\frac{g_{21}g_{33}}{g_{31}}\right),\qquad
 C_{33}=\frac1m\left(g_{33}-\frac{g_{31}g_{32}}{g_{33}}\right).
\]
Both diagonal entries are nonzero by \eqref{eq:cstaropen}. Choose a cube root and define
\begin{align*}
 a^3&=(C_{22}^2C_{33})^{-1},& b&=C_{22}a^2,\\
 \xi&=C_{12}/C_{22},& \zeta&=(C_{13}-\xi C_{23})/C_{33},\\
 v&=C_{21}/C_{22},& w&=C_{31}/(ab),&
 u&=C_{23}a^2b^2.
\end{align*}
Then
\begin{equation}\label{eq:cstarfactor}
 C=U^{(1)}D_aU^{(2)}D_b,
\end{equation}
where
\begin{align*}
 U^{(1)}&=I+\xi E_{12}+\zeta E_{13},& D_a&=\diag(a,a^{-2},a),\\
 U^{(2)}&=I+vE_{21}+uE_{23}+wE_{31},& D_b&=\diag(b,b,b^{-2}).
\end{align*}
Multiplication verifies every entry except $C_{11}$ directly; that entry follows from $\det C=1$ and $C_{22}C_{33}\ne0$. Thus $\Omega\subset S_1S_2p_m$ for every $m$. 

\subsection{The complete flows}
Use an $\SU_3$-invariant norm on the module in \eqref{eq:cstarembedding}. For a sequence beyond the thresholds of Lemma~\ref{lem:prepare}, the first two flows act by an element of $\SU_3$ and by the unipotent shear
\[
 x\longmapsto\exp\bigl(F(Q(x))E_{21}\bigr)x.
\]
Here $F$ is entire and is chosen by that lemma so that the resulting points $y_m$ lie in $\Omega$ and the values $q(y_m)$ are pairwise distinct and divergent. Apply \eqref{eq:cstarfactor} at each $y_m$ to write
\[
 y_m=U^{(1)}_mD_{a_m}U^{(2)}_mD_{b_m}p_m.
\]
Set
\[
 H_q=\diag(1,-2,1),\qquad H_r=\diag(1,1,-2),
\]
and note that
\[
 \log U^{(1)}_m=\xi_mE_{12}+\zeta_mE_{13},\qquad
 \log U^{(2)}_m=(v_m-u_mw_m/2)E_{21}+u_mE_{23}+w_mE_{31}.
\]
For $j=3,4,5,6$, the row labelled $j$ in the following table prescribes the values of an entire map $L_j$ into the Lie algebra of the group listed in that row. The corresponding flow is $x\mapsto\exp(\tau L_j(f(x)))x$, where $f=q$ for $j=3,4$ and $f=r$ for $j=5,6$; in each row the acting subgroup fixes $f$.
\begin{center}
\small
Flows sending $y_m$ to $p_m$\par\smallskip
\begin{tabular}{@{}cllll@{}}
\toprule
Flow $j$ & Group & Argument $z$ & $L_j(z)$ & Image of the $m$th point\\
\midrule
3 & $U_1$ & $q(y_m)$ & $-\log U^{(1)}_m$ & $D_{a_m}U^{(2)}_mD_{b_m}p_m$\\
4 & $T_q$ & $q(y_m)$ & $-(\log a_m)H_q$ & $U^{(2)}_mD_{b_m}p_m$\\
5 & $U_2$ & $m$ & $-\log U^{(2)}_m$ & $D_{b_m}p_m$\\
6 & $T_r$ & $m$ & $-(\log b_m)H_r$ & $p_m$\\
\bottomrule
\end{tabular}
\end{center}
Flows 3 and 4 preserve $q$ and send $y_m$ to $U^{(2)}_mD_{b_m}p_m$. Since $U^{(2)}_mD_{b_m}\in S_2$, the value of $r$ at this point is $m$. Flows 5 and 6 preserve $r$. Each $L_j$ exists by entire interpolation, and Lemma~\ref{lem:groupflow} proves completeness. The divergence calculation in Section~\ref{sec:tameness} proves that these flows preserve the invariant volume form. Finally, Proposition~\ref{prop:torusmatch}, applied to \eqref{eq:cstarmodel}, realizes every injection $p_m\mapsto p_{\sigma(m)}$ by two torus flows.

\section{The remaining homogeneous spaces}\label{sec:cases}

We reduce an arbitrary $X$ in Theorem~\ref{thm:main} to Proposition~\ref{prop:principal} or to the rank-one quadric.

\subsection{Removing the action kernel}
Let
\[
 N=\bigcap_{g\in G}gHg^{-1}
\]
be the kernel of the $G$-action on $X$. It is a closed normal algebraic subgroup contained in $H$. Put $\bar G=G/N$ and $\bar H=H/N$. Then $\bar G$ is connected semisimple, $\bar H$ is connected reductive, and
\[
 X\simeq\bar G/\bar H.
\]
Connectedness of $\bar H$ follows from that of $H$, and reductivity follows because an algebraic image of a reductive group in characteristic zero is reductive; see \cite{Borel}.

If $\rk\bar G\ge2$ and $\dim\bar H>0$, Proposition~\ref{prop:principal} applies. We address the other two possibilities directly.

\subsection{The group case through a left-right action}
If $\bar H$ has dimension zero, connectedness makes it trivial. Thus $X=\bar G=:J$ is a positive-dimensional semisimple algebraic group. Let $J\times J$ act by
\[
 (a,b)\cdot x=axb^{-1}.
\]
The stabilizer of the identity is $\Delta J$, and the action kernel is $\Delta Z(J)$. The effective acting group
\[
 (J\times J)/\Delta Z(J)
\]
is connected semisimple of rank $2\rk J\ge2$, and its stabilizer is the connected reductive positive-dimensional group $\Delta J/\Delta Z(J)$. Proposition~\ref{prop:principal} therefore applies to this action and gives the normalizing maps and the extensions of self-injections with the same bounds. The RR assertion for semisimple groups was already known \cite[Theorems~2.3(ii) and~3.1]{WinkelmannRR}; the left-right presentation gives the stated flow bound.

\subsection{The effective rank-one quotient}\label{subsec:quadric}
Suppose $\rk\bar G=1$ and $\dim\bar H>0$. A proper connected reductive subgroup of positive dimension in a complex rank-one semisimple group is a maximal torus: its Lie algebra is either a one-dimensional torus or all of $\mathfrak{sl}_2$, and the latter would give a point quotient. Pulling back to the simply connected cover identifies
\[
 X\simeq\SL_2/T.
\]
The inverse image of the maximal torus is a maximal torus containing the covering kernel, so the quotients are isomorphic. Conjugating that torus to $\SO_2(\C)$ gives the symmetric-matrix realization
\begin{equation}\label{eq:quadric}
 X=\left\{\begin{pmatrix}a&b\\b&c\end{pmatrix}:ac-b^2=1\right\},
 \qquad g\cdot S=gSg^{\mathsf T}.
\end{equation}
The RR property of this quadric follows from Winkelmann's results on Danielewski surfaces \cite[Theorem~2.3(iii)]{WinkelmannRR}. We give explicit normalizing flows and extensions of self-injections.

With $E_{ij}$ denoting the elementary matrix, write $u_+(t)=I+tE_{12}$ and $u_-(s)=I+sE_{21}$. The actions on coordinates are
\begin{align}
 u_+(t):\ (a,b,c)&\longmapsto(a+2tb+t^2c,\ b+tc,\ c),\label{eq:upper2}\\
 u_-(s):\ (a,b,c)&\longmapsto(a,\ b+sa,\ c+2sb+s^2a).\label{eq:lower2}
\end{align}
Thus the upper action preserves $c$, and the lower action preserves $a$.

Choose $\beta_m\in\C$ once and for all with $\beta_m^2=m^2-1$, and put
\begin{equation}\label{eq:quadricmodel}
 p_m=\begin{pmatrix}m&\beta_m\\\beta_m&m\end{pmatrix}.
\end{equation}
The diagonal entries show that these points form a closed discrete sequence.

The space of symmetric $2\times2$ matrices is an irreducible $\SL_2$-module. Apply Proposition~\ref{prop:position} and Corollary~\ref{cor:irreduciblespread} with the nonzero linear function $q(S)=c$ and the open set $\{c\ne0\}$. After one compact congruence, every sufficiently sparse sequence of distinct points has $c_m\ne0$ with the $c_m$ pairwise distinct and closed discrete.

For each $S_m=(a_m,b_m,c_m)$ choose $d_m\in\C$ with
\[
 d_m^2=mc_m-1,
 \qquad t_m=\frac{d_m-b_m}{c_m}.
\]
Interpolate $F(c_m)=t_m$ by an entire function and use the complete upper shear with coefficient $F(c)$. Formula~\eqref{eq:upper2} gives new coordinates $b'_m=d_m$ and $c'_m=c_m$. Since the determinant is one,
\[
 a'_m=\frac{1+d_m^2}{c_m}=m.
\]
Now interpolate
\[
 F_-(m)=\frac{\beta_m-d_m}{m}
\]
and use the complete lower shear with coefficient $F_-(a)$. It leaves $a=m$, sends $b$ to $\beta_m$, and forces
\[
 c=\frac{1+\beta_m^2}{m}=m.
\]
Thus two unipotent shears after the compact action normalize the sequence to \eqref{eq:quadricmodel}.

For an injection $\sigma:\N\to\N$, begin at $p_m$ and choose $d_m^2=m\sigma(m)-1$. An upper shear with time $(d_m-\beta_m)/m$, interpolated at $c=m$, gives $(a,b,c)=(\sigma(m),d_m,m)$. A lower shear with time
\[
 \frac{\beta_{\sigma(m)}-d_m}{\sigma(m)},
\]
interpolated at $a=\sigma(m)$, gives $p_{\sigma(m)}$. The numbers $\sigma(m)$ are pairwise distinct and form a closed discrete subset of $\C$.

In all cases, sufficiently sparse sequences can be sent to the fixed sequence, and its injective self-maps extend by two flows. Proposition~\ref{prop:principal} gives at most five flows after one compact action, and the quadric uses the upper and lower shears. An element of a connected compact Lie group belongs to a maximal torus and is an exponential. Its action is therefore itself one complete flow map, giving $n\le6$ in Theorem~\ref{thm:main}\textup{(a)}.

\subsection{The antipodal quotient}\label{subsec:antipodal}
The other positive-dimensional reductive isotropy in rank one is $N(T)$. Indeed, the identity component of a proper positive-dimensional reductive subgroup is a maximal torus, and the subgroup lies in its normalizer. Since $N(T)/T$ has order two, the possibilities are $T$ and $N(T)$; compare \cite[Lemma~2.1]{AndristUgoliniHom}.

Let $\widetilde X$ be the quadric in \eqref{eq:quadric}. After conjugating $T$ to $\SO_2$, the matrix $j=\diag(i,-i)$ represents the nontrivial element of $N(T)/T$ and satisfies $jj^{\mathsf T}=-I$. Right multiplication by $j$ therefore acts on $S=gg^{\mathsf T}$ by $\tau(S)=-S$. Hence
\begin{equation}\label{eq:conicquotient}
 X_N:=\SL_2/N(T)\simeq\widetilde X/\langle\tau\rangle
 \simeq\mathbf P^2\setminus \mathcal Q,
 \qquad \mathcal Q=\{ac-b^2=0\}.
\end{equation}
The last map is projectivization: a projective form of nonzero determinant has two representatives of determinant one, differing by sign. Andrist--Ugolini use the same quotient in constructing strongly tame sets \cite[Remark~2.6]{AndristUgoliniHom}. Here we obtain sparse normalization and, through Section~\ref{sec:tameness}, the full RR property.

Use the exhaustion $\rho([S])=\|S\|$ for an $\SU_2$-invariant Hermitian norm. It is well-defined and proper because the norm is sign-invariant and the quotient is finite. For distinct quotient points choose lifts $S_m$ arbitrarily. Both $S_m-S_n$ and $S_m+S_n$ are nonzero when $m\ne n$. In the proof of Proposition~\ref{prop:position}, exclude the zero sets of
\[
 k\longmapsto c\bigl(k(S_m-S_n)k^{\mathsf T}\bigr),\qquad
 k\longmapsto c\bigl(k(S_m+S_n)k^{\mathsf T}\bigr).
\]
Each has Haar measure zero by irreducibility of $\Sym_2(\C)$. The same small-ball estimate gives thresholds such that one compact congruence makes $c_m\ne0$ for all $m$, $|c_m|>m$ for $m\ge2$, and the $c_m^2$ pairwise distinct. Thus $c_m^2$ are closed discrete.

For entire functions $F_+,F_-$ the flows
\begin{align}
 S&\longmapsto u_+\bigl(tF_+(c^2)\bigr)S
                  u_+\bigl(tF_+(c^2)\bigr)^{\mathsf T},\label{eq:even-upper}\\
 S&\longmapsto u_-\bigl(tF_-(a^2)\bigr)S
                  u_-\bigl(tF_-(a^2)\bigr)^{\mathsf T}\label{eq:even-lower}
\end{align}
are complete by Lemma~\ref{lem:groupflow}. Their coefficients are unchanged by $S\mapsto-S$, so both flows commute with $\tau$ and descend to $X_N$.

Write $P_m$ for the matrix in \eqref{eq:quadricmodel} and set $\bar p_m=[P_m]$. Choose $d_m^2=mc_m-1$ and interpolate
\[
 F_+(c_m^2)=\frac{d_m-b_m}{c_m},\qquad
 F_-(m^2)=\frac{\beta_m-d_m}{m}.
\]
The calculation in Section~\ref{subsec:quadric} sends the selected lift $S_m$ first to $(m,d_m,c_m)$ and then to $P_m$. Equivariance sends the other lift to $-P_m$. Thus one compact flow and two descended shears normalize the quotient sequence to $\bar p_m$. The function $a^2$ descends to $X_N$ and takes the values $m^2$ at $\bar p_m$, so $\{\bar p_m:m\in\N\}$ is closed discrete.

For an injection $\bar p_m\mapsto\bar p_{\sigma(m)}$, choose $d_m^2=m\sigma(m)-1$ and interpolate
\[
 F_+(m^2)=\frac{d_m-\beta_m}{m},\qquad
 F_-(\sigma(m)^2)=\frac{\beta_{\sigma(m)}-d_m}{\sigma(m)}.
\]
The two descended shears send $\bar p_m$ to $\bar p_{\sigma(m)}$. The sets $\{m^2\}$ and $\{\sigma(m)^2\}$ are closed discrete, and injectivity of $\sigma$ prevents repeated arguments in the second interpolation.

\section{Volume preservation and tameness}\label{sec:tameness}

For the spaces in Theorem~\ref{thm:main}, use the closed embedding and norm chosen in Sections~\ref{sec:solvable} and~\ref{sec:cases}, and put $\rho(x)=\|x\|$. This exhaustion is proper because the image is closed, and positive because the orbit does not contain zero. The thresholds and the sequence $D_0$ are those constructed for the corresponding case.

\subsection{The volume form and its preservation}
\begin{lemma}\label{lem:volume-character}
Let $G$ be connected semisimple and $H\subset G$ closed reductive. A $G$-invariant nowhere-vanishing algebraic top form on $G/H$ exists if and only if
\begin{equation}\label{eq:isotropy-determinant}
 \delta_H(h):=\det\bigl(\Ad(h)|_{\g/\h}\bigr)=1
 \qquad(h\in H).
\end{equation}
The character $\delta_H$ is trivial on $H^\circ$, so connectedness of $H$ suffices.
\end{lemma}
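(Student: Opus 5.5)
The plan is to identify $G$-invariant top forms on $X=G/H$ with $H$-invariant elements of the line $\Lambda^{\mathrm{top}}(\g/\h)^*$, where $\g/\h$ is the tangent space at the base point $o=eH$. Set $d=\dim X$.

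\emph{Necessity.} Suppose $\omega$ is $G$-invariant and nowhere vanishing. Then $\omega_o$ is a nonzero element of $\Lambda^d(\g/\h)^*$, and $H$ fixes $o$. The differential of $h$ at $o$ is the map $\Ad(h)|_{\g/\h}$, so invariance under $h$ gives $\delta_H(h)^{-1}\omega_o=\omega_o$. Hence $\delta_H\equiv1$.

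\emph{Sufficiency.} Assume $\delta_H\equiv1$ and fix $\omega_o\ne0$. Define $\omega_{go}=(g^{-1})^*\omega_o$. This is well defined because two representatives of the same point differ by some $h\in H$, which fixes $\omega_o$.

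To see that $\omega$ is algebraic, pull it back along $\pi:G\to G/H$. The pullback $\pi^*\omega$ is the left-invariant $d$-form on $G$ obtained from $\omega_o\circ\Lambda^d(\g\to\g/\h)$, so it is regular. Since $\pi$ is smooth and surjective, $\omega$ is regular by descent.

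Alternatively, $\omega$ is a $G$-invariant rational section of the canonical bundle $K_X$, and it has no zeros or poles because it is invariant under a transitive action. The smoothness of $\pi$ comes from characteristic zero. In both arguments $\omega$ is nowhere vanishing by construction.

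\emph{Triviality on $H^\circ$.} The map $\delta_H$ is an algebraic character of $H$, so its restriction to the reductive group $H^\circ$ factors through the connected abelianization $H^\circ/[H^\circ,H^\circ]$, which is a torus $Z(H^\circ)^\circ$ up to isogeny. It therefore suffices to check that $d\delta_H(Z)=\operatorname{tr}(\operatorname{ad}Z|_{\g/\h})$ vanishes for $Z\in\h$. Since $\g$ is semisimple, $\operatorname{tr}\operatorname{ad}_\g Z=0$. Since $\h$ is reductive, $\operatorname{ad}_\h Z$ is traceless on $[\h,\h]$, and it is zero on the center of $\h$, so $\operatorname{tr}\operatorname{ad}_\h Z=0$. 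Subtracting gives trace $0$ on $\g/\h$. A character of a connected group with zero differential is trivial.

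The only point I expect needs care is the regularity of the descended form. The cleanest route is to note that $K_X$ is a $G$-linearized line bundle, so $G$-invariant sections correspond to $H$-fixed vectors in the fiber at $o$; its $H$-representation is exactly $\delta_H^{-1}$.
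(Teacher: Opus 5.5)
Your proposal is correct and follows the paper's route. Invariant top forms correspond to isotropy-fixed covectors in $\Lambda^{\mathrm{top}}(\g/\h)^*$, on which $H$ acts by $\delta_H^{-1}$, and triviality on $H^\circ$ comes from separating the $\g$- and $\h$-contributions. The differences are minor: you check $\operatorname{tr}\operatorname{ad}_\g Z-\operatorname{tr}\operatorname{ad}_\h Z=0$ infinitesimally, whereas the paper writes $\delta_H=\det(\Ad|_\g)/\det(\Ad|_\h)$ and uses that $G$ and the derived group of $H^\circ$ have no characters; you also spell out the descent argument for algebraicity, which the paper leaves implicit.
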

\begin{proof}
A top covector at the base coset extends by $G$-translation precisely when it is fixed by the isotropy action. That action on top covectors has character $\delta_H^{-1}$, which gives the criterion and an algebraic form. Moreover,
\[
 \delta_H(h)=\frac{\det(\Ad(h)|_\g)}{\det(\Ad(h)|_\h)}.
\]
The numerator is one because $G$ has no nontrivial algebraic characters. On $H^\circ$ the denominator is one: a connected reductive group acts unimodularly on its Lie algebra, since its center acts trivially and its derived group has no characters. Thus $\delta_H$ factors through the finite component group.
\end{proof}

Under the hypotheses of Theorem~\ref{thm:main}, let $\omega$ be the invariant form of Lemma~\ref{lem:volume-character}.
Every fundamental vector field used above preserves $\omega$. Each variable-coefficient field has the form
\[
 W=\sum_j F_j(Q)V_{\xi_j},
\]
where $Q$ is invariant under every $V_{\xi_j}$ occurring in that field. Hence
\[
 \diver_\omega W
 =\sum_j\left(F_j(Q)\diver_\omega V_{\xi_j}
                  +V_{\xi_j}(F_j(Q))\right)=0.
\]
This applies to the unipotent shear of Lemma~\ref{lem:prepare}, the torus and unipotent flows of Lemma~\ref{lem:solvable}, and both torus flows used for self-injections.

The action factors through the effective quotient, so that reduction preserves the same form. In the group case, a left-invariant volume form on a semisimple group is also right-invariant, by unimodularity. Thus the enlarged left-right action preserves $\omega$. On the quadric, the original invariant form is preserved by the $\SL_2$ congruence action, and its two shears have invariant coefficients. Finally, the compact group action preserves $\omega$. This proves parts \textup{(a)}, \textup{(b)}, and the flow assertion of \textup{(d)} in Theorem~\ref{thm:main}. The same divergence calculation applies whenever \eqref{eq:isotropy-determinant} holds.

\begin{remark}\label{rem:conic-volume}
For the diagonal realization of $T$, the Weyl representative $\left(\begin{smallmatrix}0&-1\\1&0\end{smallmatrix}\right)$ sends $E_{12}$ to $-E_{21}$ and $E_{21}$ to $-E_{12}$ in $\mathfrak{sl}_2/\tlie$, so its isotropy determinant on $X_N=\SL_2/N(T)$ is $-1$. In fact $X_N$ has no nowhere-vanishing holomorphic two-form. On the covering quadric the residue form $\widetilde\omega=da\wedge db/a$, written on $a\ne0$, extends globally and satisfies $\tau^*\widetilde\omega=-\widetilde\omega$. A volume form downstairs would pull back to $f\widetilde\omega$ with $f$ nowhere zero and $f(-S)=-f(S)$. The quadric $\SL_2/T$ is simply connected by the homotopy sequence of $T\to\SL_2\to\SL_2/T$, so $f=e^h$ for a holomorphic $h$. Then $h(-S)-h(S)$ is a constant odd multiple of $\pi i$, whereas applying $\tau$ twice forces that constant to be zero. This is impossible. The tensor square $\widetilde\omega^{\otimes2}$ does descend, so $K_{X_N}^{\otimes2}$ is trivial.
\end{remark}

\subsection{Equivalence for every intermediate subgroup}
Under the hypotheses of Theorem~\ref{thm:main}, fix the sequence $D_0$ and thresholds constructed above, and let
\[
 \Autvfl(X)\subseteq\Gamma\subseteq\Aut(X).
\]
By parts \textup{(a)}, \textup{(b)}, and \textup{(d)}, the maps that normalize sparse sequences and extend injections $D_0\to D_0$ belong to $\Gamma$.
If $D=\{x_m\}$ is weakly $\Gamma$-tame, choose $\Psi\in\Gamma$ with
\[
 \rho(\Psi(x_m))\ge R_m+1>R_m\qquad(m\ge2).
\]
By parts \textup{(a)} and \textup{(d)}, there is a composition $\Theta$ of volume-preserving complete flow maps with $\Theta\Psi(x_m)=p_m$. Thus every enumeration of a weakly $\Gamma$-tame set can be sent to $p_m$ in its given order.

If $D$ is $\Gamma$-equivalent to $D_0$, conjugating the extensions from Theorem~\ref{thm:main}\textup{(b)} by that equivalence proves strong $\Gamma$-tameness.

Conversely, strong $\Gamma$-tameness implies weak $\Gamma$-tameness on any manifold. For an exhaustion and bounds, each compact sublevel contains only finitely many points of a closed discrete set. Recursively choose distinct image points within the set beyond the requested bounds. Strong tameness extends this self-injection. This proves the equivalence in part \textup{(c)} and its refinement in part \textup{(d)}.

Let $f:D\to E$ be an injection between two weakly $\Gamma$-tame sets. Its image is weakly $\Gamma$-tame: extend any assigned bounds on that subset to bounds on $E$. Enumerate $D=(x_m)$ and its image by $y_m=f(x_m)$. Normalize these two sequences separately, both to $p_m$, using maps $\Phi_D,\Phi_f\in\Gamma$. Then
\[
 \Phi_f^{-1}\Phi_D(x_m)=y_m=f(x_m).
\]
This proves Theorem~\ref{thm:main}.

\subsection{The four RR axioms}
In Winkelmann's definition \cite[Definition~2.1]{WinkelmannRR}, an RR-structure is a collection $\cT$ of infinite closed discrete subsets satisfying:
\begin{enumerate}[label=\textup{(RR\arabic*)},leftmargin=3.8em]
\item every injection between two members of $\cT$ extends to an automorphism;
\item for some proper exhaustion $\rho$ and positive $r_n\uparrow\infty$, every infinite $D\subset X$ with $\#\{x\in D:\rho(x)\le r_n\}\le n$ for all $n$ belongs to $\cT$;
\item $\cT$ is invariant under $\Aut(X)$;
\item there are a nonzero complete holomorphic vector field $W$ and a nonconstant holomorphic function $f$ with $W(f)=0$.
\end{enumerate}
Take $\Gamma=\Aut(X)$ and put $\cT=\Aut(X)\cdot D_0$. Injective interpolation was proved above, and automorphism invariance follows from this definition.

For the threshold axiom, take the counting radii $r_n=R_{n+1}$. If an infinite subset $D\subset X$ satisfies
\[
 \#\{x\in D:\rho(x)\le r_n\}\le n\qquad(n\ge1),
\]
it meets every compact set in finitely many points, because the $r_n$ are unbounded and $\rho$ is proper. It is therefore closed discrete. Enumerate it in nondecreasing order of $\rho$. For $m\ge2$,
\[
 \rho(x_m)>r_{m-1}=R_m.
\]
For $m=1$, positivity of $\rho$ gives $\rho(x_1)>0=R_1$. Theorem~\ref{thm:main}\textup{(a)} therefore normalizes the sequence, so $D\in\cT$.

For the final RR axiom under the hypotheses of Proposition~\ref{prop:principal}, choose a nonzero positive-root vector $\xi\in\ulie$. Effectivity makes its action field nonzero. It is complete and annihilates the nonconstant $U$-invariant function $q$. For a group quotient use the effective left-right presentation and the same argument. On the quadric use the upper field in \eqref{eq:upper2} with first integral $c$. The field is nonzero, for example at the identity matrix, and $c$ is nonconstant. Thus all four axioms in \cite[Definition~2.1]{WinkelmannRR} hold. This proves Theorem~\ref{thm:RR}.

\begin{proof}[Proof of Corollary~\ref{cor:oka}]
Apply the Oka-complement theorem for RR-spaces, \cite[Theorem~2.2(iii), Proposition~5.25]{WinkelmannRR}, to Theorem~\ref{thm:RR}.
\end{proof}

Every unbounded subset of $X$ contains a strongly $\Autvfl(X)$-tame sequence: choose distinct points successively beyond $R_m$ and apply Theorem~\ref{thm:main}\textup{(a), (b), (d)}.

\subsection{Normal form on an algebraic affine line}
\begin{corollary}[Normal form on an algebraic affine line]\label{cor:line-normal-form}
Under the hypotheses of Theorem~\ref{thm:RR}, let $C\subset X$ be a closed
algebraic curve isomorphic to $\mathbb A^1$, and fix an infinite closed
discrete subset $D_C\subset C$. For an infinite closed discrete subset
$D\subset X$, the following are equivalent:
\begin{enumerate}[label=\textup{(\roman*)}]
\item $D$ is tame;
\item some $\Phi\in\Aut(X)$ satisfies $\Phi(D)\subset C$;
\item some $\Phi\in\Aut(X)$ satisfies $\Phi(D)=D_C$.
\end{enumerate}
Such curves $C$ exist in every positive-dimensional $X$ covered by the theorem.
\end{corollary}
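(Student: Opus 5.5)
The plan is to show that \emph{every} infinite closed discrete subset of $C$ is weakly tame. The three equivalences then follow formally from Theorem~\ref{thm:main}\textup{(c)}. For the implications: (iii)$\Rightarrow$(ii) is trivial. (ii)$\Rightarrow$(i) holds because $\Phi(D)$ is an infinite closed discrete subset of $C$, hence tame by the key claim, and tameness is $\Aut(X)$-invariant. (i)$\Rightarrow$(iii) holds because $D_C$ is tame by the key claim, so any bijection $D\to D_C$ is an injection between tame sets and extends to an automorphism by Theorem~\ref{thm:main}\textup{(c)}. Existence of $C$ is as in the introduction: some root subgroup $A=\exp(\C\xi)$ acts nontrivially on $X$. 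Its nontrivial orbits are orbits of a unipotent group on an affine variety, hence closed. They are isomorphic to $\mathbb A^1$ because the stabilizer of a non-fixed point is a proper algebraic subgroup of $\C$, hence trivial.

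For the key claim, let $E=\{c_m\}\subset C$ be infinite closed discrete. Fix a proper exhaustion $\rho$ and bounds $b$. I would imitate Lemma~\ref{lem:prepare}, replacing sparseness by the algebraic structure of $C$. Use the embedding $X\subset V$, the compact form $K$, a one-parameter unipotent subgroup $A=\exp(\C\xi)$ acting nontrivially, and the $A$-invariant tuple $Q$ of Lemma~\ref{lem:invarianttuple}.

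First I choose $k\in K$ satisfying three requirements.
\begin{enumerate}[label=\textup{(\arabic*)}]
\item Every point $kc_m$ lies off the fixed locus $X^A$. This is a proper closed subset, so each condition fails only on a null set by Lemma~\ref{lem:generic}.
\item The values $Q(kc_m)$ are pairwise distinct. For $m\ne n$, the set of $k$ with $Q(k(c_m-c_n))=0$ is null by Lemma~\ref{lem:smallball}'s hypothesis.
\item The restriction $Q\circ k|_C$ is a nonconstant polynomial map $\mathbb A^1\to\C^s$. This holds because for two distinct points $x,y\in C$ the function $Q(k(x-y))$ is not identically zero in $k$.
\end{enumerate}
Requirements (1) and (2) each exclude countably many null sets, so a common $k$ exists. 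A nonconstant polynomial map $\mathbb A^1\to\C^s$ is proper. Hence $\{Q(kc_m)\}$ is closed discrete in $\C^s$: only finitely many $c_m$ lie in the preimage of any compact set, since $E$ is closed discrete in $C$.

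Next, for each $m$ the map $t\mapsto\exp(t\xi)kc_m$ is a nonconstant polynomial map into $V$, because $kc_m\notin X^A$. Its norm therefore tends to infinity as $|t|\to\infty$. Since $\rho$ is proper, I can choose $t_m$ with $\rho(\exp(t_m\xi)kc_m)\ge b(c_m)$. Lemma~\ref{lem:entire} gives an entire $F$ with $F(Q(kc_m))=t_m$. The flow of Lemma~\ref{lem:groupflow} with coefficient $F(Q)\xi$ then realizes these moves simultaneously. Composing with $k$, which is itself an automorphism, gives $\Phi\in\Aut(X)$ with $\rho(\Phi(c_m))\ge b(c_m)$. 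This proves weak tameness of $E$.

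The step I expect to need the most care is requirement (3), specifically ensuring that the values $Q(kc_m)$ form a closed discrete set for a non-sparse $E$; the norm-growth estimate of Proposition~\ref{prop:position} is unavailable here. The polynomiality of $C\simeq\mathbb A^1$ together with the properness of nonconstant polynomial maps from $\mathbb A^1$ is what replaces that estimate.
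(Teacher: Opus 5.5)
Your proof is correct, but it takes a different route from the paper. The paper does not prove the equivalences itself. It observes that $X$ is flexible, because the action fields of nilpotent elements span every tangent space, combines this with Theorem~\ref{thm:RR}, and then quotes Winkelmann's Theorem~6.9 for flexible RR-spaces. Only the existence of $C$ is argued directly, and exactly as you do.

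You instead prove the essential input yourself: every infinite closed discrete subset of $C$ is weakly tame. You do this with one element of $K$ and one shear $x\mapsto\exp(F(Q(x))\xi)x$. This is Lemma~\ref{lem:prepare}, with the small-ball estimate of Proposition~\ref{prop:position} replaced by properness of the nonconstant polynomial map $t\mapsto Q(k\varphi(t))$ for an isomorphism $\varphi:\mathbb A^1\to C$. The three implications then follow from the injection-extension statement in Theorem~\ref{thm:main}\textup{(c)} and the $\Aut(X)$-invariance of weak tameness. All the steps check out. Note that your requirement (3) is already implied by (2), since $c_1\ne c_2$ lie on $C$.

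Your route stays inside the paper's own machinery and needs neither flexibility nor the external theorem. It also gives a refinement. Since $k$ and the shear are flow maps of fields with zero $\omega$-divergence, the same equivalences hold for every $\Gamma$ with $\Autvfl(X)\subseteq\Gamma\subseteq\Aut(X)$, reading (i) as weak $\Gamma$-tameness and requiring $\Phi\in\Gamma$ in (ii)--(iii). Moreover, the map pushing a subset of $C$ beyond prescribed exhaustion levels is a composition of just two flow maps. The paper's citation is shorter and places the statement in Winkelmann's general theory, which applies to every flexible RR-space, not only to these homogeneous ones.
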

\begin{proof}
The variety $X$ is smooth and affine. Nilpotent elements span $\g$, and their action fields span every tangent space of $X$. These fields generate algebraic additive-group actions, so $X$ is flexible. Theorem~\ref{thm:RR} establishes its RR property. Thus \cite[Theorem~6.9]{WinkelmannRR} applies to $X$, the curve $C$, and the set $D_C$, and gives the three equivalent conditions. At least one root subgroup acts nontrivially,
since $\dim X>0$. Its nontrivial orbit is closed in the affine variety
and isomorphic to $\mathbb A^1$; see also the proof of
\cite[Corollary~6.8]{WinkelmannRR}.
\end{proof}

Theorem~\ref{thm:main}\textup{(a)} bounds the number of flow maps sending a sufficiently sparse enumerated sequence to $D_0$. Corollary~\ref{cor:line-normal-form} places no such bound on an automorphism sending $D$ onto $D_C$.

\subsection{Extension to disconnected isotropy}

\begin{proposition}\label{prop:disconnected}
Let $G$ be a connected complex semisimple algebraic group and $H\subset G$ a closed reductive subgroup. Put $N=\bigcap_{g\in G}gHg^{-1}$. Suppose $\dim G/H>0$ and $\dim(H/N)>0$. Then $X=G/H$ is an RR-space, and the conclusions of Theorem~\ref{thm:main}\textup{(a)--(c)} hold. The Oka-complement and affine-line conclusions of Corollaries~\ref{cor:oka} and~\ref{cor:line-normal-form} hold as well.

In particular, $\SL_2/N(T)\simeq\mathbf P^2\setminus \mathcal Q$ is an RR-space, and the conclusion of Theorem~\ref{thm:main}\textup{(a)} holds with $n\le3$.

If $\delta_H=1$, the volume-preserving refinement in Theorem~\ref{thm:main}\textup{(d)} also holds.
\end{proposition}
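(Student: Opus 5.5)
The plan is to pass to the effective quotient and then run the case analysis of Section~\ref{sec:cases}, checking at each step where connectedness of $H$ was actually used. Put $\bar G=G/N$ and $\bar H=H/N$. As before, $\bar G$ is connected semisimple, $\bar H$ is reductive (though possibly disconnected), and $X\simeq\bar G/\bar H$ with $\dim\bar H>0$.

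\emph{Case $\rk\bar G\ge2$.} Section~\ref{sec:solvable} explicitly made no connectedness assumption, so Propositions~\ref{prop:model} and~\ref{prop:principal} apply verbatim. The ingredients need to be rechecked: Matsushima affineness and the closed embedding use only reductivity; irreducibility of $X$ comes from connectedness of $\bar G$; Lemma~\ref{lem:fullsupport} uses effectivity; and Lemma~\ref{lem:transverse} uses $\dim\bar H>0$ only through a vector of $\h$ with nonzero projection to some simple ideal. This gives sparse normalization by one compact flow followed by at most five flows, together with two-flow extensions of self-injections of $D_0$.

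\emph{Case $\rk\bar G=1$.} Here $\bar H^\circ$ is a maximal torus and $\bar H\subseteq N(T)$, so $\bar H$ is either $T$ or $N(T)$. The first is the connected case already treated. The second is $\mathbf P^2\setminus\mathcal Q$, handled in Section~\ref{subsec:antipodal}: one compact congruence followed by two descended shears normalizes sparse sequences, so $n\le3$, and two shears extend self-injections. For this case the exhaustion $\rho([S])=\|S\|$ is positive and proper, and the thresholds come from the modified small-ball argument given there. Note that the trivial-$\bar H$ (group) case is excluded by the hypothesis $\dim(H/N)>0$, which is exactly why the finite nontrivial case is left open.

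\emph{Deducing (c), the RR axioms, and the corollaries.} Part~(c) and the RR axioms then follow word for word from Section~\ref{sec:tameness}, since that argument uses only (a), (b), $\Autfl(X)\subseteq\Gamma$, and the properties of $\rho$. For axiom (RR4), take a positive-root field with first integral $q$ in the rank-$\ge2$ case. In the conic case, take the descended upper shear field $c^2\cdot V_{E_{12}}$, or simply $V_{E_{12}}$ composed with the function $c^2$, which is invariant and descends. Corollary~\ref{cor:oka} follows as before from Winkelmann's theorem. For Corollary~\ref{cor:line-normal-form}, flexibility still holds because the nilpotent action fields span tangent spaces on any homogeneous $X$; and a nontrivial root-subgroup orbit is still a closed $\mathbb A^1$, since unipotent orbits on affine varieties are closed.

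\emph{Part (d) when $\delta_H=1$.} Lemma~\ref{lem:volume-character} supplies a $G$-invariant form $\omega$. The divergence computation in Section~\ref{sec:tameness} shows that every variable-coefficient field with invariant coefficients preserves $\omega$, and compact flows preserve it as well. In the conic case $\delta_H\ne1$ by Remark~\ref{rem:conic-volume}, so nothing needs to be checked there.

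The main obstacle is verifying that nothing in Sections~\ref{sec:compact} and~\ref{sec:solvable} silently relied on $H$ being connected. The only plausible point is irreducibility of $X$, which is needed for nonemptiness of intersections of open sets and for the density argument in Lemma~\ref{lem:generic}. This is secured by connectedness of $G$ rather than of $H$.
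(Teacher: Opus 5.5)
Your proposal is correct and follows the paper's proof essentially step for step. You pass to the effective quotient, invoke Proposition~\ref{prop:principal} in rank at least two (it never uses connectedness), split rank one into $T$ and $N(T)$ via Sections~\ref{subsec:quadric} and~\ref{subsec:antipodal}, and rerun the tameness, RR-axiom, Oka, affine-line, and divergence arguments; your added observation that $\delta_H\ne1$ in the conic case, so that (d) is vacuous there, is consistent with Remark~\ref{rem:conic-volume}.
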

\begin{proof}
Replace $G,H$ by $G/N,H/N$. The effective group is semisimple, the isotropy is reductive and positive-dimensional, and the variety is unchanged. In rank at least two, Proposition~\ref{prop:principal} applies. Its proof uses $\Lie(H)$ in the transversality step and imposes no connectedness condition. The compact action is itself a complete flow map, so $n\le6$ in part \textup{(a)}.

In rank one, the inverse image of the isotropy in $\SL_2$ is $T$ or $N(T)$, as in Section~\ref{subsec:antipodal}. Sections~\ref{subsec:quadric} and~\ref{subsec:antipodal} normalize sufficiently sparse sequences by three flows and extend injective self-maps of the fixed sequence by two flows. In all cases these maps lie in $\Autfl(X)$. The subgroup and threshold arguments above therefore apply. The last RR axiom uses a nonzero root field with first integral $q$ in the principal case, and the upper field with first integral $c$ or $c^2$ in rank one. The Oka-complement result follows from the RR property; the affine-line argument applies because these homogeneous spaces are flexible. Finally, if $\delta_H=1$, Lemma~\ref{lem:volume-character} supplies the invariant form and the divergence calculation applies to every constructed field. The form and the action are unchanged by passage to the effective quotient.
\end{proof}

The left-right reduction in Section~\ref{sec:cases} requires trivial effective isotropy and therefore does not apply when $H/N$ is finite and nontrivial.

\section{Spherical quotients}\label{sec:spherical}

Throughout this section $G$ is simple of rank at least two. For a sufficiently sparse sequence in a positive-dimensional closed orbit in an irreducible module, one compact group element makes the values of a highest-weight linear function distinct and discrete. If a Borel orbit is open, three further flows send the sequence to fixed points $p_m$.

\begin{theorem}\label{thm:sparse}
Let $V$ be a nontrivial finite-dimensional irreducible rational $G$-module, and let $X\subset V$ be a positive-dimensional closed $G$-orbit. Suppose a Borel subgroup has an open orbit on $X$. Fix a compact real form $K$ and a $K$-invariant Hermitian norm on $V$.

There are a closed discrete sequence $p_m\in X$ and thresholds $R_1=0<R_2<\cdots\to\infty$ with the following property. For every sequence of distinct points $x_m$ satisfying $\|x_m\|>R_m$ for $m\ge2$, there are flow maps $\Phi_1,\Phi_2,\Phi_3$ and $k\in K$ such that
\[
 (\Phi_3\circ\Phi_2\circ\Phi_1)(kx_m)=p_m\qquad(m\ge1).
\]
Every injective self-map of $\{p_m\}$ is realized by two complete flows. For every subgroup $\Autfl(X)\subseteq\Gamma\subseteq\Aut(X)$, the conclusions of Theorem~\ref{thm:main}(c) hold with this choice of $D_0$.

If $X$ admits a $G$-invariant algebraic volume form $\omega$, all these flows preserve it, and the same conclusions hold for every subgroup containing $\Autvfl(X)$. Existence of the form is characterized by Lemma~\ref{lem:volume-character}.
\end{theorem}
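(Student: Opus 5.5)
Fix a Borel subgroup $B=TU$ with open orbit $B\cdot o\subset X$, and let $q\in V^*$ be a lowest-weight vector for the action on points. Equivalently, $q$ is a highest-weight vector of $V^*$ for the usual action on functions, so it is $U$-fixed and an eigenfunction of $T$ with character $\chi$. By Corollary~\ref{cor:irreduciblespread}, Proposition~\ref{prop:position} applies to the scalar map $Q=q$, with $\Omega_m=B\cdot o\cap\{q\ne0\}$ for all $m$. This is nonempty because $X$ spans $V$ by irreducibility and $B\cdot o$ is dense. So one element $k\in K$ puts every $kx_m$ into the open $B$-orbit, with $q(kx_m)$ pairwise distinct, nonzero, and of modulus $>m$. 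These values therefore form a closed discrete set.

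The plan then mimics Proposition~\ref{prop:principal}, replacing $S_1S_2$ by $UT$. Since $kx_m\in B\cdot o=U\cdot(T\cdot o)$, write $kx_m=u_m\tau_m o$ with $u_m\in U$ and $\tau_m\in T$. Corollary~\ref{cor:unipotent}, applied to the $U$-invariant function $q$, gives one flow $\Phi_1$ with $\Phi_1(kx_m)=u_m^{-1}kx_m=\tau_m o$. This flow preserves $q$, so the values $q(\tau_m o)=q(kx_m)$ remain distinct and discrete.

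It remains to move the points $a_m=\tau_m o$ within the single orbit $T\cdot o$ to fixed points $p_m=t_m o$. For this I need a second regular eigenfunction $r$ with character $\psi$ such that $d\chi,d\psi$ are independent and $r(o)\ne0$. Then I choose $C\in\tlie$ with $d\chi(C)=d\psi(C)=1$ and set $p_m=\exp((\log m)C)o$, after normalizing $q(o)=r(o)=1$. This gives $q(p_m)=r(p_m)=m$, so the $p_m$ are closed discrete. Proposition~\ref{prop:torusmatch} then sends $a_m$ to $p_m$ using two torus flows, and it also realizes every self-injection $p_m\mapsto p_{\sigma(m)}$ with two flows. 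That gives three flows after $k$ in total.

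The main obstacle is producing $r$. I would take $r=n_{w_0}\cdot q$, which has character $w_0\chi$, if $w_0\lambda\ne-\lambda$. In that case independence holds as in Lemma~\ref{lem:transverse}. Otherwise I would use the $B$-eigenfunctions on the spherical variety: the weight monoid of $\C[X]^U$ generates a lattice of rank equal to $\dim T\cdot o\ge1$. If that rank is at least two, pick an eigenfunction $r$ with weight independent of $\lambda$; it is nonzero on the open orbit. If the rank is one, the orbit $T\cdot o$ is one-dimensional and $q$ alone already separates it up to the finite stabilizer. In that case I would instead interpolate with a single torus flow along $(\ker\chi)$-complement directions, checking directly that $q(p_m)=m$ determines $p_m$ and that two flows still suffice for self-injections (first a flow interpolated at $q$, then adjusting the finite ambiguity). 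Some care is needed here, and this case split is the step I expect to require the most work.

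For the remaining conclusions I would proceed as in Section~\ref{sec:tameness}. The equivalence in Theorem~\ref{thm:main}(c) follows verbatim from normalization plus the extension of self-injections, using $\rho=\|\cdot\|$. For volume preservation, each variable-coefficient field has coefficients that are invariant under the acting subgroup, so its $\omega$-divergence vanishes. The compact action is a single flow map preserving $\omega$.
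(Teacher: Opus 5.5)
There is a genuine gap in your construction of $r$ when $w_0\lambda=-\lambda$. This case is not marginal: it contains, for instance, $\SO_n/\SO_{n-1}$ and $F_4/\Spin_9$ from Table~\ref{tab:positive}, both of spherical rank one. Your claim that the weight lattice of $\C[X]^U$ has rank $\dim T\cdot o$ is false for a general point $o$ of the open $B$-orbit. On the quadric $\{v\in\C^n:(v,v)=1\}$ with $n\ge5$, a generic point has nonzero coordinates for all the weights $\pm\varepsilon_i$, so its $T$-orbit has dimension $\lfloor n/2\rfloor\ge2$, although the spherical rank is one. Moreover, the rank-one fallback cannot be carried out with the flows of Lemma~\ref{lem:groupflow} even when $T\cdot o$ really is one-dimensional. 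A one-parameter subgroup of $T$ that moves $o$ sweeps out all of $T\cdot o$. Hence every function invariant under it is constant on $T\cdot o$ and cannot separate the points $a_m$. Interpolating its time at $q$ instead does not give a complete flow, since that subgroup changes $q$; compare the Remark at the end of Section~\ref{sec:matrices}.

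The missing observation is that $r$ need not be $U$-invariant or a $B$-eigenfunction. Proposition~\ref{prop:torusmatch} only uses that $q$ and $r$ are $T$-eigenfunctions with independent differentials, and your own choice $r=n_{w_0}\cdot q$ in the first case is already of this kind. In general, the differential of the nontrivial representation $V^*$ of the simple Lie algebra $\g$ is injective. So no nonzero element of $\tlie$ is annihilated by all weights of $V^*$, and these weights span $\tlie^*$. Since $\dim T\ge2$ in this section, some weight of $V^*$ is not proportional to $\lambda$; take any weight vector $r\in V^*$ of that weight. Because $X$ spans $V$, $r\not\equiv0$ on $X$, and by irreducibility you can choose $o$ in the open $B$-orbit with $q(o)r(o)\ne0$. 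This is the paper's Lemma~\ref{lem:twofunctions}, and it removes the case split entirely. The rest of your argument then coincides with the paper's proof: one compact element, one $U$-flow interpolated at $q$, two torus flows from Proposition~\ref{prop:torusmatch}, and part (c) and volume preservation as in Section~\ref{sec:tameness}.
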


Fix $B=TU$. We use highest-weight theory as in \cite{Borel,Humphreys}.

\subsection{The two linear functions}

\begin{lemma}\label{lem:twofunctions}
There are nonzero linear forms $q,r\in V^*$, characters $\chi,\psi$ of $T$ with independent differentials, and a point $o$ in the open Borel orbit such that
\begin{equation}\label{eq:twofunctions}
 q(ux)=q(x),\qquad q(tx)=\chi(t)q(x),\qquad
 r(tx)=\psi(t)r(x),\qquad q(o)=r(o)=1.
\end{equation}
Here $u\in U$, $t\in T$, and the first three identities hold on all of $V$.
\end{lemma}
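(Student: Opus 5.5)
Take $q$ to be a lowest-weight vector of $V^*$ for the contragredient action, equivalently a linear form annihilating every $T$-weight space of $V$ other than the lowest one $V_{\lambda_{\min}}$. Then $q$ is $U$-invariant: for $u\in U$ and $v$ in a weight space of weight $\nu$, $uv-v$ lies in the sum of weight spaces of weights strictly higher than $\nu$, so $q(uv)=q(v)$ on all of $V$. On weight vectors, $q(tv)=\chi(t)q(v)$ with $\chi$ the lowest weight $w_0\Lambda$ of $V$, where $\Lambda$ is the highest weight. For $r$ take $r=n_w\cdot q$ for a Weyl representative, so that $r$ is a $T$-eigenform of character $\psi=w\chi$. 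Only the $T$-eigen property is required of $r$, and the identities in \eqref{eq:twofunctions} hold on all of $V$ because they are linear identities checked on weight vectors.

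The choice of $w$ follows the argument of Lemma~\ref{lem:transverse}. If $w_0\Lambda\ne-\Lambda$, take $w=w_0$, so $\psi$ is the highest weight $\Lambda$, independent of $w_0\Lambda$. Otherwise $\chi=-\Lambda$, and we take $w=s_\alpha$ for a simple root $\alpha$ with $c=\langle\Lambda,\alpha^\vee\rangle>0$. This gives $\psi=-\Lambda+c\alpha$. Because $G$ is simple of rank at least two, $\Lambda$ is not proportional to $\alpha$, by the dominance argument in that lemma. Hence $d\chi,d\psi$ are independent.

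The remaining step is to find $o$ in the open Borel orbit $\mathcal O_B$ with $q(o)r(o)\ne0$, and then rescale $q$ and $r$. This step is the obstacle, since we need both forms to be nonzero on $X$ and nonvanishing generically. Each form is not identically zero on $X$: by Corollary~\ref{cor:irreduciblespread}, the $K$-translates (hence $G$-translates) of any nonzero form span $V^*$. If $q$ vanished on the $G$-stable set $X$, every translate would vanish on $X$, forcing $X\subset\{0\}$. The same holds for $r$. Since $X$ is irreducible, $\{qr\ne0\}\cap\mathcal O_B$ is a nonempty Zariski-open set. Choose $o$ there and replace $q,r$ by $q/q(o)$ and $r/r(o)$.

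In the write-up I would also check that the choice of $r$ is compatible with the later three-flow argument. There one wants $q$ invariant under $U(\ker\chi)^\circ$, and the torus interpolation of Proposition~\ref{prop:torusmatch} then needs only the eigen-properties of $q$ and $r$ and the independence of $d\chi,d\psi$. If it later turns out that $r$ must be $U$-invariant for some other Borel subgroup, the choice $r=n_w\cdot q$ supplies this automatically.
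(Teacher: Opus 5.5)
Your proof is correct. For $q$ and for the choice of $o$ it follows the paper's argument: the form you actually use, supported on the lowest weight space $V_{w_0\Lambda}$, is exactly the paper's $q\in(V^*)^U$. Your observation that the $G$-translates of a nonzero form span $V^*$ is the dual of the paper's remark that $X$ spans $V$.

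There is one naming slip. Under the contragredient action, that form is the \emph{highest}-weight vector of $V^*$, of weight $-w_0\Lambda$. The lowest-weight vector of $V^*$ is the form supported on $V_\Lambda$, which is $U^-$-invariant rather than $U$-invariant. So drop the ``equivalently'' and keep the concrete description, which is the one your $U$-invariance argument uses.

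You genuinely differ from the paper in the choice of $r$. The paper notes that the weights of $V^*$ span $\tlie^*$, because the differential of a nontrivial representation of the simple algebra $\g$ is injective. Since $\dim T\ge2$, it takes an eigenform of any weight not proportional to that of $q$; no Weyl group element or case analysis is needed.

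You instead take the extremal eigenform $r=n_w\cdot q$ of character $w\chi$. You then reuse the case split of Lemma~\ref{lem:transverse}: $w=w_0$ when $w_0\Lambda\ne-\Lambda$, and $w=s_\alpha$ with $\langle\Lambda,\alpha^\vee\rangle>0$ otherwise. This is correct but longer. The extra structure it gives, invariance of $r$ under $n_wUn_w^{-1}$, is not used in Section~\ref{sec:spherical}, because Proposition~\ref{prop:torusmatch} needs only the eigen-properties of $q,r$ and the independence of $d\chi,d\psi$.
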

\begin{proof}
Choose a nonzero highest-weight vector $q\in(V^*)^U$ for the contragredient action. This one-dimensional space is $T$-stable. With convention \eqref{eq:weightconvention}, $q$ transforms by a character $\chi$. Its differential is nonzero: a highest-weight vector of weight zero in an irreducible module would make that module trivial.

The weights of $V^*$ span $\tlie^*$. Indeed, the differential of the nontrivial representation of the simple Lie algebra $\g$ has zero kernel. An element of $\tlie$ annihilated by all weights would act as zero on $V^*$ and hence be zero. Since $\dim T\ge2$, a weight not proportional to the weight of $q$ occurs. Choose an eigenvector $r$ of that weight and write $r(tx)=\psi(t)r(x)$.

The orbit $X$ spans $V$: its span is a nonzero $G$-submodule. Hence neither $q$ nor $r$ vanishes identically on $X$. The homogeneous space $X$ is irreducible, and its open Borel orbit is dense. That orbit therefore meets $\{q\ne0\}\cap\{r\ne0\}$. Choose $o$ in the intersection and rescale the two linear forms so that $q(o)=r(o)=1$.
\end{proof}

Write
\begin{equation}\label{eq:borelcell}
 \Omega=B\cdot o=UT\cdot o.
\end{equation}
The function $q$ does not vanish on $\Omega$, since it is $U$-invariant and transforms by a character under $T$. The choice of $o$ ensures that $r$ is nonzero on the torus orbit used below.

\subsection{The fixed sequence}
By independence, choose $A_0\in\tlie$ satisfying
\begin{equation}\label{eq:A0}
 d\chi(A_0)=d\psi(A_0)=1.
\end{equation}
For $m\in\N$ let
\begin{equation}\label{eq:model}
 s_m=\exp((\log m)A_0),\qquad p_m=s_mo,
 \qquad D_0=\{p_m:m\in\N\}.
\end{equation}
Equations \eqref{eq:twofunctions} and \eqref{eq:A0} give
\begin{equation}\label{eq:modellabels}
 q(p_m)=r(p_m)=m.
\end{equation}
The equality $q(p_m)=m$ shows that the points are distinct and have no accumulation point in $X$, so they form a closed discrete subset.

\subsection{Sequences in the open Borel orbit}

\begin{proposition}\label{prop:positioned}
Let $x_m\in\Omega$ be distinct, and assume that $q(x_m)$ are pairwise distinct and form a closed discrete subset of $\C$. Then three complete holomorphic flow maps carry $x_m$ to $p_m$ for every $m$.
\end{proposition}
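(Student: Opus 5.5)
Each $x_m\in\Omega=UT\cdot o$ can be written as $x_m=u_mt_mo$ with $u_m\in U$ and $t_m\in T$; choose such a factorization independently for each $m$. The plan has three steps. First, a unipotent flow removes $u_m$. Then two torus flows move the torus-orbit points $t_mo$ to $p_m=s_mo$. The second stage is exactly the interpolation on a single torus orbit already available in Proposition~\ref{prop:torusmatch}.

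\emph{Step 1 (one unipotent flow).} By \eqref{eq:twofunctions}, $q$ is $U$-invariant on all of $V$, hence on $X$. The values $q(x_m)$ are pairwise distinct and closed discrete by hypothesis. Apply Corollary~\ref{cor:unipotent} with the group $U$, the invariant map $Q=q$, and the elements $u_m^{-1}$. This gives one complete flow whose time-one map $\Phi_1$ satisfies $\Phi_1(x_m)=u_m^{-1}x_m=t_mo=:a_m$. The flow preserves $q$, so $q(a_m)=q(x_m)$. These values are still pairwise distinct and closed discrete. Moreover $a_m\in T\cdot o$.

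\emph{Step 2 (two torus flows).} Set $b_m=p_m=s_mo$, which also lies in $T\cdot o$. By \eqref{eq:modellabels}, $r(b_m)=m$, so these values are pairwise distinct and closed discrete. The functions $q,r$ are holomorphic on $X$, being restrictions of linear forms. They transform under $T$ by characters $\chi,\psi$ whose differentials are independent, by Lemma~\ref{lem:twofunctions}. The hypotheses of Proposition~\ref{prop:torusmatch} are therefore satisfied for the sequences $a_m$ and $b_m$. That proposition gives two complete flow maps $\Phi_2,\Phi_3$ with $(\Phi_3\circ\Phi_2)(a_m)=p_m$. Combining the two steps, $\Phi_3\circ\Phi_2\circ\Phi_1$ sends $x_m$ to $p_m$ for every $m$.

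The only point needing care is the intermediate step of Proposition~\ref{prop:torusmatch}. There the first torus flow is interpolated at $q(a_m)$, and we must know that this is a legitimate interpolation set. This holds because the unipotent flow preserved $q$. The second torus flow is interpolated at $r(b_m)=m$, which needs no hypothesis on the original sequence. One might worry that $r$ could vanish somewhere along the way. However, Proposition~\ref{prop:torusmatch} only uses $r(a'_m)=r(b_m)$, which follows from $T_r$-invariance of $r$. So nonvanishing of $r$ is needed only at the targets, where it equals $m$. Thus I do not expect a genuine obstacle: the main work was already done in Lemma~\ref{lem:twofunctions}, which provides $o$ and the independent characters, and in Proposition~\ref{prop:torusmatch}.
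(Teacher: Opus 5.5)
Your proposal is correct and is essentially the paper's own proof. One unipotent flow from Corollary~\ref{cor:unipotent}, interpolated at the $q$-preserved values $q(x_m)$, removes $u_m$; then Proposition~\ref{prop:torusmatch}, applied to $a_m=t_mo$ and $b_m=p_m$ in the single orbit $T\cdot o$ with $r(p_m)=m$, supplies the two torus flows.
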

\begin{proof}
Choose, independently for each $m$, a factorization
\begin{equation}\label{eq:factorpoints}
 x_m=u_mt_mo,\qquad u_m\in U,\quad t_m\in T.
\end{equation}

Since $q$ is $U$-invariant, Corollary~\ref{cor:unipotent} provides a complete flow with time-one map $\Phi_1$ satisfying
\begin{equation}\label{eq:Uoutput}
 \Phi_1(x_m)=u_m^{-1}x_m=t_mo=:a_m.
\end{equation}
Explicitly, choose an entire map $F:\C\to\ulie$ with
\[
 F(q(x_m))=\log(u_m^{-1}),
\]
and take $\Phi_1(x)=\exp(F(q(x)))x$. It preserves $q$, so the values $q(a_m)=q(x_m)$ are still pairwise distinct and form a closed discrete subset of $\C$.

Both $a_m=t_mo$ and $p_m=s_mo$ lie in $T\cdot o$. The values $r(p_m)=m$ are also pairwise distinct and form a closed discrete subset of $\C$. Proposition~\ref{prop:torusmatch} now gives two complete flow maps $\Phi_2,\Phi_3$ satisfying
\[
 \Phi_3\Phi_2(a_m)=p_m.
\]
Together with \eqref{eq:Uoutput}, this proves the proposition.
\end{proof}

Let $a'_m$ be the intermediate points after the first torus flow in Proposition~\ref{prop:torusmatch}, so $r(a'_m)=r(p_m)=m$. The three flows use the following invariant functions:
\[
\begin{array}{c|c|c|c}
\text{points of }X&\text{acting group}&\text{invariant function}&\text{interpolate at}\\ \hline
x_m=u_mt_mo&U&q&q(x_m)\\
a_m=t_mo&T_q&q&q(a_m)=q(x_m)\\
a'_m&T_r&r&r(a'_m)=r(p_m)=m.
\end{array}
\]

\begin{proof}[Proof of the normalization assertion in Theorem~\ref{thm:sparse}]
The closed positive-dimensional orbit $X$ does not contain zero, since zero is a fixed point. Apply Proposition~\ref{prop:position} and Corollary~\ref{cor:irreduciblespread} to $q$, with $\Omega_m=\Omega$ for every $m$. They give thresholds and one $k\in K$ for which $kx_m\in\Omega$, the values $q(kx_m)$ are nonzero and pairwise distinct, and their moduli tend to infinity. Proposition~\ref{prop:positioned} supplies the three remaining flows.
\end{proof}

\subsection{Self-injections of the fixed sequence}

For an injection $\sigma:\N\to\N$, apply Proposition~\ref{prop:torusmatch} to $a_m=p_m$ and $b_m=p_{\sigma(m)}$. By \eqref{eq:modellabels}, $q(a_m)=m$ and $r(b_m)=\sigma(m)$. Both sequences of numbers are pairwise distinct and closed discrete, so two complete torus flows realize the self-injection.

The compact action is itself one complete flow. The proof of Theorem~\ref{thm:main}(c) uses normalization of sufficiently sparse sequences and extension of injective self-maps of the fixed sequence inside the chosen subgroup, so it applies here to every $\Gamma$ containing $\Autfl(X)$. If an invariant volume form $\omega$ exists, the divergence calculation in Section~\ref{sec:tameness} applies to all these flows, and the same argument works for every $\Gamma$ containing $\Autvfl(X)$. This completes the proof of Theorem~\ref{thm:sparse}.

\begin{corollary}\label{cor:sphericalRR}
Every $X$ in Theorem~\ref{thm:sparse} is an RR-space.
\end{corollary}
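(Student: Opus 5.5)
The plan is to verify Winkelmann's four axioms (RR1)--(RR4) for the family $\cT=\Aut(X)\cdot D_0$, where $D_0=\{p_m\}$ is the fixed sequence of Theorem~\ref{thm:sparse}, following the argument already given for Theorem~\ref{thm:RR}. Take the exhaustion $\rho(x)=\|x\|$ for the $K$-invariant Hermitian norm on $V$. It is proper because $X$ is closed in $V$, and positive because the positive-dimensional orbit $X$ does not contain the $G$-fixed point $0$. Invariance (RR3) holds by definition of $\cT$.

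For (RR1), let $D=\Phi(D_0)$ and $E=\Phi'(D_0)$ be members of $\cT$ and $f:D\to E$ an injection. Theorem~\ref{thm:sparse} gives the conclusions of Theorem~\ref{thm:main}(c) with $\Gamma=\Aut(X)$. Members of $\cT$ are $\Aut(X)$-equivalent to $D_0$, hence strongly and therefore weakly tame. By part (c), every injection between weakly tame sets extends to an automorphism. Concretely, conjugate the two-flow realizations of self-injections of $D_0$ by $\Phi,\Phi'$, after enumerating $D$ and $f(D)$ and normalizing each to $(p_m)$.

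For (RR2), set $r_n=R_{n+1}$, with $R_m$ the thresholds of Theorem~\ref{thm:sparse}. Let $D$ be an infinite set satisfying the counting inequalities. Since $r_n\to\infty$ and $\rho$ is proper, $D$ is closed discrete. Enumerate $D$ by nondecreasing $\rho$. Then $\rho(x_m)>r_{m-1}=R_m$ for $m\ge2$, and $\rho(x_1)>0=R_1$. Theorem~\ref{thm:sparse} produces $k\in K$ and three flows sending $x_m$ to $p_m$, so $D\in\cT$.

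For (RR4), take a nonzero vector $\xi\in\ulie$ and its action field $V_\xi$. This field is complete, and it annihilates the $U$-invariant linear form $q$ of Lemma~\ref{lem:twofunctions}. The form $q$ is nonconstant on $X$ because $q(p_m)=m$.

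The only point needing care is that $V_\xi$ is not identically zero on $X$. If it vanished on $X$, it would vanish on the span of $X$, which is $V$ because that span is a nonzero submodule of the irreducible module $V$. Then $\xi$ would act as zero on the nontrivial irreducible module $V$ of the simple Lie algebra $\g$. This contradicts the injectivity of that representation, used in the proof of Lemma~\ref{lem:twofunctions}.
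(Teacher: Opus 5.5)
Your proof is correct and takes the same route as the paper. The paper likewise reuses the interpolation, counting ($r_n=R_{n+1}$), and invariance arguments of Section~\ref{sec:tameness} for $\cT=\Aut(X)\cdot D_0$. It proves the fourth axiom exactly as you do: a nonzero element of $\ulie$ annihilates $q$ and cannot vanish on $X$, because $X$ spans the irreducible module $V$, on which the differential of the nontrivial representation of the simple Lie algebra $\g$ is faithful.

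One small tidying point for (RR1): the direct conjugation $\Phi'\circ\Psi_\sigma\circ\Phi^{-1}$ already suffices and needs no renormalization. Here $\sigma$ is defined by $f(\Phi(p_m))=\Phi'(p_{\sigma(m)})$, and $\Psi_\sigma$ is the two-flow realization of $p_m\mapsto p_{\sigma(m)}$.
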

\begin{proof}
The interpolation, counting, and invariance arguments in Section~\ref{sec:tameness} apply to this sequence and these thresholds. For the fourth axiom, choose a nonzero positive-root vector $\xi\in\ulie$. Its action field annihilates $q$ and is nonzero on $X$: otherwise $\xi$ would annihilate the linear span $V$ of $X$, contradicting faithfulness of the differential of a nontrivial representation of the simple Lie algebra $\g$. The function $q$ is nonconstant because $q(p_m)=m$.
\end{proof}

\subsection{Losev's embedding criterion}

For a spherical quotient, Losev's theorem gives an intrinsic criterion for the embedding used in Theorem~\ref{thm:sparse}. Let $G$ be connected reductive and $H\subset G$ reductive. Write $\Aut_G(G/H)$ for the group of automorphisms commuting with the $G$-action. Such an automorphism is determined by the image of the base coset. With the convention
\[
 nH\longmapsto\bigl(gH\longmapsto gn^{-1}H\bigr),
\]
we obtain a group isomorphism $N_G(H)/H\simeq\Aut_G(G/H)$.

Losev uses a subgroup of this equivariant automorphism group. Write $\mathfrak A_G(X)$ for the \emph{central equivariant automorphisms}: these act by a scalar on every space of rational $B$-semi-invariant functions of a fixed weight. His criterion \cite[Theorem~1.4]{Losev} states that
\begin{equation}\label{eq:Losev}
 \begin{gathered}
 G/H\text{ embeds equivariantly as a closed subvariety}\\
 \text{of an irreducible }G\text{-module}\\
 \Longleftrightarrow\quad \mathfrak A_G(G/H)\text{ is finite cyclic}.
 \end{gathered}
\end{equation}
\begin{lemma}\label{lem:sphericalcentral}
If $H$ is spherical, then every $G$-equivariant automorphism of $G/H$ is central. In particular,
\[
 \mathfrak A_G(G/H)=\Aut_G(G/H)\simeq N_G(H)/H.
\]
\end{lemma}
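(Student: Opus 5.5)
We will show that an arbitrary $\phi\in\Aut_G(G/H)$ acts by a scalar on every space $\C(X)^{(B)}_\lambda$ of rational $B$-semi-invariant functions of a fixed weight $\lambda$. The isomorphism $N_G(H)/H\simeq\Aut_G(G/H)$ described just before the lemma then gives the displayed identification. The key input is multiplicity-freeness. Since $H$ is spherical, a Borel subgroup $B$ has an open orbit $\Omega_B\subset X=G/H$.

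\emph{Step 1: weight spaces are lines.} Let $f_1,f_2\in\C(X)^{(B)}_\lambda$ be nonzero. Their quotient $f_1/f_2$ is a rational $B$-invariant function. It is therefore constant on the open orbit $\Omega_B$, hence constant on $X$. So each space $\C(X)^{(B)}_\lambda$ is either zero or one-dimensional.

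\emph{Step 2: it is preserved.} An equivariant automorphism $\phi$ commutes with $B$. Hence $f\mapsto f\circ\phi^{-1}$ maps $B$-semi-invariants of weight $\lambda$ to $B$-semi-invariants of the same weight. Note that $\phi$ also preserves $\Omega_B$, since it permutes $B$-orbits and preserves dimension. Consequently $\phi$ preserves each weight space $\C(X)^{(B)}_\lambda$, and by Step 1 it acts on that space by a scalar. This is exactly the definition of a central equivariant automorphism, so $\mathfrak A_G(G/H)=\Aut_G(G/H)$.

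\emph{Possible subtlety.} Losev's definition may require the scalar action on $B$-semi-invariants in $\C(X)$, or only in $\C[X]$. Step 1 covers rational functions, so both versions are handled. If Losev's definition instead asks that $\phi$ act through a specific torus, this will need to be checked against \cite{Losev}. This comparison with his precise convention is the only step I expect to require care; the mathematical content is the one-dimensionality in Step 1.
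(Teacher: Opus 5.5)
Your proposal is correct and follows the paper's proof essentially verbatim. Both arguments show that each rational weight space is at most one-dimensional, because the ratio of two semi-invariants of the same weight is $B$-invariant and hence constant on the open $B$-orbit; an equivariant automorphism then preserves each weight space and so acts on it by a scalar. Your remark that $\phi$ preserves the open orbit is true but not needed. Your concern about Losev's convention is settled by the paper's own definition of $\mathfrak A_G$, which uses rational semi-invariants — exactly the case you treat.
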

\begin{proof}
Two nonzero rational $B$-semi-invariants of the same weight have a $B$-invariant ratio. A rational $B$-invariant function is constant on the open $B$-orbit and hence constant in the function field. Thus every nonzero rational weight space is one-dimensional.

A $G$-equivariant automorphism commutes with $B$ and preserves each such weight space. On a one-dimensional space it acts by a scalar. This specialization is also recorded in \cite[Example~5.1]{Losev}.
\end{proof}

\begin{proof}[Proof of Corollary~\ref{cor:spherical}]
The finite cyclic hypothesis and Lemma~\ref{lem:sphericalcentral} give a closed equivariant embedding $G/H\hookrightarrow V$ into an irreducible rational module by \eqref{eq:Losev}. The module is nontrivial because $\dim G/H>0$. Sphericity gives an open Borel orbit. Theorem~\ref{thm:sparse} applies. Since the compact action is itself a flow map, its displayed composition gives $n\le4$.
\end{proof}

Finite cyclicity in \eqref{eq:Losev} characterizes the closed embedding into an irreducible module. Theorem~\ref{thm:RR} proves the RR property for connected reductive $H$ regardless of this criterion.

For a nonspherical quotient, $\mathfrak A_G(G/H)$ can differ from the full normalizer quotient. For example, a regular semisimple $\SL_3$-orbit embeds as a closed subvariety of the irreducible adjoint module, and $N_{\SL_3}(T)/T\simeq S_3$. Here $\dim\SL_3/T=6>5=\dim B$, so the quotient is not spherical.

\section{A complete calculation on symmetric \texorpdfstring{$3\times3$}{3 by 3} matrices}\label{sec:matrices}

We give explicit formulas for the flow maps in Theorem~\ref{thm:sparse} when $X=\SL_3/\SO_3$. Set
\[
 X=\{S\in\Sym_3(\C):\det S=1\},\qquad
 S=\begin{pmatrix}a&b&c\\b&d&e\\c&e&f\end{pmatrix}.
\]
The group $\SL_3$ acts by $g\cdot S=gSg^{\mathsf T}$. Take the upper-triangular Borel and its unipotent radical, and put
\[
 q(S)=f,\qquad r(S)=d,\qquad \Delta(S)=df-e^2.
\]
For $t=\diag(t_1,t_2,t_3)$ with $t_1t_2t_3=1$, the characters are $\chi(t)=t_3^2$ and $\psi(t)=t_2^2$. They are independent. The identity matrix lies in the open Borel orbit and has $q(I)=r(I)=1$. The choice
\[
 A_0=\diag(-1,1/2,1/2)
\]
gives the fixed sequence
\[
 p_m=\diag(m^{-2},m,m).
\]

\subsection{The compact congruence}
Equip $\Sym_3(\C)$ with an $\SU_3$-invariant Hermitian norm. Apply Proposition~\ref{prop:position} and Corollary~\ref{cor:irreduciblespread} to $q(S)=S_{33}$ and the nonempty open set $\{f\Delta\ne0\}$. Given a sufficiently sparse input sequence $X_m$, choose $k\in\SU_3$ as in that proposition and write $S_m=kX_mk^{\mathsf T}$, $f_m=f(S_m)$, and $\Delta_m=\Delta(S_m)$. Then $f_m\Delta_m\ne0$ for every $m\ge1$, and the $f_m$ are pairwise distinct and closed discrete in $\C$. Write $k=\exp A$ with $A\in\mathfrak{su}_3$. Define
\[
 \Phi^{(0)}_t(S)=e^{tA}S e^{tA^{\mathsf T}},\qquad t\in\C.
\]
This is a complete holomorphic flow with $\Phi^{(0)}_1(X_m)=S_m$.

\subsection{Removing the unipotent part}
The open Borel orbit is the locus $f\Delta\ne0$. To see the elimination explicitly, write $u_{ij}(t)=I+tE_{ij}$ and set
\begin{equation}\label{eq:matrixparameters}
 y=-\frac ef,\qquad z=-\frac cf,\qquad
 x=-\frac{bf-ce}{\Delta}.
\end{equation}
The congruences $u_{23}(y)$, then $u_{13}(z)$, then $u_{12}(x)$ send $S$ to
\begin{equation}\label{eq:matrixdiag}
 \diag\left(\frac1\Delta,\frac\Delta f,f\right).
\end{equation}
After the first two operations, the upper-left block is
\[
 \begin{pmatrix}
 a-c^2/f&b-ce/f\\
 b-ce/f&d-e^2/f
 \end{pmatrix}.
\]
Its second diagonal entry is $\Delta/f$, so the final operation removes its off-diagonal entry. Since the determinant remains one, the remaining diagonal entry is $1/\Delta$. Conversely, every diagonal matrix of determinant one with nonzero diagonal entries is $T$-congruent to $I$. This proves the open-orbit assertion.

The product acting on $S$ is
\[
 v(S)=u_{12}(x)u_{13}(z)u_{23}(y)
 =\begin{pmatrix}1&x&z+xy\\0&1&y\\0&0&1\end{pmatrix}.
\]
Its nilpotent logarithm is
\begin{equation}\label{eq:matrixlog}
 \log v(S)=xE_{12}+yE_{23}+(z+xy/2)E_{13}.
\end{equation}
Evaluating \eqref{eq:matrixparameters} at $S_m$ gives numbers $x_m,y_m,z_m$. Interpolate these data by an entire strictly upper-triangular matrix function $L:\C\to\ulie$ with
\[
 L(f_m)=x_mE_{12}+y_mE_{23}
       +\left(z_m+\frac{x_my_m}{2}\right)E_{13}.
\]
The entire function $L$ defines a flow on $X$ by
\[
 \Phi^{(1)}_t(S)=\exp\bigl(tL(S_{33})\bigr)S
                  \exp\bigl(tL(S_{33})\bigr)^{\mathsf T}.
\]
It is complete by Lemma~\ref{lem:groupflow}, since every upper-unitriangular congruence preserves $S_{33}$. At time one it sends each $S_m$ to the diagonal matrix \eqref{eq:matrixdiag}, with $f=f_m$ and $\Delta=\Delta_m$.

\subsection{The two torus operations}
Starting with \eqref{eq:matrixdiag}, use
\[
 H_1=\diag(-1,1,0).
\]
Its congruence flow preserves $q=f$. Choose an entire $F_1$ with
\[
 e^{2F_1(f_m)}=\frac{mf_m}{\Delta_m}.
\]
The corresponding flow is
\[
 \Phi^{(2)}_t(S)=e^{tF_1(S_{33})H_1}S
                 e^{tF_1(S_{33})H_1^{\mathsf T}}.
\]
At time one it sends the $m$th diagonal matrix to
\[
 \diag\left(\frac1{mf_m},m,f_m\right).
\]
At this point $r=S_{22}$ has value $m$. The congruence action generated by
\[
 H_2=\diag(-1,0,1)
\]
preserves $r$ on all of $X$. Choose an entire $F_2$ with
\[
 e^{2F_2(m)}=\frac m{f_m}.
\]
The corresponding flow is
\[
 \Phi^{(3)}_t(S)=e^{tF_2(S_{22})H_2}S
                 e^{tF_2(S_{22})H_2^{\mathsf T}}.
\]
At time one it sends the last diagonal entry of each sequence point to $m$. Determinant one then forces the first entry to be $m^{-2}$. Thus
\[
 \Phi^{(3)}_1\circ\Phi^{(2)}_1\circ\Phi^{(1)}_1\circ\Phi^{(0)}_1
\]
normalizes the original sequence to $p_m$. The upper-unipotent flow and the $H_1$-flow preserve $f$, while the $H_2$-flow preserves $r$. Their coefficient functions are therefore invariant along the corresponding flows, and the divergence calculation of Section~\ref{sec:tameness} proves volume preservation. Interpolating the whole matrix logarithm realizes the three elementary unipotent operations by one flow.

\begin{remark}
The map
\[
 S\longmapsto\diag(e^{-f},1,e^f)\,S\,\diag(e^{-f},1,e^f)
\]
is not an automorphism. On $S(t)=\diag(t^{-1},1,t)$, $t\ne0$, it sends $S(t)$ to $S(te^{2t})$. The derivative of this restricted map vanishes at $t=-1/2$.
\end{remark}

For all $n\ge3$, the space $\SL_n/\SO_n$ satisfies Corollary~\ref{cor:spherical}: three flows suffice after the compact action, independently of $n$. Section~\ref{sec:cases} gives two explicit shears after the compact action for $n=2$.

\section{Examples and normalizer quotients}\label{sec:examples}

We specify the group forms and subgroup embeddings because the normalizer quotients depend on them. We use $\Sp_{2a}$ for the symplectic group on a $2a$-dimensional vector space, $\mu_a$ for the group of $a$-th roots of unity, and $C_a$ for an abstract cyclic group of order $a$. All subgroups in the tables are connected. Classical block embeddings preserve the displayed decompositions and forms. Every positive-dimensional quotient in this section is an RR-space by Theorem~\ref{thm:RR}; the calculations determine whether Corollary~\ref{cor:spherical} also applies.

The sphericity of the pairs below is part of the classifications of Kr\"amer and Knop--R\"ohrle \cite{Kraemer,KnopRohrle}; the latter's Tables~1 and~2 include every family used here. We calculate the normalizer quotients directly.

\begin{proposition}\label{prop:examples}
Every pair in Table~\ref{tab:positive} satisfies Corollary~\ref{cor:spherical}.
\end{proposition}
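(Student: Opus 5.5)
The approach is a row-by-row verification of the hypotheses of Corollary~\ref{cor:spherical} for each entry of Table~\ref{tab:positive}. Those hypotheses are: $G$ simple of rank at least two, $H$ closed, connected, reductive and spherical, and $N_G(H)/H$ finite cyclic. Connectedness and reductivity of $H$ are visible from the block descriptions. The rank condition is read off from the group. Sphericity is taken from the classifications of Kr\"amer and Knop--R\"ohrle cited above. The actual work is therefore the computation of $N_G(H)/H$ for each pair, in the specified group form and embedding.

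For each pair I would first note that $N_G(H)$ normalizes $\h$, so $N_G(H)^\circ=H^\circ Z_G(H)^\circ$. In every row the centralizer $Z_G(H)$ will turn out to be finite modulo $H$ by a Schur-lemma argument on the defining representation. The reason is that the natural module restricted to $H$ decomposes into pairwise non-isomorphic irreducibles (or is irreducible), so $Z_G(H)$ consists of block scalars. The determinant or form condition, together with $Z(H)$, then cuts these down to a finite group.

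Hence $N_G(H)/H$ is finite, and it embeds as an extension of
\[
 \bigl(Z_G(H)H/H\bigr)\quad\text{by}\quad\bigl(N_G(H)/Z_G(H)H\bigr)\subseteq\operatorname{Out}(H)
\]
restricted to automorphisms realized in $G$. To control the outer part, I would use that an element of $N_G(H)$ permutes the isotypic components of the natural module. Any permutation of blocks of different dimensions or of non-isomorphic (non-dual) modules is excluded. When two blocks are swappable, or when a diagram automorphism of $H$ is realized in $G$ (for example $\Ort$ versus $\SO$), that is exactly where the quotient can grow. For the rows in this table I expect the normalizer quotient to be a group of scalar matrices $\mu_a$ modulo those in $H$, hence cyclic $C_a$ or trivial. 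Examples are $\SL_n/\SO_n$ with quotient $\mu_n$ modulo $\pm1$ as appropriate, and $\SL_n/\Sp_n$ giving $\mu_n/\mu_2$.

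The main obstacle will be rows where a swap or an outer automorphism is realized in $G$ while scalars also contribute. In those cases $N_G(H)/H$ could be $C_a\times C_2$ and fail to be cyclic. For these rows I would compute explicitly: I would exhibit the swapping element $n$ and compute $n^2$ modulo $H$. If $n^2$ generates the scalar part, the extension is cyclic. If some power is absent, I would recheck whether the row belongs in Table~\ref{tab:positive} at all, since the negative cases presumably sit in a separate table. With cyclicity established row by row, Corollary~\ref{cor:spherical} applies, and positivity of $\dim G/H$ is immediate from $H\ne G$.
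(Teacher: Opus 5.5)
You have the right overall framework. $N_G(H)^\circ=H^\circ Z_G(H)^\circ$ holds for reductive $H$, $Z_G(H)$ is finite modulo $H$ in every row, and $N_G(H)/HZ_G(H)$ embeds in $\operatorname{Out}(H)$. But the proposition is exactly the claim that the resulting finite group is cyclic in each row, and you never establish that. You substitute an expectation and a fallback (``I would recheck whether the row belongs in the table'').

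The one concrete expectation you give is also wrong. For $\SL_n/\SO_n$ with $n$ even, scalar matrices contribute only $\mu_n/\{\pm I\}\cong\mu_{n/2}$, whereas $N_{\SL_n}(\SO_n)/\SO_n\cong\mu_n$ has order $n$. Take $a^n=-1$ and $O\in\Ort_n$ a reflection. Then $aO\in\SL_n$ normalizes $\SO_n$, induces conjugation by $O$ (an outer automorphism of $\SO_n$ for even $n$), and has multiplier $a^2$, which generates $\mu_n$. So the first row is precisely the ``scalars plus outer automorphism'' case you call the main obstacle, not a purely scalar one. Cyclicity there needs the computation $(aO)^2=a^2I$, whose image generates the scalar part.

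Your rule for controlling the outer part is false as stated. An element $g\in N_G(H)$ can carry an $H$-summand to one that is neither isomorphic nor dual to it; what is required is isomorphism after twisting by $\Ad(g)|_H$. The block exchanges for $S(\GL_p\times\GL_p)$ and $\Sp_{2p}\times\Sp_{2p}$ are of exactly this kind. Applied to $\SO_{2m}/(\SO_m\times\SO_m)$ with $m$ even, your rule would discard the swap $J$ and report $C_2$ instead of $C_2\times C_2$. So the rule can produce a false cyclic verdict and cannot certify cyclicity.

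The missing ingredient is a direct computation of $N_G(H)$ from the structure $H$ preserves, as the paper does:
\begin{itemize}
\item A normalizer of $\SO_n$ or $\Sp_{2a}$ in $\SL$ is a similitude of the unique invariant form. The multiplier is then a homomorphism onto $\mu_n$, resp.\ $\mu_a$ (via $\det g=c^a$), with kernel $H$. Cyclicity is automatic and no extension problem arises.
\item In the block rows the normalizer preserves or exchanges the blocks, and an exchange requires equal dimensions.
\item $N_{\SO_n}(\SO_{n-1})$ preserves the fixed line $\C v$ and acts on it by $\pm1$.
\item For $\Spin_9\subset F_4$, which you do not address, full rank and the absence of diagram automorphisms give $N=HZ_G(H)$. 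Then $Z_G(H)\subseteq Z_G(T)=T\subseteq H$ for a maximal torus $T$ of $H$.
\end{itemize}
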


\begin{table}[ht]
\centering
\small
\renewcommand{\arraystretch}{1.25}
\begin{tabular}{@{}lll@{}}
\toprule
$G$ and parameter range & $H$ & $N_G(H)/H$\\
\midrule
$\SL_n$, $n\ge3$ & $\SO_n$ & $\mu_n$\\
$\SL_{2a}$, $a\ge2$ & $\Sp_{2a}$ & $\mu_a$\\
$\SL_{p+q}$, $p,q\ge1$, $p+q\ge3$
 & $S(\GL_p\times\GL_q)$ & $1$ if $p\ne q$; $C_2$ if $p=q$\\
$\Sp_{2p+2q}$, $p,q\ge1$
 & $\Sp_{2p}\times\Sp_{2q}$ & $1$ if $p\ne q$; $C_2$ if $p=q$\\
$\SO_n$, $n\ge5$ & $\SO_{n-1}$ & $C_2$\\
$F_4$ & $\Spin_9$ & $1$\\
\bottomrule
\end{tabular}
\caption{Spherical pairs with a closed equivariant embedding into an irreducible module.}\label{tab:positive}
\end{table}

\subsection{Symmetric and alternating forms}
Consider first $\SO_n\subset\SL_n$, with $n\ge3$. The natural $\SO_n$-module is irreducible and has a unique invariant symmetric form up to scalar. An element $g\in\SL_n$ normalizing $\SO_n$ must therefore be a similitude:
\[
 g^{\mathsf T}g=cI.
\]
Taking determinants gives $c^n=1$. The multiplier is a homomorphism from the normalizer to $\mu_n$ with kernel $\SO_n$. It is surjective. To realize $c\in\mu_n$, choose $a$ with $a^2=c$. Then $a^n=\pm1$, and there is an orthogonal matrix $O$ with determinant $a^{-n}$. The matrix $aO$ has determinant one and multiplier $c$. Thus
\begin{equation}\label{eq:SOnormal}
 N_{\SL_n}(\SO_n)/\SO_n\simeq\mu_n.
\end{equation}

Here a closed embedding is visible without Losev's theorem:
\[
 \SL_n/\SO_n\simeq
 \{S\in\Sym_n(\C):\det S=1\},
 \qquad g\cdot S=gSg^{\mathsf T}.
\]
The ambient symmetric-square module is irreducible. Transitivity follows by factoring a nonsingular symmetric matrix as $gg^{\mathsf T}$ and adjusting the sign of $\det g$ by an orthogonal matrix. The stabilizer of $I$ is $\SO_n$. Section~\ref{sec:matrices} writes the normalizing flows explicitly for $n=3$.

For $\Sp_{2a}\subset\SL_{2a}$, normalize a nondegenerate alternating matrix $J$ by $\Pf(J)=1$. A normalizing element satisfies
\[
 g^{\mathsf T}Jg=cJ.
\]
The Pfaffian identity gives $\det g=c^a$. Thus the multiplier lies in $\mu_a$. Conversely, given $c^a=1$, a square root $b^2=c$ yields $bI\in\SL_{2a}$ with multiplier $c$. The kernel is $\Sp_{2a}$, so
\[
 N_{\SL_{2a}}(\Sp_{2a})/\Sp_{2a}\simeq\mu_a.
\]
The corresponding closed realization is the hypersurface $\Pf(S)=1$ of nondegenerate alternating matrices, inside the irreducible exterior-square module. The range $a\ge2$ excludes the zero-dimensional quotient $\SL_2/\Sp_2$.

\subsection{Two-block quotients}
For
\[
 H=S(\GL_p\times\GL_q)\subset\SL_{p+q},
\]
a general element of the connected center of $H$ has the two blocks as its eigenspaces. An element normalizing $H$ preserves these subspaces or exchanges them. If their dimensions differ, it must preserve both and hence belongs to $H$. If $p=q$, an exchange exists inside $\SL_{2p}$ after adjusting its determinant by a block-diagonal matrix. The block-preserving subgroup is still $H$, so the quotient has order two.

For $H=\Sp_{2p}\times\Sp_{2q}\subset\Sp_{2p+2q}$, the natural module is the sum of two inequivalent irreducible $H$-modules, each nondegenerate for the symplectic form. An element normalizing $H$ permutes these two summands. Unequal dimensions prevent an exchange. An element preserving the two blocks lies in their symplectic product. For equal dimensions the block exchange is symplectic, giving the additional coset. This proves the two block-family entries of Table~\ref{tab:positive}.

\subsection{Quadrics and an exceptional pair}
The subgroup $\SO_{n-1}\subset\SO_n$ fixes a nonisotropic vector $v$. Its fixed-vector space is precisely $\C v$, so the normalizer preserves that line. Such an orthogonal transformation acts on the line by $1$ or $-1$. Those acting by $1$ form $H$, and the second possibility is realized by combining $v\mapsto-v$ with a reflection on $v^\perp$. Therefore the normalizer quotient is $C_2$. The homogeneous space is the affine quadric of vectors with the same nonzero norm as $v$, a closed orbit in the natural irreducible module.

For the standard subgroup $H=\Spin_9\subset F_4$, the subgroup has full rank and its Dynkin diagram has no nontrivial automorphism. Every algebraic automorphism of $H$ is therefore inner. If $g\in N_{F_4}(H)$, multiply it by an element of $H$ so that it centralizes $H$. It then centralizes a maximal torus of $H$, which is also a maximal torus of $F_4$. The centralizer of that torus in $F_4$ is the torus itself, contained in $H$. Thus $N_{F_4}(H)=H$.

These calculations prove Proposition~\ref{prop:examples}.

\subsection{Pairs without a closed embedding into an irreducible module}
Table~\ref{tab:negative} contains spherical pairs for which the closed-embedding criterion fails.

\begin{table}[ht]
\centering
\small
\renewcommand{\arraystretch}{1.25}
\begin{tabular}{@{}lll@{}}
\toprule
$G$ and parameter range & $H$ & $N_G(H)/H$\\
\midrule
$\SL_{p+q}$, $p>q\ge1$ & $\SL_p\times\SL_q$ & $\C^*$\\
$\SL_{2a+1}$, $a\ge2$ & $\Sp_{2a}$ & $\C^*$\\
$\SO_{4a}$, $a\ge2$ & $\SO_{2a}\times\SO_{2a}$ & $C_2\times C_2$\\
\bottomrule
\end{tabular}
\caption{Spherical pairs without a closed equivariant embedding into an irreducible module.}\label{tab:negative}
\end{table}

For the first row, the normalizer of $\SL_p\times\SL_q$ is $S(\GL_p\times\GL_q)$. The natural module has two inequivalent isotypic summands, and their different dimensions prevent a permutation. This also covers $q=1$, where the second summand is the fixed line and $\SL_1$ denotes the trivial group. The homomorphism
\[
 \diag(A,B)\longmapsto\det A
\]
is onto $\C^*$ and has kernel $\SL_p\times\SL_q$. Hence removing the central torus from the two-block Levi subgroup changes a trivial normalizer quotient into a one-dimensional torus. For example, the closed embedding into an irreducible module exists for
\[
 \SL_5/S(\GL_3\times\GL_2),
\]
and fails for $\SL_5/(\SL_3\times\SL_2)$. Both quotients satisfy Theorem~\ref{thm:main}.

For $\Sp_{2a}\subset\SL_{2a+1}$, the natural module is the symplectic representation plus a fixed line. Every element of the normalizer preserves this decomposition. Its elements have the form
\[
 \diag(A,(\det A)^{-1}),\qquad A\in\mathrm{GSp}_{2a},
\]
where $\mathrm{GSp}_{2a}$ is the symplectic similitude group. The multiplier identifies the quotient by $\Sp_{2a}$ with $\C^*$. This is also the example discussed by Losev \cite[Example~5.1]{Losev}. His theorem for embeddings that need not be closed allows a one-dimensional torus, while the closed-embedding theorem requires finite cyclicity.

\subsection{A finite noncyclic quotient and the parity of the block size}
Let $m\ge3$, and put
\[
 G=\SO_{2m},\qquad H=\SO_m\times\SO_m
\]
with the standard orthogonal block decomposition. Its block-preserving normalizer is $S(\Ort_m\times\Ort_m)$. Modulo $H$, this subgroup has order two, represented by
\[
 A=\diag(R,R),
\]
where $R$ is an orthogonal reflection with $R^2=I$. An element of the normalizer either preserves the two irreducible summands or exchanges them, so the full normalizer quotient has four elements.

If $m$ is even, the exchange
\[
 J=\begin{pmatrix}0&I_m\\ I_m&0\end{pmatrix}
\]
has determinant one and satisfies $J^2=I$. It commutes with $A$, and their cosets are distinct nontrivial involutions. Thus
\begin{equation}\label{eq:orthogonaleven}
 N_{\SO_{2m}}(\SO_m\times\SO_m)/H\simeq C_2\times C_2
 \qquad(m\text{ even}).
\end{equation}
This proves the last row of Table~\ref{tab:negative}.

If $m$ is odd, take instead
\[
 J=\begin{pmatrix}0&I_m\\-I_m&0\end{pmatrix}.
\]
It is orthogonal with determinant one, and $J^2=-I_{2m}$. Since $\det(-I_m)=-1$, this square belongs to the nontrivial block-preserving coset, not to $H$. Therefore
\begin{equation}\label{eq:orthogonalodd}
 N_{\SO_{2m}}(\SO_m\times\SO_m)/H\simeq C_4
 \qquad(m\text{ odd}).
\end{equation}
The odd-block pair therefore satisfies Corollary~\ref{cor:spherical}. In particular, $\SO_8/(\SO_4\times\SO_4)$ has no closed equivariant embedding into an irreducible module, whereas $\SO_{10}/(\SO_5\times\SO_5)$ admits one.

\section*{Acknowledgment of assistance}
OpenAI's ChatGPT assisted with proof exploration, literature searches, checks of algebraic identities, and preparation and revision of the manuscript. The author takes full responsibility for all statements and proofs.


\begin{thebibliography}{99}

\bibitem{AndristArzhantsev}
R.~B. Andrist and I.~Arzhantsev,
\emph{Infinite transitivity and polynomial vector fields},
preprint (2026), \href{https://arxiv.org/abs/2605.20480}{arXiv:2605.20480}.

\bibitem{AndristDraismaEtAl}
R.~B. Andrist, J.~Draisma, G.~Freudenburg, G.~Huang, and F.~Kutzschebauch,
\emph{A criterion for the algebraic density property of smooth affine $\SL_2$-varieties},
preprint (2025), revised 2026,
\href{https://arxiv.org/abs/2502.13903v2}{arXiv:2502.13903v2}.

\bibitem{AndristUgolini}
R.~B. Andrist and R.~Ugolini,
\emph{A new notion of tameness},
J. Math. Anal. Appl. \textbf{472} (2019), 196--215.
\href{https://doi.org/10.1016/j.jmaa.2018.11.018}{doi:10.1016/j.jmaa.2018.11.018}.

\bibitem{AndristUgoliniHom}
R.~B. Andrist and R.~Ugolini,
\emph{Tame sets in homogeneous spaces},
Transform. Groups \textbf{30} (2025), 1--18.
\href{https://doi.org/10.1007/s00031-022-09781-1}{doi:10.1007/s00031-022-09781-1}.

\bibitem{Arzhantsev}
I.~V. Arzhantsev,
\emph{Affine embeddings of homogeneous spaces},
in \emph{Surveys in Geometry and Number Theory}, N.~Young (ed.),
London Mathematical Society Lecture Note Series, vol.~338,
Cambridge University Press, Cambridge, 2007, pp.~1--51.
\href{https://doi.org/10.1017/CBO9780511721472.002}{doi:10.1017/CBO9780511721472.002}.

\bibitem{AFKKZ}
I.~Arzhantsev, H.~Flenner, S.~Kaliman, F.~Kutzschebauch, and M.~Zaidenberg,
\emph{Flexible varieties and automorphism groups},
Duke Math. J. \textbf{162} (2013), 767--823.
\href{https://doi.org/10.1215/00127094-2080132}{doi:10.1215/00127094-2080132}.

\bibitem{Borel}
A.~Borel,
\emph{Linear Algebraic Groups}, 2nd ed.,
Graduate Texts in Mathematics, vol.~126, Springer-Verlag, New York, 1991.

\bibitem{DonzelliDvorskyKaliman}
F.~Donzelli, A.~Dvorsky, and S.~Kaliman,
\emph{Algebraic density property of homogeneous spaces},
Transform. Groups \textbf{15} (2010), 551--576.
\href{https://doi.org/10.1007/s00031-010-9091-8}{doi:10.1007/s00031-010-9091-8}.

\bibitem{DvorskyRegular}
A.~Dvorsky,
\emph{Hamiltonian tameness and relative density on regular semisimple adjoint orbits},
manuscript (2026).

\bibitem{Forstneric}
F.~Forstneri\v c,
\emph{Stein Manifolds and Holomorphic Mappings: The Homotopy Principle in Complex Analysis},
2nd ed., Ergebnisse der Mathematik und ihrer Grenzgebiete, vol.~56,
Springer, Cham, 2017.
\href{https://doi.org/10.1007/978-3-319-61058-0}{doi:10.1007/978-3-319-61058-0}.

\bibitem{Humphreys}
J.~E. Humphreys,
\emph{Introduction to Lie Algebras and Representation Theory},
Graduate Texts in Mathematics, vol.~9, Springer-Verlag, New York, 1972.

\bibitem{KalimanKutzschebauch}
S.~Kaliman and F.~Kutzschebauch,
\emph{Algebraic (volume) density property for affine homogeneous spaces},
Math. Ann. \textbf{367} (2017), 1311--1332.
\href{https://doi.org/10.1007/s00208-016-1451-9}{doi:10.1007/s00208-016-1451-9}.

\bibitem{KnopRohrle}
F.~Knop and G.~R\"ohrle,
\emph{Spherical subgroups in simple algebraic groups},
Compos. Math. \textbf{151} (2015), 1288--1308.
\href{https://doi.org/10.1112/S0010437X1400791X}{doi:10.1112/S0010437X1400791X}.

\bibitem{Kraemer}
M.~Kr\"amer,
\emph{Sph\"arische Untergruppen in kompakten zusammenh\"angenden Liegruppen},
Compos. Math. \textbf{38} (1979), 129--153.

\bibitem{Losev}
I.~V. Losev,
\emph{Embeddings of homogeneous spaces into irreducible modules},
J. Algebra \textbf{322} (2009), 2621--2630.
\href{https://arxiv.org/abs/math/0606387}{arXiv:math/0606387}.

\bibitem{RosayRudin}
J.-P. Rosay and W.~Rudin,
\emph{Holomorphic maps from $\C^n$ to $\C^n$},
Trans. Amer. Math. Soc. \textbf{310} (1988), 47--86.
\href{https://doi.org/10.2307/2001110}{doi:10.2307/2001110}.

\bibitem{Stacks}
The Stacks Project Authors,
\emph{The Stacks Project}, Theorem~29.23.3, Tag~054K (Chevalley's theorem).
\url{https://stacks.math.columbia.edu/tag/054K}.

\bibitem{Varolin}
D.~Varolin,
\emph{The density property for complex manifolds and geometric structures},
J. Geom. Anal. \textbf{11} (2001), 135--160.
\href{https://doi.org/10.1007/BF02921959}{doi:10.1007/BF02921959}.

\bibitem{WinkelmannStein}
J.~Winkelmann,
\emph{Tame discrete subsets in Stein manifolds},
J. Aust. Math. Soc. \textbf{107} (2019), 110--132.
\href{https://doi.org/10.1017/S1446788718000241}{doi:10.1017/S1446788718000241}.

\bibitem{WinkelmannGroups}
J.~Winkelmann,
\emph{Tame discrete sets in algebraic groups},
Transform. Groups \textbf{26} (2021), 1487--1519.
\href{https://doi.org/10.1007/s00031-020-09608-x}{doi:10.1007/s00031-020-09608-x}.

\bibitem{WinkelmannRR}
J.~Winkelmann,
\emph{Rosay--Rudin-Spaces, Tame Sets and Danielewski Surfaces},
preprint (2023), \href{https://arxiv.org/abs/2309.07678}{arXiv:2309.07678}.

\end{thebibliography}
\end{document}